\numberwithin{equation}{section}
\newtheorem{theorem}{Theorem}[section]
\newtheorem{lemma}[theorem]{Lemma}
\newtheorem{proposition}[theorem]{Proposition}
\newtheorem{corollary}[theorem]{Corollary}
\theoremstyle{definition}
\newtheorem{definition}[theorem]{Definition}
\newtheorem{assumption}[theorem]{Assumption}
\theoremstyle{remark}
\newtheorem{remark}[theorem]{Remark}
\newcommand{\CC}{\mathbb{C}}
\newcommand{\RR}{\mathbb{R}}
\newcommand{\ZZ}{\mathbb{Z}}
\newcommand{\bfs}{\mathbf{s}}
\newcommand{\calD}{\mathcal{D}}
\newcommand{\calP}{\mathcal{P}}
\newcommand{\calR}{\mathcal{R}}
\newcommand{\frakt}{\mathfrak{t}}
\newcommand{\id}{\operatorname{id}}
\newcommand{\dom}{\operatorname{dom}}
\newcommand{\wid}{\operatorname{width}}
\newcommand{\double}{\mathsf{D}}
\newcommand{\D}{\mathrm{d}}
\newcommand{\scal}{{\rm scal}}
\newcommand{\rmc}{\mathrm{c}}
\newcommand{\rmS}{\mathrm{S}}
\newcommand{\rmT}{\mathrm{T}}
\newcommand{\upper}{\uppercase\expandafter}
\newcommand{\romnu}{\romannumeral}
\newcommand{\p}{\partial}
\newcommand{\pM}{{\p M}}
\newcommand{\supp}{\operatorname{supp}}
\newcommand{\coker}{\operatorname{coker}}
\newcommand{\ch}{\operatorname{ch}}
\newcommand{\Tch}{\operatorname{Tch}}
\newcommand{\End}{\operatorname{End}}
\newcommand{\rk}{\operatorname{rk}}
\newcommand{\dist}{\operatorname{dist}}
\newcommand{\cut}{{\rm cut}}
\newcommand{\Kcw}{K\mbox{-cw}_2}
\newcommand{\hAcw}{\hat{A}\mbox{-cw}_2}
\newcommand{\ocw}{\omega\mbox{-cw}_2}
\newcommand{\tr}{\operatorname{tr}}
\newcommand{\Tr}{\operatorname{Tr}}
\newcommand{\spf}{\operatorname{sf}}
\newcommand{\slaD}{\slashed{D}}
\newcommand{\slaS}{\slashed{S}}
\newcommand{\tilD}{\tilde D}
\newcommand{\tilE}{\tilde E}
\newcommand{\tilF}{\tilde F}
\newcommand{\tilS}{\tilde S}
\newcommand{\uD}{\underline{D}}
\newcommand{\uS}{\underline{S}}
\newcommand{\loc}{{\rm loc}}
\begin{document}

\normalsize

\title[Spectral flow, odd K-cowaist, and PSC]{Spectral flow of Callias operators, odd K-cowaist, and positive scalar curvature}

\author[Pengshuai Shi]{Pengshuai Shi${}^\ast$}
\address{School of Mathematics and Statistics, Beijing Institute of Technology, Beijing 100081, P. R. China}

\email{pengshuai.shi@gmail.com, shipengshuai@bit.edu.cn}

\subjclass[2020]{Primary 53C27; Secondary 53C21, 53C23, 57R20, 58J30, 58J32}

\keywords{Callias operator, spectral flow, K-cowaist, spin manifold, positive scalar curvature}

\thanks{${}^\ast$Partially supported by the NSFC (grant no. 12101042) and Beijing Institute of Technology Research Fund Program for Young Scholars.}

\begin{abstract}
On a complete Riemannian manifold $M$, we study the spectral flow of a family of Callias operators. We derive a codimension zero formula when the dimension of $M$ is odd and a codimension one formula when the dimension of $M$ is even. These can be seen as analogues of Gromov--Lawson's relative index theorem and classical Callias index theorem, respectively. Secondly, we introduce an intrinsic definition of K-cowaist on odd-dimensional manifolds, making use of the odd Chern character of a smooth map from the manifold to a unitary group. It behaves just like the usual K-cowaist on even-dimensional manifolds. We then apply the notion of odd K-cowaist and the tool of spectral flow to investigate problems related to positive scalar curvature on spin manifolds. In particular, we prove infinite odd K-cowaist to be an obstruction to the existence of PSC metrics. We obtain quantitative scalar curvature estimates on complete non-compact manifolds and scalar-mean curvature estimates on compact manifolds with boundary. They extend several previous results optimally, which unfolds a major advantage of our method via spectral flow and odd K-cowaist.
\end{abstract}

\maketitle


\section{Introduction}\label{S:intro}

Let $D$ be a Dirac operator on a closed Riemannian manifold $M$. Its Fredholm index (or index for short), which is the dimension of $\ker D$ minus the dimension of $\coker D$, is computed topologically by the Atiyah--Singer index theorem. This index is non-zero only when $\dim M$ is even. When $M$ is a complete \emph{non-compact} Riemannian manifold, $D$ can still be a Fredholm operator if it is assumed to be invertible at infinity. In this case, Gromov and Lawson in \cite{GromovLawson83} obtained a \emph{relative index theorem}. For two Dirac operators $D_1$ and $D_2$, if they coincide and are invertible at infinity, then the difference of the indices of $D_1$ and $D_2$ can be transferred to a topological index on a closed manifold derived from the compact part of $M$ where $D_1$ and $D_2$ differ. From this point of view, we have a codimension zero index formula for Dirac operators on an even-dimensional complete Riemannian manifold.

On an odd-dimensional complete non-compact Riemannian manifold $M$, there is another index formula for the so-called Callias operator. It is a Dirac-type operator with potential (see Definition~\ref{D:Callias}). The \emph{Callias index theorem} \cite{Callias78,Anghel93Callias,Bunke95} computes the index of a Callias operator as the index of the Dirac operator on a closed hypersurface of $M$ determined by the potential. This can be seen as a codimension one index formula.

While the index of a Dirac-type operator always vanishes on odd-dimensional closed manifolds, one can consider its counterpart, the spectral flow, in odd dimensions. The spectral flow is an integer valued quantity defined for a continuous family of Fredholm operators (see Subsection~\ref{SS:spf boundary}). As the first topic of this paper, we present codimension zero and codimension one spectral flow formulas for Callias operators on manifolds of opposite dimension parity compared to the above-mentioned index formulas.

\medskip

Let $M$ be a complete Riemannian manifold endowed with a Dirac bundle $S$, and let $\calD_\Phi:C^\infty(M,S)\to C^\infty(M,S)$ be a Callias operator constructed from a Dirac operator $D$ and a bundle endomorphism $\Phi\in\End(S)$ (called Callias potential). Let $\rho\in C^\infty(M,U(l))$ be a smooth function with values in the unitary group $U(l)$ which satisfies certain condition (Assumption~ \ref{A:rho rel bounded}). Then one can define the spectral flow of the family
\[
\calD_\Phi(r):=(1-r)\calD_\Phi+r\rho^{-1}\calD_\Phi\rho,\qquad r\in[0,1].
\]
We denote it by $\spf(\calD_\Phi,\rho)$.

In the odd-dimensional case, there is a way to construct a Callias operator using Gromov--Lawson pair (cf. \cite{Cecchini20LN,CeccZeid24GT}). We consider $S$ to be a twisted bundle $S=\uS\otimes(E\oplus F)$, where $\uS$ is a Dirac bundle and $(E,F)$ is a Gromov--Lawson pair (see Assumption~\ref{A:GL pair}). For a Callias operator $\calD_\Phi$, its \emph{Callias support} is a compact subset $M'\Subset M$ such that $\calD_\Phi^2$ is invertible outside $M'$. As in \cite{CeccZeid24GT}, we can form a vector bundle $V(E,F)$ over $\double M'$ (the double of $M'$) from a Gromov--Lawson pair $(E,F)$ so that it coincides with $E$ on one copy of $M'$ and with $F$ on the other copy. Let $\rho=\rho^+\oplus\rho^-\in C^\infty(M,U(l)\oplus U(l))$ such that $\rho^+=\rho^-$ is locally constant outside a Callias support $M'$ of $\calD_\Phi$. Then one can glue $\rho^+_{|M'}$ and $\rho^-_{|M'}$ to get $\tilde{\rho}\in C^\infty(\double M',U(l))$. Our codimension zero spectral flow formula now can be formulated as the following.

\begin{theorem}[cf. Theorem~\ref{T:codim 0 spf}]\label{IntroT:codim 0 spf}
Let $\calD_\Phi$ be a Callias operator on $S=\uS\otimes(E\oplus F)$ constructed from a Gromov--Lawson pair $(E,F)$ as above over an odd-dimensional complete Riemannian manifold $M$ without boundary. Suppose the Callias potential is a positive (or negative) operator outside a Callias support $M'$. Then for any $\rho=\rho^+\oplus\rho^-$ such that $\rho^+=\rho^-$ is locally constant outside $M'$, we have
\[
\spf(\calD_\Phi,\rho)=\spf(D_{V(E,F)},\tilde{\rho}),
\]
where $D_{V(E,F)}$ is the twisted Dirac operator acting on sections of $\uS\otimes V(E,F)$ over the closed manifold $\double M'$.
\end{theorem}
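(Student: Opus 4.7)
The plan is to adapt the Gromov--Lawson relative index strategy to the spectral-flow setting, using the Callias structure to localize the invariant to $M'$ and then identifying the local piece with a spectral flow on the closed double $\double M'$.

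The first step is a localization observation. Since $\rho^+=\rho^-$ is locally constant on $M\setminus M'$, the difference $\rho^{-1}\calD_\Phi\rho-\calD_\Phi=\rho^{-1}[\calD_\Phi,\rho]$ is a zeroth-order operator supported in $M'$. Hence $\calD_\Phi(r)\equiv\calD_\Phi$ on $M\setminus M'$ for every $r\in[0,1]$, and the assumption that $\Phi$ is (say) positive outside $M'$ forces a uniform spectral gap at $0$ for the entire family on the exterior. This is exactly the setting in which the spectral flow is well defined and insensitive to what happens outside $M'$.

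The second and main step is a cut-and-paste argument. I would cut $M$ along $\p M'$ and replace the exterior by a mirror copy of $M'$, producing the closed double $\tilM:=\double M'$. The Gromov--Lawson pair structure provides an identification of $E$ with $F$ over $\p M'$, so $E$ on the original copy and $F$ on the reversed copy glue to the bundle $V(E,F)\to\tilM$; similarly $\rho^+|_{M'}$ and $\rho^-|_{M'}$ assemble to $\tilde\rho\in C^\infty(\tilM,U(l))$ because they agree and are locally constant near $\p M'$. To compare the two spectral flows I would perform a two-stage deformation: (a) deform the Riemannian data and the Callias potential on $M\setminus M'$ to a cylindrical model $\p M'\times[0,\infty)$ with a constant positive potential, keeping a uniform gap at infinity throughout so that the spectral flow is preserved; (b) on the cylindrical model, use the fact that the positive potential forces the Callias operator to be a compact perturbation of an operator whose kernel is naturally identified with the APS-type problem on $M'$, and hence the spectral flow on the cylindrical side can be exchanged with the spectral flow on a mirror copy carrying the bundle $F$ in place of $E$ (this is where the sign flip of $\Phi$ corresponds to swapping the two factors of the Gromov--Lawson pair). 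After stage (b) the Callias potential is effectively gone, and what remains is the untwisted twisted Dirac operator on $\tilM$ acting on $\uS\otimes V(E,F)$, conjugated by $\tilde\rho$.

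The last step is routine bookkeeping: identifying the surviving operator on $\tilM$ with $D_{V(E,F)}$ and the path with the one defining $\spf(D_{V(E,F)},\tilde\rho)$, then reading off the claimed equality. The main obstacle is stage (b) of the cut-and-paste: one must verify that the cylindrical replacement and the subsequent doubling preserve the spectral flow, which is a Gromov--Lawson style relative statement at the level of families. I would attempt two possible routes. The first is a direct gluing lemma for spectral flow of Callias operators, controlled by the uniform gap provided by the positive potential; the second, probably cleaner, is the suspension trick, which realizes $\spf(\calD_\Phi,\rho)$ as the Fredholm index of a Callias-type operator on the even-dimensional manifold $M\times S^1$ twisted by the mapping torus bundle of $\rho$, and then invokes the Callias/Gromov--Lawson relative index theorem on $M\times S^1$ together with its double $\tilM\times S^1$. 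Either route reduces the proof to an even-dimensional relative index computation that the paper has already set up in its codimension-zero index framework.
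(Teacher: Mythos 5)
Your overall shape (localize everything to $M'$, then compare with the closed double) is the right one, but the step that carries all the content is exactly the one you leave open, and the mechanism you sketch for it would not go through as stated. The paper does not deform the exterior to a cylinder or invoke any APS-type comparison; it cuts $M$ along a hypersurface $\Sigma=\p M'$ and uses three precise ingredients: (1) a splitting theorem for spectral flow under the self-adjoint \emph{local} boundary condition defined by an admissible boundary involution $\bfs=\id$ (Theorem~\ref{T:splitting spf}); (2) a vanishing result for the exterior piece, since the restricted family has empty Callias support and $\bfs\Phi\ge0$ there, so every operator in the family is invertible (Proposition~\ref{P:spf vanishing}); and (3) homotopy invariance in the potential (Lemma~\ref{L:spf homotop inv}), which replaces $\spf(\calD'_{\Phi,\id},\rho')$ by $\spf(\calD'_{0,\id},\rho')$. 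The same splitting applied to $\double M'$, together with the vanishing of the $F\oplus F^{\rm op}$ contribution on the mirror copy and of $\spf(D_{\tilF},\tilde{\rho}^-)$ (killed via Getzler's heat-kernel formula and the symmetry of the double), identifies $\spf(\calD'_{0,\id},\rho')$ with $\spf(D_{V(E,F)},\tilde{\rho})$. Your proposal never specifies which boundary condition makes the cut pieces self-adjoint Fredholm families — that is the technical crux of any gluing lemma here — and it never accounts for the extra term on the double coming from the $F$-summand, which does not vanish for free but requires the symmetry argument.

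Concretely, your stage (b) is not an argument: on the half-infinite cylinder the zeroth-order modification is \emph{not} a compact perturbation (Rellich fails on a noncompact end; positivity of $\Phi$ gives invertibility of the exterior operator, not compactness of a difference), and the claim that "the spectral flow on the cylindrical side can be exchanged with the spectral flow on a mirror copy carrying $F$" is precisely the relative statement to be proved, not a consequence of anything you have set up. Your fallback suspension route (passing to $M\times \rmS^1$ and the even-dimensional Callias/relative index theorem) is a genuinely different strategy, but it also needs work you do not supply: since $\rho$ is only \emph{locally} constant outside $M'$ (possibly different constants on different components), the mapping-torus bundle is merely flat, not trivialized with trivial connection, at infinity, so one must verify the compatibility hypotheses of the relative index theorem there; the paper deliberately avoids this suspension for exactly this kind of reason (cf.\ Remark~\ref{R:area enlarge}). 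So as written the proof has a genuine gap at its central step, even though the intended endpoints of the computation are correct.
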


The way to compute a spectral flow on a complete non-compact manifold is to partition the manifold into a compact part and a non-compact part and use the splitting formula \cite[Theorem~2.10]{Shi24LN} for the spectral flow. In this setting, we consider a self-adjoint local boundary condition defined by an admissible boundary involution $\bfs\in\End(S_{|\pM})$ (see Definition~\ref{D:bdry involution}). For our case, the spectral flow on the non-compact part vanishes, so the question is converted to the spectral flow on a compact manifold with boundary, which corresponds to the following formula.

\begin{corollary}\label{IntroC:codim 0 spf}
Let $M$ be an odd-dimensional compact Riemannian manifold with boundary and $S=\uS\otimes(E\oplus F)$ be a twisted Dirac bundle over $M$ constructed from a Gromov--Lawson pair $(E,F)$. Suppose $\rho=\rho^+\oplus\rho^-\in C^\infty(M,U(l)\oplus U(l))$ satisfies that $\rho^+=\rho^-$ is locally constant near $\pM$. Then for any Callias potential $\Phi\in\End(S)$,
\[
\spf(\calD_{\Phi,\id},\rho)=\spf(D_{V(E,F)},\tilde{\rho}),
\]
where $\calD_{\Phi,\id}$ denotes the Callias operator with boundary condition defined by $\bfs=\id$, $D_{V(E,F)}$ is the twisted Dirac operator on $\double M$, and $\tilde{\rho}\in C^\infty(\double M,U(l))$ is the gluing of $\rho^+$ and $\rho^-$.
\end{corollary}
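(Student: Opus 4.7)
My plan is to deduce the corollary from Theorem~\ref{IntroT:codim 0 spf} by extending $M$ to a complete manifold without boundary and invoking the splitting formula \cite[Theorem~2.10]{Shi24LN}. First I would attach an infinite half-cylinder $\pM\times[0,\infty)$ to $M$ along $\pM$, equip it with a product metric, and denote the resulting complete Riemannian manifold without boundary by $\hat M$. The bundles $\uS$, $E$, $F$ and the Dirac structure extend over the end as pullbacks of their restrictions to $\pM$. Since $\rho^+=\rho^-$ is locally constant on a collar of $\pM$, it extends to a locally constant map $\hat\rho=\hat\rho^+\oplus\hat\rho^-$ on the cylindrical end. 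The Callias potential $\Phi$ is extended to a potential $\hat\Phi$ on $\hat M$ that is uniformly positive for large $t$, so that $\calD_{\hat\Phi}$ is a Callias operator on $\hat M$ whose Callias support is a small neighborhood of $M$.

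Theorem~\ref{IntroT:codim 0 spf} applied to $\calD_{\hat\Phi}$ on $\hat M$ then yields
\[
\spf(\calD_{\hat\Phi},\hat\rho)=\spf(D_{V(E,F)},\tilde\rho),
\]
with the right-hand side computed on the double of this Callias support. A standard homotopy, using that the Gromov--Lawson pair structure lives inside $M$ and that $V(E,F)$ is trivial on any added collar, identifies this with the desired spectral flow $\spf(D_{V(E,F)},\tilde\rho)$ on $\double M$.

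Finally I would apply the splitting formula along the hypersurface $\pM\subset\hat M$ to the interpolating family $\calD_{\hat\Phi}(r)$, using the boundary involution $\bfs=\id$ on the $M$-side. The $M$-side contribution is precisely $\spf(\calD_{\Phi,\id},\rho)$. On the cylindrical-end side, local constancy of $\hat\rho$ implies $\hat\rho^{-1}\calD_{\hat\Phi}\hat\rho=\calD_{\hat\Phi}$, so the family $\calD_{\hat\Phi}(r)=(1-r)\calD_{\hat\Phi}+r\hat\rho^{-1}\calD_{\hat\Phi}\hat\rho$ is independent of $r$ on the end, and its spectral flow vanishes. Combining the two sides yields the claimed identity.

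The main technical obstacle I anticipate is verifying that $\bfs=\id$ is an admissible boundary involution (Definition~\ref{D:bdry involution}) for every operator in the interpolating family, so that the splitting formula of \cite{Shi24LN} may be applied uniformly in $r\in[0,1]$; together with arranging $\hat\Phi$ so that the hypotheses of Theorem~\ref{IntroT:codim 0 spf} are satisfied on $\hat M$ without altering the $M$-side spectral flow (invoking homotopy invariance in a two-parameter rectangle if $\Phi$ must be adjusted near $\pM$). The remaining ingredients---homotopy invariance of spectral flow and the standard product structure on cylindrical ends---are routine.
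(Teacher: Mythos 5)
Your route is genuinely different from the paper's. The paper obtains this corollary by re-running the proof of Theorem~\ref{T:codim 0 spf} directly on the compact manifold: since $M$ is compact, Lemma~\ref{L:spf homotop inv} lets one replace $\Phi$ by $0$; one then compares $\spf(\calD_{0,\id},\rho)$ with the closed double $\double M$ carrying the operator twisted by $V(E,F)\oplus\tilF^{\rm op}$, using Theorem~\ref{T:splitting spf} to split the double along $\pM$, Proposition~\ref{P:spf vanishing}~(ii) to kill the reflected copy, and the symmetry/Getzler argument to kill the $\tilF$-summand. No noncompact extension is needed. You instead attach a cylindrical end, invoke Theorem~\ref{IntroT:codim 0 spf} as a black box on $\hat M$, and split back along the boundary; this is a legitimate alternative, at the cost of an extra identification of the double of the Callias support of $\calD_{\hat\Phi}$ with $\double M$, which you only gesture at ("standard homotopy") — it does work because the added piece is a product collar on which the bundle data are pulled back from $\pM$ and $\tilde\rho$ is locally constant, but it should be said via homotopy invariance of the spectral flow.

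Two points need tightening. First, Theorem~\ref{T:splitting spf} requires the cutting hypersurface to lie \emph{outside} a Callias support, but with your extension ("Callias support is a small neighborhood of $M$") the cut along $\pM$ lies inside it, since $\Phi$ is arbitrary (possibly $0$) near $\pM$; for the same reason the cylinder piece is not literally a Callias operator in the sense of Definition~\ref{D:Callias} (positivity must fail only on a compact subset of the interior). The fix is the one you hint at: choose $\hat\Phi$ so that $\hat\Phi^2-|[D,\hat\Phi]|$ is uniformly positive on the whole end \emph{and} on a collar inside $M$ (positivity failing only on a compact subset of $M^\circ$), split there, and then recover the original $\Phi$ on the compact side by Lemma~\ref{L:spf homotop inv}, which on compact $M$ permits arbitrary changes of the potential. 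Second, the technical obstacle you flag — admissibility of $\bfs=\id$ along the interpolating family — is a non-issue: Definition~\ref{D:bdry involution} only concerns the Clifford structure and is independent of $r$ and of $\Phi$, and the resulting boundary condition is self-adjoint elliptic for every member of the family exactly as in the paper; the real work is in the two items above, plus the (correct) observation that local constancy of $\hat\rho$ on the end makes the family constant there, so its spectral flow vanishes.
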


In the even-dimensional case, we consider the bundle $S=\uS\oplus\uS$ to be two copies of a Dirac bundle $\uS$ over $M$. Then we can get a Callias operator $\calD_\Phi$ acting on sections of $S$ where the potential $\Phi$ is constructed from a bundle endomorphism $\phi\in\End(\uS)$ (see Subsection~\ref{SS:codim 1}). In this setting, we have the following codimension one spectral flow formula.

\begin{theorem}[cf. Theorem~\ref{T:codim 1 spf}]\label{IntroT:codim 1 spf}
Let $\calD_\Phi$ be a Callias operator on $S=\uS\oplus\uS$ over an even-dimensional complete Riemannian manifold $M$ without boundary. Then for any $\rho\in C^\infty(M,U(l))$ satisfying Assumption~\ref{A:rho rel bounded}, we have
\[
\spf(\calD_\Phi,\rho)=\spf(D_{\Sigma+},\rho_{|\Sigma})=-\spf(D_{\Sigma-},\rho_{|\Sigma}),
\]
where $\Sigma$ is a closed hypersurface which is the boundary of a Callias support of $\calD_\Phi$, and $D_{\Sigma\pm}$ are the Dirac operators on $\Sigma$ acting on sections of the subbundles of $\uS_{|\Sigma}$ corresponding to positive/negative eigenspaces of $\phi_{|\Sigma}$.
\end{theorem}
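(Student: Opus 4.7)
The plan is to mirror the classical Callias index theorem at the level of spectral flow by localizing $\spf(\calD_\Phi,\rho)$ onto the hypersurface $\Sigma$. I would decompose $M = M'\cup_\Sigma(M\setminus M')$ with $\p M'=\Sigma$, apply the splitting formula for spectral flow to the family $\calD_\Phi(r)$, show that the non-compact exterior contributes zero, and then collapse the remaining compact contribution onto $\Sigma$ via a Witten-type deformation. For the splitting I would choose an admissible boundary involution $\bfs\in\End(S|_\Sigma)$ adapted to the Callias structure, the natural candidate being the self-adjoint involution determined by the eigenvalue decomposition $\uS|_\Sigma=\uS_+\oplus\uS_-$ induced by $\phi|_\Sigma$. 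The splitting formula \cite[Theorem~2.10]{Shi24LN} then gives
\[
\spf(\calD_\Phi,\rho) = \spf\bigl(\calD_{\Phi,M',\bfs},\rho|_{M'}\bigr) + \spf\bigl(\calD_{\Phi,M\setminus M',-\bfs},\rho|_{M\setminus M'}\bigr).
\]

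Outside $M'$ the potential $\Phi^2$ is uniformly positive by definition of Callias support, so a Weitzenböck-type inequality of the form $\calD_\Phi^2\geq \Phi^2-C$ makes $\calD_\Phi$ uniformly invertible on $M\setminus M'$. The twisted operator $\rho^{-1}\calD_\Phi\rho$ differs from $\calD_\Phi$ by the bounded zeroth-order commutator $\rho^{-1}[\calD_\Phi,\rho]$, whose $L^\infty$-norm is controlled by Assumption~\ref{A:rho rel bounded}. After possibly rescaling $\Phi$ off $M'$ to dominate this perturbation (a change that does not affect the spectral flow on the compact piece), the entire family $\calD_\Phi(r)$ on $M\setminus M'$ with boundary condition $-\bfs$ remains uniformly invertible, so its spectral flow vanishes.

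On the compact manifold $M'$ I would next deform $\Phi$ to $t\Phi$ in a collar neighborhood of $\Sigma$ and let $t\to\infty$. By a Witten-style adiabatic argument along the lines of \cite{Anghel93Callias,Bunke95,CeccZeid24GT}, the small eigenvalues of $\calD_{t\Phi,M',\bfs}(r)$ concentrate on $\Sigma$ within the subbundle $\uS_+$, and the effective limiting operator is precisely $D_{\Sigma+}$ twisted by $\rho|_\Sigma$. Homotopy invariance of the spectral flow combined with the previous two steps then yields $\spf(\calD_\Phi,\rho)=\spf(D_{\Sigma+},\rho|_\Sigma)$. Repeating the argument with $\bfs\mapsto -\bfs$ localizes the same quantity on the $-$ eigenbundle but with the opposite Clifford orientation of the transverse direction, producing $-\spf(D_{\Sigma-},\rho|_\Sigma)$; since both identifications must agree, this yields the two asserted equalities simultaneously.

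The main obstacle is the adiabatic localization in the third paragraph: proving rigorously that no spectral flow is lost in the limit $t\to\infty$, and identifying the limit operator on the concentrated subbundle as $D_{\Sigma+}$ with exactly the intrinsic Dirac structure inherited from $\uS|_\Sigma^+$. This requires a careful matching between the boundary involution $\bfs$, the spectral projection defined by $\phi|_\Sigma$, and the Clifford action of the inward normal, and is most naturally handled by combining the local analysis of Bunke and of Cecchini--Zeidler with the variational characterization of spectral flow used in the splitting formula of \cite{Shi24LN}.
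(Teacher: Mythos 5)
Your first two steps track the paper exactly: you split $M$ along $\Sigma$ with the admissible boundary involution given by the unitarization $\phi_0$ of $\phi_{|\Sigma}$ (Theorem~\ref{T:splitting spf}), and you kill the exterior contribution by uniform invertibility of the family there (Proposition~\ref{P:spf vanishing}\ref{IT:spf vanishing-1}); note that no rescaling of $\Phi$ at infinity is needed, since $\Phi$ commutes with $\rho$ and with Clifford multiplication, so $[D(r),\Phi]=[D,\Phi]$ and the whole family has the same Callias support — and such a rescaling would anyway change $\Phi$ outside a compact set, which Lemma~\ref{L:spf homotop inv} does not cover. The genuine gap is your third step. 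The adiabatic $t\to\infty$ localization is exactly the part you admit you cannot carry out, and the sources you point to (\cite{Anghel93Callias,Bunke95,CeccZeid24GT}) are Fredholm-index localization results for Callias operators, not statements about spectral flow of a family with local boundary conditions; nothing in them controls loss of spectral flow in the limit. The paper avoids this analysis entirely: on the compact piece $M'$, Lemma~\ref{L:spf homotop inv} makes the spectral flow independent of the Callias potential (any two potentials differ on a compact set), so one may take $\Phi=0$; the boundary condition $\chi=\bfs\rmc(\nu^*)\theta$ then reads $v_{|\Sigma'}=\underline{\rmc}(\nu^*)\phi_0 u_{|\Sigma'}$, which is precisely the Gorokhovsky--Lesch local boundary condition, and their Theorems~3.3, 3.3$'$ together with cobordism invariance (Corollary~3.5 of \cite{GorokhovskyLesch15}) give $\spf=\tfrac12(\spf(D_{\Sigma+},\rho_{|\Sigma})-\spf(D_{\Sigma-},\rho_{|\Sigma}))=\spf(D_{\Sigma+},\rho_{|\Sigma})=-\spf(D_{\Sigma-},\rho_{|\Sigma})$ with no Witten deformation at all.

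Your route to the second equality is also shaky as stated: if you rerun the splitting with $\bfs$ replaced by $-\bfs$, then on the exterior piece $\bfs\Phi$ is non-positive rather than non-negative, so the vanishing argument for $M\setminus M'$ breaks down, and you would have to modify $\Phi$ there (again outside the scope of Lemma~\ref{L:spf homotop inv}). In the paper the identity $\spf(D_{\Sigma+},\rho_{|\Sigma})=-\spf(D_{\Sigma-},\rho_{|\Sigma})$ is not obtained by a second splitting but is the cobordism invariance of spectral flow over $M'$, which comes packaged with the Gorokhovsky--Lesch theorem. So: right skeleton for the decomposition and the exterior, but the core identification on the compact piece is missing, and the cleaner fix is potential-independence plus the known boundary-value spectral flow theorem rather than an adiabatic limit.
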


The corresponding formula on manifolds with boundary is phrased for a compact Riemannian \emph{band}, which is a Riemannian manifold $M$ with a decomposition $\pM=\p_+M\sqcup\p_-M$, where $\p_\pm M$ are unions of connected components (cf. \cite{Gromov18metric,CeccZeid24GT}). In this case, the following holds.

\begin{corollary}\label{IntroC:codim 1 spf}
Let $M$ be an even-dimensional compact Riemannian band endowed with a Dirac bundle $S=\uS\oplus\uS$. Choose the admissible boundary involution $\bfs=\pm\id\in\End(S_{|\pM})$ on $\p_\pm M$. Then for any Callias potential $\Phi\in\End(S)$ and any $\rho\in C^\infty(M,U(l))$,
\[
\spf(\calD_{\Phi,\bfs},\rho)=\spf(D_{\p_+M},\rho_{|\p_+M})=-\spf(D_{\p_-M},\rho_{|\p_-M}).
\]
\end{corollary}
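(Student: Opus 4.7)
The strategy is to reduce the band statement to Theorem~\ref{IntroT:codim 1 spf} by attaching cylindrical ends and then invoking the splitting formula for spectral flow. I attach to $M$ the two cylindrical ends $[0,\infty)\times\p_+M$ and $[0,\infty)\times\p_-M$ to form a complete even-dimensional manifold $\tilM$, and extend the metric, the Dirac bundle $S=\uS\oplus\uS$, the endomorphism $\phi\in\End(\uS)$ underlying $\Phi$, and the map $\rho$ cylindrically (so $\tilde\rho$ is eventually constant and in particular satisfies Assumption~\ref{A:rho rel bounded}). The crucial choice is to extend $\phi$ so that outside a compact neighbourhood of $M$ it becomes a constant multiple of $\id_{\uS}$: strictly positive on the $\p_+M$-end and strictly negative on the $\p_-M$-end. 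This renders $\calD_{\tilde\Phi}(r)^2$ uniformly invertible on both ends throughout the linear family $r\in[0,1]$, so a slight enlargement of $M$ is a Callias support of $\tilde\calD_{\tilde\Phi}$, bounded by $\Sigma=\Sigma_+\sqcup\Sigma_-$ where $\Sigma_\pm$ is a cylindrical translate of $\p_\pm M$.

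Applying Theorem~\ref{IntroT:codim 1 spf} to $\tilde\calD_{\tilde\Phi}$ on $\tilM$ gives
\[
\spf(\tilde\calD_{\tilde\Phi},\tilde\rho)=\spf(D_{\Sigma_+},\tilde\rho_{|\Sigma_+})=-\spf(D_{\Sigma_-},\tilde\rho_{|\Sigma_-}).
\]
Because $\tilde\phi$ is positive (resp.\ negative) definite along $\Sigma_+$ (resp.\ $\Sigma_-$), the $\pm$-eigenbundle of $\tilde\phi_{|\Sigma_\pm}$ coincides with $\uS_{|\Sigma_\pm}$, so $D_{\Sigma_\pm}$ is the full twisted Dirac operator on $\uS_{|\Sigma_\pm}$. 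Since $\Sigma_\pm$ is cylindrically diffeomorphic to $\p_\pm M$ and $\tilde\rho$ is cylindrically constant near the ends, the right-hand side equals $\spf(D_{\p_\pm M},\rho_{|\p_\pm M})$.

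To finish, I apply the splitting formula \cite[Theorem~2.10]{Shi24LN} to the decomposition $\tilM=M\cup([0,\infty)\times\p_+M)\cup([0,\infty)\times\p_-M)$. It expresses $\spf(\tilde\calD_{\tilde\Phi},\tilde\rho)$ as a sum of three contributions, one per piece, coupled by admissible involutions along the separating hypersurfaces $\p_\pm M$. The uniform positivity (resp.\ negativity) of $\tilde\phi$ on the corresponding cylinder makes $\calD_{\tilde\Phi}(r)$ invertible there throughout the family, so the two cylindrical contributions vanish. A direct inspection of the Callias construction on $S=\uS\oplus\uS$ shows that the admissible boundary involution induced on $\p_\pm M$ by the sign of $\tilde\phi$ on the adjacent end is exactly $\bfs=\pm\id$, as prescribed in the corollary.

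The main obstacle I foresee is this last bookkeeping step: matching the sign of $\tilde\phi$ on each cylindrical end with the prescribed involution $\bfs=\pm\id$ in a sign-consistent manner. This requires a careful unwinding of the block structure of $\calD_{\tilde\Phi}$ on $S=\uS\oplus\uS$ and of Definition~\ref{D:bdry involution}. Once this identification is pinned down, the vanishing of the cylindrical spectral flows is a standard consequence of uniform invertibility, and Theorem~\ref{IntroT:codim 1 spf} combined with the splitting formula deliver the claimed chain of equalities.
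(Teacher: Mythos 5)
Your overall strategy (attach cylindrical ends, apply Theorem~\ref{T:codim 1 spf} on the resulting complete manifold, then split off the cylinders) is genuinely different from the paper's, which stays on the compact band: the paper first uses Lemma~\ref{L:spf homotop inv} to replace $\Phi$ by $\phi\oplus\phi$ with $\phi_{|\p_\pm M}=\pm\id$, observes that the prescribed $\bfs=\pm\id$ is then the unitarization of $\phi_{|\pM}$, and quotes the second half of the proof of Theorem~\ref{T:codim 1 spf} (the Gorokhovsky--Lesch local boundary condition and their theorems, plus cobordism invariance) directly -- no ends, no splitting. Your detour could in principle work, but as written it has a genuine gap.

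The gap is that the corollary is asserted for an \emph{arbitrary} Callias potential $\Phi$, while your argument needs $\phi$ to be definite, with sign $\pm$, on a neighbourhood of $\p_\pm M$, and you never arrange this. You extend $\phi$ cylindrically and make $\tilde\phi$ equal to $\pm c\,\id$ only \emph{outside a compact neighbourhood of} $M$; near the junctions $\p_\pm M$ the extended potential is just the (arbitrary, possibly vanishing or indefinite) boundary value of $\phi$. Consequently: (a) the hypersurfaces $\p_\pm M$ along which you cut need not lie outside a Callias support of $\tilde\calD_{\tilde\Phi}$, which is a hypothesis of Theorem~\ref{T:splitting spf}; (b) the cylinder pieces do not have empty Callias support, so your claim that $\calD_{\tilde\Phi}(r)$ is ``invertible there throughout the family'' is unjustified near $t=0$ and Proposition~\ref{P:spf vanishing}~\ref{IT:spf vanishing-1} does not apply -- this is precisely the step you defer as ``bookkeeping'', but it is where the content lies; moreover the involution on each cut is not ``induced'' by anything: you must \emph{choose} $\bfs$ (equal on the two copies) and then verify $\bfs\Phi\ge0$ at the cylinder boundaries, which again requires the sign normalization. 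The repair is exactly the paper's first move: invoke Lemma~\ref{L:spf homotop inv} on the compact band to replace $\Phi$ by $\phi\oplus\phi$ with $\phi\equiv\pm\delta\,\id$ on a collar of $\p_\pm M$, extend by $\pm\delta\,\id$ along the ends (and take a product metric there, smoothing only inside the ends). Then the junctions lie outside a Callias support, choosing $\bfs=\pm\id$ gives $\bfs\Phi=\delta\,\id\ge0$ at the cylinder boundaries, the cylinder contributions vanish by Proposition~\ref{P:spf vanishing}~\ref{IT:spf vanishing-1}, and Theorem~\ref{T:codim 1 spf} applied with $\Sigma_\pm$ translates of $\p_\pm M$ (where the eigenbundle of $\tilde\phi$ of the relevant sign is all of $\uS$) yields the claimed identities. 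With that normalization inserted, your route closes, though it is considerably longer than the paper's.
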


\begin{remark}\label{IntroR:spf for}
Corollaries~\ref{IntroC:codim 0 spf} and \ref{IntroC:codim 1 spf} are spectral flow analogues of \cite[Corollaries~3.9 and 3.10]{CeccZeid24GT}. Theorem~\ref{IntroT:codim 1 spf} is the spectral flow analogue of the classical Callias index theorem. Recently, van den Dungen \cite{Dungen25,Dungen24} developed a systematic study of the index theory of Callias operators in an abstract manner.
\end{remark}

\medskip

The second topic of this paper is to have a discussion of the notion of K-cowaist in odd dimensions. The K-cowaist of an oriented Riemannian manifold was first proposed and studied by Gromov in \cite{Gromov96} under the name of K-area. The \emph{K-cowaist} of a closed oriented Riemannian manifold $M$, denoted by $\Kcw(M)$, is defined to be $(\inf_E\|R^E\|)^{-1}$ with $E$ ranging over all Hermitian vector bundles over $M$ with Hermitian connections such that $E$ has a non-trivial Chern number (see Subsection~\ref{SS:K-cw even}). 
Clearly, this is a notion initially making sense only for even-dimensional manifolds. Although there have been several ways of generalizing K-cowaist to odd-dimensional manifolds in the literature, they are all based on the idea of transferring the space to even dimension by taking product with $\rmS^1$ or $\RR$. In this paper, we attempt to give a more intrinsic definition of K-cowaist on odd-dimensional manifolds.

Basically, on an odd-dimensional oriented Riemannian manifold $M$, we will consider a pair $(E;\rho)$, where $E\to M$ is a Hermitian vector bundle with a Hermitian connection and $\rho$ is a smooth map from $M$ to a unitary group $U(l)$. If the manifold is non-compact or with boundary, $(E;\rho)$ should satisfy certain compatibility condition at infinity or near the boundary (see Subsection~\ref{SS:K-cw odd}). Now instead of requiring $E$ to have a non-trivial Chern number, we require $(E;\rho)$ to have a non-trivial \emph{odd} Chern number, which is constructed by integrating over $M$ the wedge product of Chern character forms of $E$ and an odd Chern character form of $\rho$. Such pairs are called admissible pairs. Then we can define the K-cowaist as follows.

\begin{definition}\label{IntroD:K-cw odd}
Let $M$ be an odd-dimensional oriented Riemannian manifold. The \emph{K-cowaist} of $M$ is defined to be
\[
\Kcw(M):=\left(\inf\{\|R^E\|+\|R^\rho\|\}\right)^{-1}
\]
with $(E;\rho)$ ranging over all admissible pairs, where the curvature $R^\rho$ is defined in \eqref{E:curv rho}.
\end{definition}

Given a mixed differential form $\omega$ on $M$ of even degree, we can similarly define the $\omega$-cowaist of $M$ (cf. Definition~\ref{D:K-cw odd}), denoted by $\ocw(M)$. As a special case, we can talk about the $\hat{A}$-cowaist of an odd-dimensional manifold. The K-cowaist can be bounded by the $\omega$-cowaist.

\begin{theorem}\label{IntroT:K-cw omega-cw}
There exists a constant $c(n)$ depending only on the dimension $n$ of $M$ such that $\Kcw(M)\le c(n)\cdot\ocw(M)$.
\end{theorem}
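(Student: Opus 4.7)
The target inequality $\Kcw(M)\le c(n)\cdot\ocw(M)$ is equivalent, after inverting the two cowaists, to
\[
\inf_{(E;\rho)\ \omega\text{-admissible}}\bigl(\|R^E\|+\|R^\rho\|\bigr)\le c(n)\cdot\inf_{(E;\rho)\ K\text{-admissible}}\bigl(\|R^E\|+\|R^\rho\|\bigr),
\]
so my plan is to convert every $K$-admissible pair $(E;\rho)$ into an $\omega$-admissible pair $(\tilE;\rho)$ whose total curvature norm is bounded by $c(n)$ times that of $(E;\rho)$, and then pass to the infimum.

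Write $\omega=\omega_0+\omega_2+\cdots+\omega_{2m}$. In the generic case $\omega_0\neq 0$, the mixed class $[\omega]$ is invertible in the finite-dimensional graded ring $H^{\mathrm{even}}(M;\mathbb{R})$, with the formal inverse $[\omega]^{-1}:=\omega_0^{-1}\sum_{j\ge 0}\bigl(-\omega_0^{-1}(\omega-\omega_0)\bigr)^{j}$ being a finite truncated series. By the Chern character isomorphism on rational $K$-theory, there exist Hermitian vector bundles $L^\pm\to M$ and an integer $N\ge 1$ such that $\ch(L^+)-\ch(L^-)=N\,[\omega]^{-1}$ in cohomology; a standard classifying-map construction yields such $L^\pm$ equipped with connections whose curvature norms $\|R^{L^\pm}\|$ are bounded by a constant $C(n)$ depending only on $n$. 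Using $\ch(E\otimes L)=\ch(E)\wedge\ch(L)$ and Stokes' theorem on closed $M$, one gets
\[
\int_M\omega\wedge\ch(E\otimes L^+)\wedge\ch_{\mathrm{odd}}(\rho)-\int_M\omega\wedge\ch(E\otimes L^-)\wedge\ch_{\mathrm{odd}}(\rho)=N\int_M\ch(E)\wedge\ch_{\mathrm{odd}}(\rho)\neq 0,
\]
so at least one of $(E\otimes L^+;\rho)$ or $(E\otimes L^-;\rho)$ is $\omega$-admissible; take $\tilE$ to be the corresponding bundle. The degenerate case $\omega_0=0$ reduces to the generic one by a perturbation $\omega\leadsto\omega+\varepsilon$ and lower semicontinuity of the $\omega$-cowaist in $\omega$.

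For the curvature estimate, $\|R^{\tilE}\|\le\|R^E\|+\|R^{L^\pm}\|\le\|R^E\|+C(n)$. A rescaling argument --- replacing $E$ by a sufficiently high tensor power $E^{\otimes T}$, under which the relevant top-degree pairing grows polynomially in $T$ while the curvature grows at most linearly --- allows the additive $C(n)$ to be absorbed into a multiplicative factor $c(n)$, yielding $\|R^{\tilE}\|+\|R^\rho\|\le c(n)(\|R^E\|+\|R^\rho\|)$ as required.

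The main obstacle is the quantitative Chern character realization: producing $L^\pm$ with curvature norms bounded purely in terms of $n$. Since pullback curvatures a priori depend on the Lipschitz norm of the classifying map $M\to BU_r$ (hence on the geometry of $M$), a careful smoothing or energy-minimizing representative argument is needed to extract a bound that truly depends only on the dimension. The rescaling step also requires verifying that $K$-admissibility is preserved under high tensor powers of $E$, which follows from the leading-order behavior of $\ch(E^{\otimes T})$ as $T\to\infty$ together with $\rank(E)\ge 1$.
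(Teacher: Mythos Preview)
Your approach has two genuine gaps that cannot be repaired along the lines you suggest.

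First, the curvature bound $\|R^{L^\pm}\|\le C(n)$ is not available. The bundles $L^\pm$ are chosen so that $\ch(L^+)-\ch(L^-)=N[\omega]^{-1}$, and $[\omega]^{-1}$ depends on both the arbitrary form $\omega$ and on the cohomology of the particular manifold $M$. There is no mechanism---classifying maps, harmonic representatives, or otherwise---that produces a curvature bound depending only on $\dim M$: if $\omega_0$ is tiny relative to the higher-degree terms, or if $M$ has large volume/complicated geometry, the curvature of any bundle realizing $N[\omega]^{-1}$ can be forced arbitrarily large. You identify this as ``the main obstacle'' but the proposed fixes (smoothing, energy minimization) still yield bounds depending on $(M,g)$ and $\omega$, not on $n$ alone. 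The subsequent rescaling by tensor powers does not help either: replacing $E$ by $E^{\otimes T}$ multiplies $\|R^E\|$ by $T$, so $T\|R^E\|+C(n)+\|R^\rho\|\le c(n)(\|R^E\|+\|R^\rho\|)$ is impossible once $\|R^E\|+\|R^\rho\|$ is small, which is exactly the regime of interest when the K-cowaist is large.

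Second, you have misread the admissibility condition. A K-admissible pair satisfies $\int_M\ch_{\gamma_1}(E)\wedge\cdots\wedge\ch_{\gamma_{m-1}}(E)\wedge\ch_{\gamma_m}(\rho)\ne0$ for \emph{some} multi-index---this is a polynomial in the graded components of $\ch(E)$, not the single integral $\int_M\ch(E)\wedge\ch(\rho)$. Your displayed identity therefore does not connect to K-admissibility.

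The paper's proof avoids both issues by never touching $\omega^{-1}$. Instead it applies Adams operations $\psi_k$ to $E$ and to $\rho$: these are universal polynomial constructions (direct sums, tensor and exterior products) that satisfy $\ch_j(\psi_k(E))=k^j\ch_j(E)$ and multiply curvature by a factor depending only on $k$. Forming $P(k_1,\dots,k_m)=\int_M\omega\wedge(\ch(\psi_{k_1}(E)\otimes\cdots\otimes\psi_{k_{m-1}}(E))-\rk)\wedge\ch(\psi_{k_m}(\rho))$ gives a polynomial of degree $\le m$ whose top-order coefficients are exactly the odd Chern numbers in the K-admissibility condition; hence $P$ is not identically zero and is nonzero at some lattice point in $\{0,\dots,m\}^m$. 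Since the Adams parameters are bounded by $m$, the curvature inflation factor depends only on $n=2m-1$.
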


This relationship was established originally in even dimensions in \cite{Gromov96,Wang23K-cw} for the case that $\omega$ is the $\hat{A}$-genus form of $M$ and in \cite{BaerHanke24K-cw} for the general case. Note that the converse inequality does not hold (cf. \cite{Wang23K-cw}). A key ingredient in the proof of Theorem~\ref{IntroT:K-cw omega-cw} is the Adams operation on vector bundles. In order to prove the result for odd-dimensional manifolds, we need an analogous operation on elements of $C^\infty(M,U(l))$ (see Definition~\ref{D:Adams op odd}).

\medskip

The last part of this paper is concerned with applications of the first two parts to the study of positive scalar curvature (PSC for short) problems on \emph{spin} manifolds. In fact, one of the main motivations of introducing K-cowaist was to give an obstruction to the existence of PSC metrics on a Riemannian spin manifold.

\begin{theorem}[{\cite[Section~5]{Gromov96}}, \cite{BaerHanke24K-cw}]\label{IntroT:K-cw psc even}
Let $(M,g)$ be a $2m$-dimensional complete Riemannian spin manifold with (possibly empty) compact mean convex boundary. If the scalar curvature $\scal_g\ge\sigma^2\cdot2m(2m-1)$ for some $\sigma>0$, then $\hAcw(M)\le2\sigma^{-2}$, and therefore $\Kcw(M)\le C(m)\sigma^{-2}$ for some constant $C(m)$ depending only on $m$.

In particular, if $M$ has infinite K-cowaist, then it cannot admit a complete Riemannian metric of uniformly positive scalar curvature and non-negative mean curvature along the boundary.
\end{theorem}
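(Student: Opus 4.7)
The plan is to first establish the $\hAcw$ estimate by a Lichnerowicz-type argument on the twisted spin Dirac operator, and then transfer to the full K-cowaist bound by applying Theorem~\ref{IntroT:K-cw omega-cw} with $\omega=\hatA(TM)$.

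For the $\hatA$-cowaist step, given any Hermitian bundle $E\to M$ with connection realizing a non-trivial $\hatA$-Chern number (in the complete non-compact setting one takes a Gromov--Lawson pair $(E,F)$ with compactly supported curvature difference), I would form the twisted spin Dirac operator $\slaD_E$ on $\slaS\otimes E$, imposing the chiral (MIT-bag) boundary condition on $\pM$ where applicable. The index $\ind(\slaD_E)$ equals the $\hatA$-Chern number by Atiyah--Singer on closed manifolds, by an APS-type boundary index theorem in the compact-with-boundary case, and by Gromov--Lawson's relative index theorem in the non-compact case, and is hence non-zero by assumption. Picking $0\ne\psi\in\ker\slaD_E$ and applying the Lichnerowicz--Weitzenb\"ock identity $\slaD_E^2=\nabla^*\nabla+\tfrac{\scal_g}{4}+\calR^E$, then integrating against $\psi$ and using the chiral boundary condition, gives
\[
0=\int_M|\nabla\psi|^2\,dV+\int_M\Bigl(\tfrac{\scal_g}{4}|\psi|^2+\langle\calR^E\psi,\psi\rangle\Bigr)\,dV+\int_{\pM}\tfrac{H}{2}|\psi|^2\,dS,
\]
where $H$ is the mean curvature of $\pM$, which is $\ge 0$ by the mean-convexity hypothesis. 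The pointwise estimate $|\langle\calR^E\psi,\psi\rangle|\le m(2m-1)\|R^E\|\cdot|\psi|^2$, reflecting the $\binom{2m}{2}$ Clifford-weighted summands composing $\calR^E$, combined with $\scal_g\ge\sigma^2\cdot 2m(2m-1)$, forces $\|R^E\|\ge\sigma^2/2$ at some point of $M$, whence $\hAcw(M)\le 2\sigma^{-2}$ upon taking the infimum over admissible $E$.

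Theorem~\ref{IntroT:K-cw omega-cw} with $\omega=\hatA(TM)$ then gives $\Kcw(M)\le c(2m)\cdot\hAcw(M)\le C(m)\sigma^{-2}$; the ``in particular'' statement is the contrapositive. The main obstacle is the simultaneous presence of non-compact ends and a compact mean-convex boundary: one needs a self-adjoint Fredholm realization of $\slaD_E$ whose index still equals the $\hatA$-Chern number in this mixed setting. The hypothesis $\scal_g\ge\sigma^2\cdot 2m(2m-1)$ is precisely what makes $\slaD_E$ invertible at infinity (via Lichnerowicz applied to the untwisted part), securing the Fredholm property, while the chiral boundary condition renders the boundary contribution manifestly non-negative. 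Either a direct Gromov--Lawson cobordism/doubling argument or an adaptation of the Callias/spectral-flow machinery developed in the earlier sections of the paper can then be used to identify $\ind(\slaD_E)$ with the desired topological quantity.
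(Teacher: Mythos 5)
Your proposal is correct and follows essentially the same route the paper indicates for this cited theorem: the relative index theorem for twisted spin Dirac operators (equivalently a Callias-type construction from the Gromov--Lawson pair $(E,E_0)$), the integrated Lichnerowicz formula with mean convexity of $\pM$ making the boundary term non-negative, and then Theorem~\ref{IntroT:K-cw omega-cw} with $\omega=\hat{A}(M)$ to pass from the $\hat{A}$-cowaist bound to the K-cowaist bound. The only point to phrase more carefully is that the non-vanishing quantity is the \emph{relative} index $\int_M\hat{A}(M)(\ch(E)-\rk(E))$ rather than $\ind(\slaD_E)$ itself, so the harmonic spinor it produces may be twisted by $E$ or by the trivial bundle $E_0$; either case yields $\|R^E\|\ge\sigma^2/2$ (in the flat case an outright contradiction with uniform PSC), so the estimate and its contrapositive follow as you state.
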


This theorem is proved by using the relative index theorem of spin Dirac operators combined with Theorem~ \ref{IntroT:K-cw omega-cw}. Now that we have established the codimension zero spectral flow formula, together with our definition of odd K-cowaist, we are able to generalize Theorem~\ref{IntroT:K-cw psc even} to odd dimensions.

\begin{theorem}[cf. Theorem~\ref{T:K-cw psc odd}]\label{IntroT:K-cw psc odd}
Theorem~\ref{IntroT:K-cw psc even} holds for odd-dimensional Riemannian spin manifolds as well.
\end{theorem}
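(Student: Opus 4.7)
My plan is to argue by contradiction, mirroring the even-dimensional proof of Theorem~\ref{IntroT:K-cw psc even} with the Fredholm index of a twisted spin Dirac operator replaced by the spectral flow of a Callias operator family, and with the Gromov--Lawson relative index theorem replaced by the codimension zero spectral flow formula of Theorem~\ref{IntroT:codim 0 spf}. Suppose $(M,g)$ is a complete odd-dimensional Riemannian spin manifold of dimension $n$ with compact mean convex (possibly empty) boundary and $\scal_g\ge\sigma^2 n(n-1)$; to obtain $\hAcw(M)\le 2\sigma^{-2}$, I assume to the contrary that $(E;\rho)$ is an admissible pair with $\|R^E\|+\|R^\rho\|<\sigma^2/2$ and set out to derive a contradiction.

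First I arrange, by a standard deformation, that $E$ carries the trivial connection and $\rho\equiv I_l$ outside a compact set $M'\Subset M$ containing $\supp R^E$ and $\supp d\rho$, without changing the odd Chern number or increasing $\|R^E\|+\|R^\rho\|$. Letting $F$ be the trivial bundle of rank $l$ with trivial connection gives a Gromov--Lawson pair $(E,F)$ on $M$; on $S=\slaS\otimes(E\oplus F)$ I take a Callias potential $\Phi$ of the standard form $\chi\cdot(\id_E\oplus(-\id_F))$ for a cutoff $\chi$, making $M'$ a Callias support of $\calD_\Phi$. Setting $\rho^+:=\rho$ on $E$ and $\rho^-:=I_l$ on $F$, both of which are locally constant and coincide outside $M'$, Theorem~\ref{IntroT:codim 0 spf} yields
\[
\spf(\calD_\Phi,\rho)=\spf(D_{V(E,F)},\tilde\rho)=\int_{\double M'}\hat A(T\double M')\wedge\ch(V(E,F))\wedge\ch_{\mathrm{odd}}(\tilde\rho),
\]
the second equality being the Atiyah--Singer type formula for spectral flow on the closed spin manifold $\double M'$, where $V(E,F)$ restricts to $E$ on one copy of $M'$ and to $F$ on the other, and $\tilde\rho$ restricts to $\rho$ on the $E$-side and to $I_l$ on the $F$-side. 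Since $\ch_{\mathrm{odd}}(I_l)=0$, the $F$-side contributes nothing and the $E$-side equals the odd Chern number of $(E;\rho)$, which is non-zero by admissibility.

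For the analytic side I write $\calD_\Phi(r)=\calD_\Phi+r\rho^{-1}\rmc(d\rho)$ and expand $\calD_\Phi(r)^2=\calD_\Phi^2+r\{\calD_\Phi,\rho^{-1}\rmc(d\rho)\}+r^2(\rho^{-1}\rmc(d\rho))^2$. The Lichnerowicz formula identifies $\calD_\Phi^2$ as $\nabla^*\nabla+\scal_g/4+\rmc(R^E)+\rmc(R^F)$ plus a positive Callias correction, while the last two terms combine into Clifford multiplication by a curvature endomorphism pointwise bounded by a multiple of $\|R^\rho\|$. Since $R^F=0$, the result is a pointwise estimate
\[
\calD_\Phi(r)^2\ge\nabla^*\nabla+\frac{\scal_g}{4}-\frac{n(n-1)}{2}\bigl(\|R^E\|+\|R^\rho\|\bigr)
\]
on $M'$, to which the Callias term $\Phi^2$ adds a positive contribution that dominates outside $M'$ where $R^E=R^\rho=0$. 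The hypotheses $\scal_g/4\ge\sigma^2 n(n-1)/4>\frac{n(n-1)}{2}(\|R^E\|+\|R^\rho\|)$ on $M'$, together with the nonnegative boundary term produced by mean convexity of $\pM$ and the admissible boundary involution $\bfs=\id$ from Corollary~\ref{IntroC:codim 0 spf}, then force each $\calD_\Phi(r)$ to be invertible with a uniform positive lower bound on its spectrum. Hence $\spf(\calD_\Phi,\rho)=0$, contradicting the non-vanishing above.

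The main obstacle is the pointwise Lichnerowicz estimate for the family $\calD_\Phi(r)$: one must verify that $\{\calD_\Phi,\rho^{-1}\rmc(d\rho)\}+r(\rho^{-1}\rmc(d\rho))^2$ really produces a curvature endomorphism controlled by $\|R^\rho\|$, rather than by the larger quantity $\|d\rho\|^2$, so that the numerical constant $\sigma^2/2$ in the hypothesis matches the Clifford norm bound exactly. The boundary-term issue, i.e., extracting nonnegativity from mean convexity and the choice $\bfs=\id$, is the odd-dimensional analogue of the corresponding calculation in the proof of Theorem~\ref{IntroT:K-cw psc even} and should go through by the standard boundary Weitzenböck formula.
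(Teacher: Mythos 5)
Your overall strategy is the same as the paper's (contradiction between a non-vanishing spectral flow, computed via the codimension zero formula plus the cohomological formula on $\double M'$, and a Lichnerowicz-type vanishing estimate for the family $\calD_\Phi(r)$), but there is a genuine gap at the topological step. By taking $\rho^+=\rho$ on the $E$-summand and $\rho^-=I_l$ on the $F$-summand, the glued map $\tilde\rho$ is constant on the $F$-copy $M'^-$, so your spectral flow equals $\int_M\hat A(M)\,\ch(E)\,\ch(\rho)$. This is \emph{not} the quantity that admissibility controls: $\hat A$-admissibility of $(E;\rho)$ asserts $\int_M\hat A(M)\,(\ch(E)-\rk(E))\,\ch(\rho)\ne 0$, and the two differ by $\rk(E)\int_M\hat A(M)\ch(\rho)$, which need not vanish and can cancel the admissibility pairing; a fortiori it is not "the odd Chern number of $(E;\rho)$" in the sense of \eqref{E:admi pair} either. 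The point of the paper's choice $\rho^+=\rho^-=\rho$ (i.e.\ twisting \emph{both} summands by $\rho$) is exactly that the orientation-reversed $F$-copy, where $V(E,E_0)$ is trivial of rank $\rk(E)$, contributes $-\rk(E)\int_M\hat A(M)\ch(\rho)$, so the total spectral flow becomes $\int_M\hat A(M)(\ch(E)-\rk(E))\ch(\rho)\ne0$. With your choice the non-vanishing claim is unjustified and the contradiction argument collapses; replacing $\rho^-=I_l$ by $\rho^-=\rho$ (which also makes your preliminary deformation of $\rho$ to $I_l$ at infinity unnecessary) repairs it.

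Two smaller points. First, you must run the argument with $\hat A$-admissible pairs, proving $\hAcw(M)\le 2\sigma^{-2}$, and then deduce the K-cowaist bound from Theorem~\ref{IntroT:K-cw omega-cw}; starting from a plain admissible pair, as your opening sentence suggests, gives no control on the $\hat A$-paired integral. Second, in the Gromov--Lawson setup the involution $\theta$ is off-diagonal, so a potential of the form $\chi\cdot(\id_E\oplus(-\id_F))$ does not commute with $\theta$; a non-negative scalar cutoff function (equal to a small constant $\delta$ near $\pM$ and at infinity) is the correct choice, and it also feeds the boundary term $\bfs\Phi+\tfrac{n-1}{2}H\ge0$. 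The analytic worry you flag about the cross terms is resolvable exactly as in the paper: $D(r)$ is the Dirac operator of the connection family $\nabla_r=\D+r\rho^{-1}[\D,\rho]$ whose curvature is $-4r(1-r)R^\rho$ with $R^\rho=\tfrac14(\rho^{-1}\D\rho)^2$, so the Lichnerowicz term is bounded by $\tfrac{n(n-1)}{2}(\|R^E\|+\|R^\rho\|)$ and the constant $\sigma^2/2$ comes out exactly.
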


A typical class of manifolds with infinite K-cowaist is the even-dimensional finitely area enlargeable manifolds. 

\begin{definition}\label{IntroD:area enlarg}
An oriented Riemannian $n$-manifold $M$ is said to be \emph{area enlargeable} if given any Riemannian metric on $M$ and any $\varepsilon>0$, there exist a covering manifold $\widetilde{M}\to M$ (with the lifted metric) and a smooth map $f:\widetilde{M}\to\rmS^n(1)$ (the standard unit sphere) of non-zero degree which is locally constant at infinity and near the boundary such that $f$ is $\varepsilon$-area-contracting, which means that $|f^*\alpha|\le\varepsilon|\alpha|$ for any two form $\alpha$ on $\rmS^n(1)$.

If in addition, $\widetilde{M}\to M$ is a finite covering, we call $M$ a \emph{finitely} area enlargeable manifold.
\end{definition}

\begin{remark}\label{R:area enlarge}
The fact that a (not necessarily finitely) area enlargeable manifold without boundary does not admit a complete PSC metric has been proved by Gromov--Lawson \cite{GromovLawson83} using their relative index theorem. Compared with their original definition, here we relax the condition for $f$ to be \emph{locally} constant instead of constant at infinity and near the boundary. This will make a difference in the argument for odd-dimensional case, as it will not be easy to carry out the usual trick of taking product with $\rmS^1$ so to get an even-dimensional area enlargeable manifold.
\end{remark}

In Proposition~\ref{P:area enlarg K-cw odd}, we show that an odd-dimensional finitely area enlargeable manifold also has infinite (odd) K-cowaist. Therefore by Theorem~\ref{IntroT:K-cw psc odd}, it cannot admit a metric of uniformly PSC and non-negative mean curvature along the boundary.

Using similar ideas as proving Theorem~\ref{IntroT:K-cw psc odd}, we further deduce several results interacting odd K-cowaist or $\hat{A}$-cowaist with PSC problems. In the case of complete spin manifolds without boundary, we prove in Theorem~\ref{T:quant est codim 0} a codimension zero quantitative scalar curvature estimate, which in particular implies a quantitative Llarull's theorem on non-compact spin manifolds (Corollary~ \ref{C:quan est codim 0-contr}) obtained in \cite{Shi24LN}. In the case of compact spin manifolds with boundary, we prove a codimension zero scalar-mean curvature estimate in Theorem~\ref{T:scal-mean est codim 0} for the general long neck problem and in Theorem~\ref{T:scal-mean est codim 0-inf} for the problem about the width of geodesic collar neighborhood of the boundary.

Recall that the \emph{width} of a compact Riemannian band $(M,g)$ with $\pM=\p_+M\sqcup\p_-M$ is defined as
\[
\wid(M,g):=\dist_g(\p_+M,\p_-M).
\]
If $M$ is diffeomorphic to $N\times[-1,1]$ for some manifold $N$, then $M$ is a trivial band. In this case, $\p_\pm M$ are chosen to be the two copies of $N$, and we deduce a codimension one scalar-mean curvature estimate for the band width problem.

\begin{theorem}\label{IntroT:scal-mean est codim 1}
Let $(M,g)$ be an $n$-dimensional ($n\ge2$ even) Riemannian spin band which is diffeomorphic to $N\times[-1,1]$, where $N$ is a closed manifold of infinite K-cowaist. Suppose $\scal_g\ge\sigma^2n(n-1)$ on $M$ for some $\sigma>0$, and there exist $-\frac{\pi}{\sigma n}<l_-<l_+<\frac{\pi}{\sigma n}$ such that the mean curvature of $\pM$ satisfies
\[
H_{g|\p_\pm M}\ge\mp\tan\Big(\frac{1}{2}\sigma nl_\pm\Big).
\]
Then $\wid(M,g)\le l_+-l_-$.
\end{theorem}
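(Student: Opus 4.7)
The strategy is a proof by contradiction powered by Corollary~\ref{IntroC:codim 1 spf}. Assume toward contradiction that $\wid(M,g)>l_+-l_-$. Using the diffeomorphism $M\cong N\times[-1,1]$, I first pick a smooth function $\tau:M\to[l_-,l_+]$ which equals $l_\pm$ on a collar of $\p_\pm M$ and satisfies $|d\tau|_g\le1$ everywhere; the excess width $\wid(M,g)>l_+-l_-$ provides precisely the room to insert the two plateaus while keeping slope $\le1$. Set $f(t):=\tan(\tfrac12\sigma n\,t)$ on $(-\pi/(\sigma n),\pi/(\sigma n))$, which satisfies the Riccati identity $2f'=\sigma n(1+f^2)$ and $f(l_\pm)=\tan(\tfrac12\sigma n\,l_\pm)$.

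Since $N$ is odd-dimensional with infinite K-cowaist in the sense of Definition~\ref{IntroD:K-cw odd}, for any $\varepsilon>0$ we can fix an admissible pair $(E_0;\rho_0)$ on $N$ with non-trivial odd Chern number and $\|R^{E_0}\|+\|R^{\rho_0}\|<\varepsilon$. Let $\pi:M\to N$ be the projection; pull back to get a Hermitian bundle $E=\pi^*E_0$ and a map $\rho=\pi^*\rho_0\in C^\infty(M,U(l))$, whose product structure in the normal direction makes the compatibility required by Assumption~\ref{A:rho rel bounded} automatic. Let $\uS$ be the complex spinor bundle of $M$ twisted by $E$, set $S:=\uS\oplus\uS$, and form the Callias operator $\calD_{\Phi,\bfs}$ of Subsection~\ref{SS:codim 1} with potential built from $\phi:=f\circ\tau$ and self-adjoint boundary involution $\bfs=\pm\id$ on $\p_\pm M$. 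By Corollary~\ref{IntroC:codim 1 spf},
\[
\spf(\calD_{\Phi,\bfs},\rho)=\spf(D_{\p_+M},\rho|_{\p_+M})=\spf(D_N,\rho_0),
\]
and the right-hand side is non-zero, as it pairs the spin Dirac class of $N$ against the admissible pair realizing the non-trivial odd Chern number of $(E_0;\rho_0)$. Hence there exists some $r_0\in[0,1]$ for which $\calD_{\Phi,\bfs}(r_0):=(1-r_0)\calD_{\Phi,\bfs}+r_0\,\rho^{-1}\calD_{\Phi,\bfs}\rho$ has a non-trivial kernel.

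The contradiction comes from a Weitzenb\"ock estimate. Because $\Phi$ anticommutes with the underlying twisted Dirac operator by the construction in Subsection~\ref{SS:codim 1}, and $d\phi=f'(\tau)\,d\tau$ lies in the ``time'' direction, the squared Callias operator admits an interior bound
\[
\calD_{\Phi,\bfs}^2(r_0)\;\ge\;\n^*\n+\tfrac14\scal_g+\phi^2+f'(\tau)|d\tau|-C\bigl(\|R^E\|+\|R^\rho\|\bigr)
\]
together with a boundary integrand $(H_g+\bfs\,\phi)|_{\pM}$ dictated by the involution $\bfs$. The interior term is strictly positive for $\varepsilon$ small because $\scal_g\ge\sigma^2n(n-1)$ and, using the Riccati ODE, $\tfrac14\sigma^2n(n-1)+\phi^2+\tfrac{\sigma n}{2}(1+\phi^2)>0$; the boundary term is non-negative because $\phi|_{\p_\pm M}=\tan(\tfrac12\sigma n\,l_\pm)$ combined with $\bfs=\pm\id$ produces $H_{g|\p_\pm M}\pm\tan(\tfrac12\sigma n\,l_\pm)\ge0$, which is exactly the assumed $H_{g|\p_\pm M}\ge\mp\tan(\tfrac12\sigma n\,l_\pm)$. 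Thus $\calD_{\Phi,\bfs}(r_0)$ is injective, contradicting the non-trivial kernel. The main obstacle is to align the signs in the Weitzenb\"ock identity with the choice of $\bfs$ and with the sign convention for $f$, so that the boundary integrand emerges as $H_g\pm\phi$ on $\p_\pm M$ and matches the stated mean-curvature bound precisely; a secondary technical point is to realize the non-triviality of $\spf(D_N,\rho_0)$ from the odd K-cowaist data via the pairing used in the definition of odd Chern number.
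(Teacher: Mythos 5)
Your skeleton is the paper's: pull back bundle/map data from $N$, form the Callias operator on $S=\uS\oplus\uS$ with $\bfs=\pm\id$ on $\p_\pm M$, compute $\spf(\calD_{\Phi,\bfs},\rho)$ via Corollary~\ref{IntroC:codim 1 spf} as a spectral flow on $N$, and derive a contradiction by choosing a potential built from a $1$-Lipschitz band function that forces invertibility of the whole family. But two steps are genuinely gapped. First, the non-vanishing of the spectral flow on $N$ is not justified from your data. By Getzler's cohomological formula, $\spf(\slaD_{E_N},\rho_N)=\int_N\hat A(N)\ch(E_N)\ch(\rho_N)$, so what you need is an \emph{$\hat A$-admissible} pair with small curvature, i.e.\ infinite $\hat A$-cowaist of $N$. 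A pair that is merely admissible in the sense of \eqref{E:admi pair} gives non-vanishing of some product of Chern character forms of $E_N$ with the odd Chern character of $\rho_N$; this does not imply the particular pairing against $\hat A(N)$ is non-zero (lower-degree terms of $\hat A$ can cancel it). The paper closes exactly this gap by first invoking Theorem~\ref{T:K-cw omega-cw odd} (odd Adams operations) to pass from infinite K-cowaist to infinite $\hat A$-cowaist; you relegate this to a ``secondary technical point'' and never carry it out, yet it is precisely where the hypothesis on $N$ is consumed.

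Second, your Weitzenb\"ock estimate is wrong in the way that matters. The cross term in $\calD_\Phi^2$ is $[D,\Phi]\theta=\rmc(\D\phi)\theta$, which enters the lower bound as $\Phi^2-|\D\Phi|$, i.e.\ with a \emph{minus} sign; your displayed bound has $+f'(\tau)|d\tau|$, which makes your positivity check vacuous and hides the actual difficulty. With the correct sign and your unscaled $f(t)=\tan(\tfrac12\sigma nt)$, the interior term $\tfrac{n}{n-1}\cdot\tfrac14\scal_g+\phi^2-|\D\phi|$ is bounded below only by $\bigl(\tfrac{\sigma n}{2}-1\bigr)\bigl(\tfrac{\sigma n}{2}-\tan^2(\tfrac12\sigma n\tau)\bigr)$, which is negative in general; moreover the boundary integrand in \eqref{E:Callias square int-2} is $\bfs\Phi+\tfrac12 nH_g$, not $H_g+\bfs\phi$, so your boundary values $\phi(l_\pm)=\tan(\tfrac12\sigma nl_\pm)$ do not match the mean-curvature hypothesis with the correct coefficient. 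The potential must be normalized as in Theorem~\ref{T:quant est codim 0} and \cite{CeccZeid24GT}: take $\phi=\tfrac{\sigma n}{2}\tan\bigl(\tfrac12\sigma n\,\tau\bigr)$, so the Riccati identity gives $\phi^2-|\D\phi|\ge-\tfrac{\sigma^2n^2}{4}$, exactly cancelled by the Penrose-improved curvature term $\tfrac{n}{n-1}\cdot\tfrac14\scal_g\ge\tfrac{\sigma^2n^2}{4}$, while the boundary term becomes $\tfrac n2\bigl(H_g\pm\tfrac{2}{n}\phi(l_\pm)\bigr)\ge0$ on $\p_\pm M$, which is what the stated mean-curvature bound is designed to guarantee (compare the normalization in Theorems~\ref{T:scal-mean est codim 0} and \ref{T:scal-mean est codim 0-inf}). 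As written, neither the interior nor the boundary positivity in your argument actually holds.
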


\begin{remark}\label{IntroR:scal-mean est codim 1}
Theorem~\ref{IntroT:scal-mean est codim 1} generalizes the scalar-mean curvature estimate for the band width problem in \cite{CeccZeid24GT} to the case of odd K-cowaist. It applies to examples such as torical bands $\rmT^{n-1}\times[-1,1]$, overtorical bands (cf. \cite{Gromov18metric}) and more generally $\hat{A}$-overtorical bands (cf. \cite{Zeidler20}). Theorem~\ref{IntroT:scal-mean est codim 1} also implies a result (Corollary~\ref{C:non-exist NxR}) related to the ``$\times\RR$ stability'' conjecture of Rosenberg and Stolz.
\end{remark}

Theorem~\ref{IntroT:scal-mean est codim 1}, together with the other quantitative results in Section~ \ref{S:odd K-cw psc}, mostly extends the previous results to the other dimension parity. However, note that the parameters in the hypothesis and conclusion of these theorems depend on the dimension $n$ of the manifold. The usual way of taking product with $\rmS^1$ and converting to the other dimension parity will not lead to quantitatively optimal estimates, as was pointed out in \cite[Remark~4.20]{CeccZeid23}. Our new method involving spectral flow and odd K-cowaist overcomes this issue and sets up the results in the optimal form. We believe this method can potentially be applied to more situations.

As a general remark, note that both infinite K-cowaist and non-vanishing Rosenberg index are obstructions to the existence of PSC metrics on a spin manifold. It may be a question worth considering whether the latter can be inferred by the former (see Remark~\ref{R:band width & non-exist NxR}).

\subsection*{Organization of this paper}\label{SS:orga}

The paper is organized as follows. In Section~\ref{S:spf Callias}, we study the spectral flow for a family of Callias operators and prove the codimension zero and codimension one formulas. In Section~\ref{S:K-cw odd}, we first recall the notion of K-cowaist in even dimensions, then give an intrinsic definition for odd-dimensional manifolds. In Section~\ref{S:odd K-cw psc}, we use the tool of spectral flow and the notion of odd K-cowaist to prove the aforementioned results related to PSC problems.

\subsection*{Acknowledgments}\label{SS:acknow}

The author wishes to thank the referee for helpful comments and suggestions which improved the presentation of the paper.

\section{Spectral flow formulas for Callias operators}\label{S:spf Callias}

\subsection{Callias operators and boundary chirality}\label{SS:Callias}

A Callias operator is roughly a Dirac-type operator whose square is a Schr\"odinger operator such that the potential has a positive lower bound at infinity. Let $M$ be an $n$-dimensional complete Riemannian manifold (possibly with compact boundary), and $S\to M$ be a Hermitian vector bundle with a Hermitian connection $\nabla$. $S$ is called a \emph{Dirac bundle} if it is endowed with a Clifford multiplication $\rmc(\cdot):T^*M\to\End(S)$ which is skew-adjoint and satisfies $\rmc(\cdot)^2=-|\cdot|^2$, and is compatible with $\nabla$ (i.e. $\rmc(\cdot)$ is a parallel bundle map, cf. \cite[\S II.5]{LawMic89}). The \emph{Dirac operator} on a Dirac bundle $S$ is a formally self-adjoint first-order elliptic differential operator defined by
\[
D:=\sum_{i=1}^{n}\rmc(e_i^*)\nabla_{e_i}:\ C^\infty(M,S)\to C^\infty(M,S),
\]
where $e_1,\dots,e_n$ is an orthonormal local tangent frame and $e_1^*,\dots,e_n^*$ is the associated dual cotangent frame. 

Let $S_{|\pM}$ be the restriction of $S$ to the boundary. Then $S_{|\pM}$ is also a Dirac bundle by setting
\[
\begin{aligned}
\rmc^\p(\xi)&:=\rmc(\nu^*)^{-1}\rmc(\xi), \\
\nabla^\p&:=\nabla+\frac{1}{2}\rmc^\p(\nabla\nu^*),
\end{aligned}
\]
where $\xi\in T^*\p M$ and $\nu^*$ is the dual covector of the inward pointing unit normal vector field $\nu=e_n$ along $\p M$. The corresponding Dirac operator
\[
A:=\sum_{i=1}^{n-1}\rmc^\p(e_i^*)\nabla^\p_{e_i}:\ C^\infty(\pM,S_{|\pM})\to C^\infty(\pM,S_{|\pM})
\]
anti-commutes with $\rmc(\nu^*)$ and is called a \emph{compatible adapted operator} to $D$.  Also, we have
\begin{equation}\label{E:bdry Dirac mean}
A=\rmc(\nu^*)^{-1}D-\nabla_\nu+\frac{n-1}{2}H,
\end{equation}
where $H$ is the mean curvature along $\pM$ with respect to $\nu$, cf. \cite[Appendix~1]{BaerBallmann16}.

A convenient way to talk about a Callias operator is using the notion of relative Dirac bundle of Cecchini--Zeidler \cite{CeccZeid24GT}.

\begin{definition}[{\cite[Definition~2.2]{CeccZeid24GT}}]\label{D:rel Dirac}
Let $K\Subset M^\circ$ be a compact subset in the interior of $M$. A Dirac bundle $S\to M$ is said to be a \emph{relative Dirac bundle} with support $K$ if there is a self-adjoint, parallel bundle involution $\theta\in C^\infty(M\setminus K,\End(S))$ such that $\rmc(\xi)\theta=-\theta\rmc(\xi)$ for any $\xi\in T^*M_{|M\setminus K}$ and $\theta$ admits a smooth extension to a bundle endomorphism on an open neighborhood of $\overline{M\setminus K}$.
\end{definition}

Let $(S,\theta)$ be a relative Dirac bundle with support $K\Subset M^\circ$. Consider a self-adjoint bundle endomorphism $\Phi\in\End(S)$ which commutes with $\theta$ such that $\Phi=0$ on $K$. Extending $\Phi\theta$ by zero on $K$, one can construct a formally self-adjoint Dirac-type operator
\begin{equation}\label{E:Callias}
\calD_\Phi:=D+\Phi\theta,
\end{equation}
where $D$ is the Dirac operator on $S$.

\begin{definition}\label{D:Callias}
We call the operator $\calD_\Phi=D+\Phi\theta$ a \emph{Callias operator} if the commutator $[D,\Phi]$ is a bundle endomorphism\footnote{This means that $\Phi$ commutes with Clifford multiplication.} 
and that $\Phi^2-|[D,\Phi]|$ is a uniformly positive bundle endomorphism outside a compact subset $K'\Subset M^\circ$, where $|[D,\Phi]|(p)$ denotes the operator norm of $[D,\Phi](p):S_p\to S_p$ for any $p\in M$.

In this case, $\Phi$ is called a \emph{Callias potential} and $K'$ is called a \emph{Callias support}.
\end{definition}

\begin{remark}\label{R:Callias potential}
Note that a Callias support  always contains the support of the relative Dirac bundle. Our Callias potential $\Phi$ is usually assumed to be smooth. But as in \cite{CeccZeid24GT,Shi24LN}, we also allow potentials which are Lipschitz continuous on a compact subset. See Remark~\ref{R:Lip potential}.
\end{remark}

From this definition, a Callias operator $\calD_\Phi$ is formally self-adjoint, and
\begin{equation}\label{E:Callias square}
\calD_\Phi^2=D^2+\Phi^2+[D,\Phi]\theta,
\end{equation}
which means that it is invertible (or coercive) at infinity.

Suppose $\pM$ is non-empty. We focus on a special kind of local boundary conditions for a Callias operator as follows.

\begin{definition}\label{D:bdry involution}
A bundle endomorphism $\bfs\in\End(S_{|\pM})$ is said to be an \emph{admissible boundary involution} if it is a self-adjoint bundle involution which commutes with the Clifford multiplication. The \emph{boundary chirality} associated to $\bfs$ is defined to be
\[
\chi:=\bfs\rmc(\nu^*)\theta:S_{|\pM}\to S_{|\pM}.
\]
\end{definition}

Note that $\chi$ is also a self-adjoint involution. Thus it induces an orthogonal decomposition $S_{|\pM}=S^+_\pM\oplus S^-_\pM$, where $S^\pm_\pM$ are the $\pm1$-eigenspaces of $\chi$. Since $\chi$ anti-commutes with $\rmc(\nu^*)$ while commutes with $\rmc(\xi)$ for $\xi\in T^*\p M$, it anti-commutes with $\rmc(\nu^*)^{-1}\rmc(\xi)$. This means that $\rmc(\nu^*)$ interchanges $S^+_\pM$ and $S^-_\pM$, and that $\chi$ anti-commutes with $A$ (a compatible adapted operator to $D$). By \cite[Example~4.20]{BaerBallmann16}, $\chi$ induces a self-adjoint elliptic local boundary condition for $\calD_\Phi$ with the domain given by
\[
\dom(\calD_{\Phi,\bfs}):=\left\{u\in H^1_{\calD_\Phi}(M,S)\;|\;\chi(u_{|\p M})=u_{|\p M}\right\},
\]
where
\[
H^1_{\calD_\Phi}(M,S):=\{u\in H^1_\loc(M,S)\cap L^2(M,S)\;|\;\calD_\Phi u\in L^2(M,S)\}.
\]
Therefore, $\calD_{\Phi,\bfs}$ is a self-adjoint Fredholm operator.

\subsection{Spectral flow on manifolds with boundary}\label{SS:spf boundary}

In this subsection, we study the spectral flow of a family of Callias operators on manifolds with boundary and give a splitting formula for the spectral flow. This will be used to derive more concrete formulas in later subsections.

Let $M$ be a complete Riemannian manifold with compact boundary and $\calD_\Phi=D+\Phi\theta$ be a Callias operator on $M$. Impose to $\calD_\Phi$ the local boundary condition determined by an admissible boundary involution $\bfs\in\End(S_{|\pM})$ as in last subsection. Let $\rho\in C^\infty(M,U(l))$ be a smooth function on $M$ with values in the unitary group $U(l)$, which satisfies the following.

\begin{assumption}\label{A:rho rel bounded}
The commutator $[D,\rho]=\rmc(\D\rho)$ is bounded as an operator from $\dom(\calD_{\Phi,\bfs})$ to $L^2(M,S)$.
\end{assumption}

Then $\rho$ preserves $\dom(\calD_{\Phi,\bfs})$. Note that $\rho$ commutes with both $\Phi$ and $\theta$. For $r\in[0,1]$, define a family of operators\footnote{Here we identify $\calD_\Phi$ and $\rho$ with their natural extensions acting on the bundle $S\otimes\CC^l\to M$. See Remark~\ref{R:family conn}.}
\begin{equation}\label{E:Callias family}
\calD_\Phi(r):=(1-r)\calD_\Phi+r\rho^{-1}\calD_\Phi\rho=D(r)+\Phi\theta
\end{equation}
on the domain $\dom(\calD_{\Phi,\bfs})$, where $D(r)=(1-r)D+r\rho^{-1}D\rho=D+r\rho^{-1}[D,\rho]$. It can be checked that $\calD_\Phi(r)$ is a family of Callias operators whose Callias support is independent of $r$. Moreover, under Assumption~\ref{A:rho rel bounded}, $\calD_{\Phi,\bfs}(r)$ is a continuous family of self-adjoint Fredholm operators in the Riesz topology (cf. \cite{BoossLeschPhillips05}, \cite[Proposition~ 2.2]{Lesch05sf}). So one can consider the \emph{spectral flow} of this family, denoted by $\spf(\calD_{\Phi,\bfs},\rho)$, which is the net number of eigenvalues of $\calD_{\Phi,\bfs}(r)$ that change from negative to non-negative as $r$ varies from 0 to 1. (In particular, we adopt the convention of \cite{GorokhovskyLesch15} for the degenerate case where endpoint values may not be invertible.)

\begin{remark}\label{R:family conn}
It is worth pointing out that $D(r)$ can be regarded as a Dirac operator on a new Dirac bundle with varying connections. Precisely, consider the twisted bundle $S\otimes\CC^l$. It is again a Dirac bundle with the Clifford multiplication acting as identity on $\CC^l$ and with a family of connections given by
\[
\nabla^{S\otimes\CC^l}(r):=\nabla^S\otimes\id+\id\otimes(\D+r\rho^{-1}[\D,\rho]),\quad r\in[0,1].
\]
The corresponding Dirac operator is just $D(r)=(1-r)D+r\rho^{-1}D\rho$, where $D$ is the Dirac operator on $S$ (identified with $D(0)$). The maps $\theta$ and $\Phi$ can also be extended to acting on $S\otimes\CC^l$. But for simplicity, usually we will not distinguish between $S$ and $S\otimes\CC^l$.
\end{remark}

Like Fredholm index, the spectral flow is also stable up to compact perturbations

\begin{lemma}\label{L:spf homotop inv}
Let $\Phi_1,\Phi_2$ be two Callias potentials on $M$ which coincide outside a compact subset. Then for any $\rho\in C^\infty(M,U(l))$ satisfying Assumption~\ref{A:rho rel bounded} and any admissible boundary involution $\bfs\in\End(S_{|\pM})$,
\[
\spf(\calD_{\Phi_1,\bfs},\rho)=\spf(\calD_{\Phi_2,\bfs},\rho).
\]
\end{lemma}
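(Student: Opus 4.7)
The natural strategy is a 2-parameter homotopy argument. Let $\Phi_t := (1-t)\Phi_1 + tr\Phi_2$ for $t\in[0,1]$. Since $\Phi_1 = \Phi_2$ outside a compact set $K$, we have $\Phi_t = \Phi_1 = \Phi_2$ there, so every $\Phi_t$ commutes with $\theta$, vanishes on the support of the relative Dirac bundle, and satisfies the Callias condition of Definition~\ref{D:Callias} with a common Callias support (enlarging $K$ if needed). Consequently $\calD_{\Phi_t, \bfs}$ is a self-adjoint Fredholm operator for every $t$, with the boundary condition unaffected by the choice of potential. Define the 2-parameter family
\[
F(t,r) := \calD_{\Phi_t}(r) = (1-r)\calD_{\Phi_t} + r\rho^{-1}\calD_{\Phi_t}\rho, \qquad (t,r)\in[0,1]\times[0,1],
\]
all with common domain $\dom(\calD_{\Phi_t, \bfs}) = \dom(\calD_{\Phi_1, \bfs})$ (the domain depends only on $\bfs$ and the underlying Dirac part, not on $\Phi$). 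Here $\rho$ is extended to act on $S\otimes\CC^l$ as in Remark~\ref{R:family conn}.

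Next I would check that $F$ is continuous on $[0,1]^2$ in the Riesz (gap) topology on self-adjoint Fredholm operators. Continuity in $r$ for fixed $t$ is exactly the content recalled before Lemma~\ref{L:spf homotop inv}, under Assumption~\ref{A:rho rel bounded}. Continuity in $t$ is even simpler: $F(t,r) - F(t',r) = (\Phi_t - \Phi_{t'})\theta$ is a uniformly bounded endomorphism supported in $K$ (and, after conjugating by $\rho$, remains of the form $(\Phi_t - \Phi_{t'})\theta$ since $\rho$ commutes with $\Phi$ and $\theta$), so the difference is a bounded operator whose norm goes to $0$ as $t'\to t$. This gives norm-continuity of $F$, hence Riesz continuity.

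Now I apply homotopy invariance of spectral flow around the boundary of the square $[0,1]^2$, oriented counterclockwise. The four edges contribute
\begin{align*}
\text{bottom } (t,0),\, t:0\to 1:&\quad \spf(\calD_{\Phi_t,\bfs})_{t\in[0,1]},\\
\text{right } (1,r),\, r:0\to 1:&\quad \spf(\calD_{\Phi_2,\bfs},\rho),\\
\text{top } (t,1),\, t:1\to 0:&\quad -\spf(\rho^{-1}\calD_{\Phi_t,\bfs}\rho)_{t\in[0,1]},\\
\text{left } (0,r),\, r:1\to 0:&\quad -\spf(\calD_{\Phi_1,\bfs},\rho),
\end{align*}
and these four numbers sum to zero. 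Since $\rho$ is unitary on $L^2(M,S\otimes\CC^l)$ and preserves the domain (as $\rho$ commutes with the boundary chirality $\chi = \bfs\rmc(\nu^*)\theta$, which acts trivially on the $\CC^l$ factor), conjugation by $\rho$ is a unitary equivalence of the entire family $\{\calD_{\Phi_t,\bfs}\}_{t\in[0,1]}$ with $\{\rho^{-1}\calD_{\Phi_t,\bfs}\rho\}_{t\in[0,1]}$, so the bottom and top spectral flows coincide and cancel. What remains is $\spf(\calD_{\Phi_2,\bfs},\rho) = \spf(\calD_{\Phi_1,\bfs},\rho)$, as required.

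The main technical point, and the one I would be most careful about, is verifying the Riesz continuity of $F$ uniformly in $(t,r)$ when the Callias supports are allowed to expand with $t$; this is handled by the observation that the supports of $\Phi_1 - \Phi_2$ and of $[D,\rho]$ (under Assumption~\ref{A:rho rel bounded}) are controlled, so one common compact set serves throughout. A secondary subtlety is to confirm that the boundary condition determined by $\bfs$ is independent of the potential — this is automatic because $\bfs$ (and hence $\chi$) is built from bundle data on $\pM$ alone and does not involve $\Phi$.
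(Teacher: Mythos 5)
Your proof is correct and is essentially the paper's own argument: the four edges of your square are exactly the four concatenated paths the paper uses (interpolation of potentials, the $\rho$-family for $\Phi_2$, the conjugated interpolation back, and the reversed $\rho$-family for $\Phi_1$), with the same cancellation of the two interpolation edges via unitary conjugation by $\rho$ and the same homotopy-invariance input. The only slip is your parenthetical claim that the domain ``does not depend on $\Phi$'' --- for an unbounded Callias potential it can; what makes all the domains agree here is precisely that $\Phi_t-\Phi_1$ is compactly supported, hence a bounded endomorphism, which is the point the paper checks explicitly (together with a Rellich argument showing the perturbation is even relatively compact).
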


\begin{proof}
Since $\eta:=\calD_{\Phi_2}-\calD_{\Phi_1}=\Phi_2\theta-\Phi_1\theta$ is a bundle endomorphism with compact support, $u\in H^1_{\calD_{\Phi_1}}(M,S)$ if and only if $u\in H^1_{\calD_{\Phi_2}}(M,S)$. Combined with the fact that the boundary chirality used to define the local boundary condition is independent of the Callias potential, one concludes that $\dom(\calD_{\Phi_1,\bfs})=\dom(\calD_{\Phi_2,\bfs})$.

Note that $[\calD_{\Phi_1},\eta]$ is also a bundle endomorphism with compact support, thus having finite $L^\infty$-norm. For $u\in\dom(\calD_{\Phi_1,\bfs})$,
\[
\|\calD_{\Phi_1}\eta u\|_{L^2}\le\|\eta\calD_{\Phi_1}u\|_{L^2}+\|[\calD_{\Phi_1},\eta]u\|_{L^2}\le\|\eta\|_{L^\infty}\|\calD_{\Phi_1}u\|_{L^2}+\|[\calD_{\Phi_1},\eta]\|_{L^\infty}\|u\|_{L^2}.
\]
So $\calD_{\Phi_2}-\calD_{\Phi_1}$ is a bounded operator from $\dom(\calD_{\Phi_1,\bfs})$ to $H^1_{\calD_{\Phi_1}}(M,S)$. In fact, it is a bounded operator from $\dom(\calD_{\Phi_1,\bfs})$ to $H^1(\supp(\eta),S)$. By Rellich lemma, $H^1(\supp(\eta),S)\hookrightarrow L^2(M,S)$ is a compact embedding.  It then follows that $\calD_{\Phi_2}-\calD_{\Phi_1}:\dom(\calD_{\Phi_1,\bfs})\to L^2(M,S)$ is a compact operator.

For $0\le r\le1$, we proceed as in the proof of \cite[Proposition~2.1]{GorokhovskyLesch15} to consider the following concatenation of paths
\[
\begin{aligned}
f_1(r)=\calD_{(1-r)\Phi_1+r\Phi_2,\bfs},\qquad f_2(r)=(1-r)\calD_{\Phi_2,\bfs}+r\rho^{-1}\calD_{\Phi_2,\bfs}\rho, \\
f_3(r)=\rho^{-1}\calD_{(1-r)\Phi_2+r\Phi_1,\bfs}\rho,\qquad f_4(r)=(1-r)\rho^{-1}\calD_{\Phi_1,\bfs}\rho+r\calD_{\Phi_1,\bfs},
\end{aligned}
\]
which is homotopic to the constant path $\calD_{\Phi_1,\bfs}$. Each path is a continuous family of self-adjoint Fredholm operators acting on the fixed domain $\dom(\calD_{\Phi_1,\bfs})$. By the homotopy invariance of the spectral flow \cite[Proposition~2.3]{BoossLeschPhillips05},
\[
\spf(f_1)+\spf(f_2)+\spf(f_3)+\spf(f_4)=0.
\]
The lemma is now proved by observing that $\spf(f_1)+\spf(f_3)=0$, $\spf(f_2)=\spf(\calD_{\Phi_2,\bfs},\rho)$, and $\spf(f_4)=-\spf(\calD_{\Phi_1,\bfs},\rho)$.
\end{proof}

Before further discussion of the spectral flow, we divert to some computations that will induce spectral estimates for Callias operators and will be used frequently in this paper. Consider the Callias operators $\calD_{\Phi,\bfs}(r)$, $0\le r\le1$, as in \eqref{E:Callias family} with boundary condition given by an admissible boundary involution $\bfs\in\End(S_{|\pM})$. By combining \eqref{E:bdry Dirac mean}, \eqref{E:Callias square}, Weitzenb\"ock formula for Dirac operator, and Green's formula, we have for any $u\in\dom(\calD_{\Phi,\bfs}(r))=\dom(\calD_{\Phi,\bfs}(0))$,
\begin{equation}\label{E:Callias square int-1}
\begin{aligned}
\int_M|\calD_\Phi(r)u|^2 \D V_M& =\int_M\left(|D(r)u|^2+\langle u,(\Phi^2+[D(r),\Phi]\theta)u\rangle\right)\D V_M \\
&\quad+\int_\pM\langle u,\bfs\Phi u\rangle\D V_\pM \\
& =\int_M|\nabla(r)u|^2\D V_M+\int_M\langle u,(\calR(r)+\Phi^2+[D(r),\Phi]\theta)u\rangle\D V_M \\
&\quad+\int_\pM\left\langle u,\left(\bfs\Phi+\frac{n-1}{2}H-A(r)\right)u\right\rangle\D V_\pM \\
& =\int_M|\nabla(r)u|^2\D V_M+\int_M\langle u,(\calR(r)+\Phi^2+[D(r),\Phi]\theta)u\rangle\D V_M \\
&\quad+\int_\pM\left\langle u,\left(\bfs\Phi+\frac{n-1}{2}H\right)u\right\rangle\D V_\pM,
\end{aligned}
\end{equation}
where $\calR(r)=\sum_{i<j}\rmc(e_i^*)\rmc(e_j^*)R(r)(e_i,e_j)$ is a curvature endomorphism of the curvature $R(r)$ of $\nabla(r)$, and the last equality is because the boundary chirality defined in Definition~ \ref{D:bdry involution} anti-commutes with $A(r)$.

We then use the identity
\[
|\nabla u|^2=|\calP u|^2+\frac{1}{n}|Du|^2,
\]
where $\calP$ is the \emph{Penrose operator} defined by
\[
\calP_eu:=\nabla_eu+\frac{1}{n}\rmc(e^*)Du,\quad e\in TM.
\]
And \eqref{E:Callias square int-1} can be rearranged as

\begin{equation}\label{E:Callias square int-2}
\begin{aligned}
\int_M|\calD_\Phi(r)u|^2 \D V_M & =\frac{n}{n-1} \int_M(|\calP(r)u|^2+\langle u,\calR(r)u\rangle)\D V_M \\
  &\quad +\int_M\langle u,(\Phi^2+[D(r),\Phi]\theta)u\rangle \D V_M \\
  &\quad +\int_\pM\left\langle u,\left(\bfs\Phi+\frac{1}{2}n H\right)u\right\rangle\D V_{\pM}.
\end{aligned}
\end{equation}

\begin{remark}\label{R:Callias square int}
To be rigorous, the above computations should be performed first to
\[
u\in C^\infty_{\theta,\bfs}:=\left\{u\in C^\infty_c(M,S)\;|\;\chi(u_{|\p M})=u_{|\p M}\right\}.
\]
Then by the density of $C^\infty_{\theta,\bfs}$ in $\dom(\calD_{\Phi,\bfs}(r))$ (cf. \cite{BaerBallmann12}), they hold for $u\in\dom(\calD_{\Phi,\bfs}(r))$.

Using \eqref{E:Callias square int-1} and \eqref{E:Callias square int-2}, one will obtain several different forms of lower bound estimates for $\int_M|\calD_{\Phi,\bfs}(r)u|^2\D V_M$ by dropping some non-negative terms on the right-hand side. This can be done freely when $M$ is a compact manifold. When $M$ is non-compact, one needs to pay attention to the integrability. For example, if the right-hand side can be written as the sum of some a priori non-negative terms, then one still has the estimates. See \cite[Section~ 2]{GromovLawson83}, \cite[Remark~3.8]{Shi24LN}.
\end{remark}

We now turn back to the spectral flow and give the following vanishing result.

\begin{proposition}\label{P:spf vanishing}
Let $M$ be a complete Riemannian manifold with compact boundary, endowed with a relative Dirac bundle $(S,\theta)$. Let $\calD_\Phi=D+\Phi\theta$ be a Callias operator and $\bfs\in\End(S_{|\pM})$ be an admissible boundary involution. Suppose either of the following holds,
\begin{enumerate}
\item $\calD_\Phi$ has empty Callias support and $\bfs\Phi$ is non-negative on $S_{|\pM}$; \label{IT:spf vanishing-1}

\item $M$ is compact, $(S,\theta)$ has empty support, and $\bfs=\id$ (or $\bfs=-\id$) on $S_{|\pM}$. \label{IT:spf vanishing-2}
\end{enumerate}
Then for all $r\in[0,1]$ and $\rho\in C^\infty(M,U(l))$ satisfying Assumption~\ref{A:rho rel bounded}, we have $\spf(\calD_{\Phi,\bfs},\rho)=0$.
\end{proposition}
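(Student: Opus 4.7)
The plan is to prove both statements by showing that the family $\calD_{\Phi,\bfs}(r)$, $r\in[0,1]$, remains uniformly invertible; since zero then lies uniformly outside the spectrum throughout the deformation, the spectral flow must vanish.

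For part (i), I would work directly from the first integral identity \eqref{E:Callias square int-1}, which has the advantage of not involving the curvature term $\calR(r)$. The key technical observation is that $[D(r),\Phi]=[D,\Phi]$ for every $r\in[0,1]$. Indeed, $D(r)-D=r\rho^{-1}\rmc(\D\rho)$, the Callias hypothesis that $[D,\Phi]$ is a bundle endomorphism is equivalent to $\Phi$ commuting with Clifford multiplication (as noted in the footnote to Definition~\ref{D:Callias}), and $\Phi$ acts as the identity on the $\CC^l$ factor and hence commutes with $\rho$; together these force $\Phi$ to commute with $\rho^{-1}\rmc(\D\rho)$. Consequently the zeroth-order bulk operator in \eqref{E:Callias square int-1} satisfies
\[
\Phi^2+[D(r),\Phi]\theta=\Phi^2+[D,\Phi]\theta\ \ge\ \Phi^2-|[D,\Phi]|,
\]
since $\theta$ is a self-adjoint involution of norm one, and this is bounded below by a positive constant uniformly on all of $M$ because the Callias support is empty. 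Combined with the hypothesis $\bfs\Phi\ge 0$ on $\pM$, identity \eqref{E:Callias square int-1} then yields a uniform estimate $\|\calD_\Phi(r)u\|_{L^2}^{2}\ge c\|u\|_{L^2}^{2}$ for every $u\in\dom(\calD_{\Phi,\bfs}(r))$ and every $r\in[0,1]$, from which uniform invertibility of the family and thus $\spf(\calD_{\Phi,\bfs},\rho)=0$ follow.

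For part (ii), the idea is to reduce to part (i) by replacing $\Phi$. Define $\Phi':=\epsilon\cdot\id_S$ globally on $M$, where $\epsilon\in\{\pm1\}$ is chosen so that $\bfs=\epsilon\,\id$ on $\pM$. Since $(S,\theta)$ has empty support, $\Phi'$ is not constrained to vanish anywhere; it is self-adjoint, commutes with $\theta$ and with Clifford multiplication, and satisfies $\Phi'^{2}=\id$, so it is a legitimate Callias potential whose Callias support may be taken empty. On the compact manifold $M$, the original $\Phi$ and $\Phi'$ trivially coincide outside a compact subset (namely $M$ itself), so Lemma~\ref{L:spf homotop inv} yields $\spf(\calD_{\Phi,\bfs},\rho)=\spf(\calD_{\Phi',\bfs},\rho)$. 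On the boundary, $\bfs\Phi'=\epsilon\bfs=\id\ge0$, so $(\Phi',\bfs)$ fits the hypotheses of part (i), and the latter spectral flow vanishes by what was just proved.

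The main technical point is the commutator identity $[D(r),\Phi]=[D,\Phi]$: without it, the variation of the Dirac operator would introduce an $r$-dependent first-order correction into the bulk zeroth-order term of \eqref{E:Callias square int-1} that is not controlled by $\Phi^2$, and the desired uniform lower bound would fail. Once this identity is established, both items reduce to routine bookkeeping with \eqref{E:Callias square int-1} and an invocation of the compact-perturbation invariance of the spectral flow in Lemma~\ref{L:spf homotop inv}.
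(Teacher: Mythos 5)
Your proposal is correct and takes essentially the same route as the paper: part (i) via the identity \eqref{E:Callias square int-1} and uniform invertibility of the whole family, and part (ii) by switching to the potential $\pm\id$ via Lemma~\ref{L:spf homotop inv} and reducing to part (i). Your explicit verification that $[D(r),\Phi]=[D,\Phi]$ is exactly the content behind the paper's earlier remark that the Callias support of $\calD_\Phi(r)$ is independent of $r$, so it is a welcome but not divergent addition.
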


\begin{proof}
(\romnu1) If $\calD_\Phi$ has empty Callias support, then each $\calD_\Phi(r)$ has empty Callias support. For any $u\in\dom\calD_{\Phi,\bfs}(r)$, by \eqref{E:Callias square int-1},
\[
\begin{aligned}
\int_M|\calD_{\Phi}(r)u|^2\D V_M\ge& \int_M\langle u,(\Phi^2-|[D(r),\Phi]|)u\rangle\D V_M+\int_\pM\langle u,\bfs\Phi u\rangle\D V_\pM \\
\ge& \;C\int_M|u|^2\D V_M
\end{aligned}
\]
for some constant $C>0$. This shows that $\calD_{\Phi,\bfs}(r)$ is invertible for any $r\in[0,1]$. Therefore, the spectral flow vanishes.

(\romnu2) Since $M$ is compact and the relative Dirac bundle has empty support, any self-adjoint bundle endomorphism $\Phi\in\End(S)$ can serve as a Callias potential, provided that $[D,\Phi]$ is a bundle endomorphism. By Lemma~\ref{L:spf homotop inv}, the spectral flow is independent of the Callias potential. So we can choose the special bundle endomorphism $\Phi=\id$ for $\bfs=\id$ (or $\Phi=-\id$ for $\bfs=-\id$). In this case, $\calD_\Phi$ satisfies the hypothesis of part \ref{IT:spf vanishing-1}, which implies vanishing of the spectral flow.
\end{proof}

We now study the spectral flow on partitioned manifolds. For simplicity, we assume that $M$ is without boundary.
Let $\Sigma$ be a closed hypersurface of $M$ lying outside a Callias support of $\calD_\Phi$ with trivial normal bundle. We cut $M$ along $\Sigma$ to get a manifold $M^\cut$ whose boundary consists of two copies $\Sigma'$ and $\Sigma''$ of $\Sigma$. Denote by $\calD_\Phi^\cut:C^\infty(M^\cut,S_{|M^\cut})\to C^\infty(M^\cut,S_{|M^\cut})$ the Callias operator on $M^\cut$ induced by $\calD_\Phi$. Consider the local boundary condition determined by an admissible boundary involution $\bfs\in\End(S_{|\p M^\cut})$.

\begin{theorem}\label{T:splitting spf}
Under the above setting, if $\bfs_{|\Sigma'}=\bfs_{|\Sigma''}$, then for any $\rho\in C^\infty(M,U(l))$ satisfying Assumption~\ref{A:rho rel bounded}, $\spf(\calD_\Phi,\rho)=\spf(\calD^\cut_{\Phi,\bfs},\rho)$.
\end{theorem}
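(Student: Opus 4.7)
The plan is to invoke the general splitting formula for spectral flow of families of self-adjoint Fredholm Dirac-type operators on partitioned manifolds, \cite[Theorem~2.10]{Shi24LN}. Applied to the partition of $M$ obtained by cutting along $\Sigma$, that result yields the identity directly, once its hypotheses are verified for our Callias setting.

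First, I would check that $\calD_\Phi(r)$ and $\calD^\cut_{\Phi,\bfs}(r)$ are continuous families of self-adjoint Fredholm operators in the Riesz topology. This follows from the discussion in Subsection~\ref{SS:spf boundary}: the family $\calD_\Phi(r)=D(r)+\Phi\theta$ has a Callias support independent of $r$ (equal to that of $\calD_\Phi$, hence disjoint from $\Sigma$), so each $\calD_\Phi(r)$ is a Callias operator in the sense of Definition~\ref{D:Callias}. The admissible boundary involution $\bfs$ on $\pM^\cut=\Sigma'\sqcup\Sigma''$ induces a self-adjoint elliptic local boundary condition, which makes $\calD^\cut_{\Phi,\bfs}(r)$ self-adjoint Fredholm. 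Assumption~\ref{A:rho rel bounded} provides Riesz continuity in $r$ for both families.

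Second, I would verify the compatibility condition at the cut required by \cite[Theorem~2.10]{Shi24LN}. The inward unit normals $\nu_{\Sigma'}$ and $\nu_{\Sigma''}$ along the two boundary components of $M^\cut$ point in opposite directions as vectors in $M$, so under the canonical identification of $S_{|\Sigma'}$ with $S_{|\Sigma''}$ one obtains $\rmc(\nu_{\Sigma'}^*)=-\rmc(\nu_{\Sigma''}^*)$. The hypothesis $\bfs_{|\Sigma'}=\bfs_{|\Sigma''}$ then yields the sign relation $\chi_{\Sigma'}=-\chi_{\Sigma''}$ at the level of boundary chiralities, and this is precisely the matching condition under which the pair of local boundary conditions on $\Sigma'\sqcup\Sigma''$ is compatible with recovering $\calD_\Phi$ from $\calD^\cut_{\Phi,\bfs}$ by gluing.

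Third, since $\Sigma$ lies outside a Callias support, the family $\calD_\Phi(r)$ is uniformly invertible in a neighborhood of $\Sigma$ (uniformly in $r$), which precludes any spectral crossings from accumulating near the cut. Combined with the compatibility verified above, \cite[Theorem~2.10]{Shi24LN} then yields
\[
\spf(\calD_\Phi,\rho)=\spf(\calD^\cut_{\Phi,\bfs},\rho).
\]
The main anticipated obstacle is the second step: carefully translating between the chirality and normal-orientation conventions of \cite{Shi24LN} and the Callias framework of the present paper, so that the matching condition on the cut reads exactly as $\bfs_{|\Sigma'}=\bfs_{|\Sigma''}$. The remaining steps are routine, drawing on the machinery already set up in Subsections~\ref{SS:Callias} and \ref{SS:spf boundary}.
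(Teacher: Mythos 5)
Your proposal is correct and follows essentially the same route as the paper: the paper also reduces the statement to \cite[Theorem~2.10]{Shi24LN}, using $\rmc(\nu^*_{|\Sigma'})=-\rmc(\nu^*_{|\Sigma''})$ together with $\bfs_{|\Sigma'}=\bfs_{|\Sigma''}$ to get $\chi_{|\Sigma''}=-\chi_{|\Sigma'}$, i.e.\ the projection on $\Sigma''$ is $\id-Q$ when that on $\Sigma'$ is $Q$, so the boundary condition on $\pM^\cut$ lies in the self-adjoint Fredholm Grassmannian of \cite[Definition~2.3]{Shi24LN}. Your extra remarks (Riesz continuity from Assumption~\ref{A:rho rel bounded}, invertibility near the cut) are consistent with the setup already established in Subsection~\ref{SS:spf boundary} and do not change the argument.
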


\begin{proof}
Let $Q:=\frac{1}{2}(\id-\chi_{|\Sigma'})$ be the orthogonal projection to $L^2(\Sigma',S_{|\Sigma'})$. Then the boundary condition on $\Sigma'$ is given by $Q(u_{|\pM})=0$. By the hypothesis about $\bfs$, the corresponding projection on $\Sigma''$ is
\[
\frac{1}{2}(\id-\chi_{|\Sigma''})=\frac{1}{2}(\id+\chi_{|\Sigma'})=\id-Q,
\]
since $\rmc(\nu^*_{|\Sigma'})=-\rmc(\nu^*_{|\Sigma''})$. This means that the boundary condition on the whole $\p M^\cut$ is defined by an element of the self-adjoint Fredholm Grassmannian in the sense of \cite[Definition~2.3]{Shi24LN}. The formula then follows from \cite[Theorem~2.10]{Shi24LN}.
\end{proof}

\subsection{A codimension zero spectral flow formula on odd-dimensional manifolds}\label{SS:codim 0}

In this subsection, we take a closer look at the spectral flow of Callias operators on an odd-dimensional manifold. Basically, this has been discussed in \cite[Subsection~3.3]{Shi24LN}. We reformulate the result here with a little more generality.

Let $M$ be an odd-dimensional complete Riemannian manifold without boundary, endowed with a Dirac bundle $\uS\to M$. Let $E,F\to M$ be two Hermitian vector bundles with Hermitian connections. Then the twisted bundle
\begin{equation}\label{E:twist bundle odd}
S:=\uS\otimes(E\oplus F)=(\uS\otimes E)\oplus(\uS\otimes F)
\end{equation}
is a Dirac bundle over $M$, with the connection being the usual tensor product connection and the Clifford multiplication given by
\[
\rmc(\xi)=\left(
\begin{matrix}
\underline{\rmc}(\xi)\otimes\id_E & 0 \\
0 & -\underline{\rmc}(\xi)\otimes\id_F
\end{matrix}\right),\quad\xi\in T^*M,
\]
where $\underline{\rmc}(\xi)$ is the Clifford multiplication on $\uS$. The associated Dirac operator has the form
\begin{equation}\label{E:twist Dirac odd}
D=\left(
\begin{matrix}
D_E & 0 \\
0 & -D_F
\end{matrix}\right),
\end{equation}
where $D_E,D_F$ are the corresponding Dirac operators twisted by $E,F$, respectively. We call the operator $D$ twisted by $E\oplus F^{\rm op}$.

In order for $S$ to have a relative Dirac bundle structure, we make the following assumption as in \cite{Cecchini20LN,CeccZeid24GT} to $E,F$.

\begin{assumption}\label{A:GL pair}
$(E,F)$ is a \emph{Gromov--Lawson pair} with support $K$, that is, there exist a compact subset $K\Subset M$ and a parallel unitary bundle isomorphism $\frakt:E_{|M\setminus K}\to F_{|M\setminus K}$ which extends to a smooth bundle map on a neighborhood of $\overline{M\setminus K}$.
\end{assumption}

Under this assumption, $S$ becomes a relative Dirac bundle with involution
\begin{equation*}\label{E:GL pair rel Dirac}
\theta=\left(
\begin{matrix}
0 & \id\otimes\frakt^* \\
\id\otimes\frakt & 0
\end{matrix}\right):S_{|M\setminus K}\to S_{|M\setminus K}.
\end{equation*}
So we can talk about the Callias operator $\calD_\Phi$ \eqref{E:Callias} in this case.

Let $\Sigma$ be a (not necessarily connected) closed hypersurface of $M$ with trivial normal bundle such that $M$ is partitioned along $\Sigma$ as
\begin{equation}\label{E:partition}
M^\cut=M'\sqcup_\Sigma M'',
\end{equation}
where $M'$ is a Callias support of $\calD_\Phi$ (thus is a compact set containing the set $K$ in Assumption~ \ref{A:GL pair}). The boundaries $\pM'$ and $\pM''$ are two copies of $\Sigma$, denoted by $\Sigma'$ and $\Sigma''$, respectively.
From the Gromov--Lawson pair $(E,F)$ over $M'$, one can construct a Hermitian vector bundle over the closed double
\[
\double M':=M'\cup_{\Sigma}M'^-,
\]
where $M'^-$ is $M'$ with opposite orientation. Precisely, using the bundle isomorphism $\frakt$, one defines a bundle $V(E,F)$ over $\double M'$ such that it coincides with $E$ over $M'$ and with $F$ over $M'^-$ outside a small collar neighborhood of $\Sigma$. Let $D_{V(E,F)}$ be the Dirac operator over $\double M'$ twisted by $V(E,F)$.

Let $\rho=\rho^+\oplus\rho^-\in C^\infty(M,U(l)\oplus U(l))$. Suppose $\rho^+=\rho^-$ is locally constant outside a Callias support of $\calD_\Phi$. Then $[D,\rho]$ is automatically a bounded bundle endomorphism. In this case one can choose the above hypersurface $\Sigma$ such that $\rho^+=\rho^-$ is locally constant near $\Sigma$ and on $M''$.
Thus $\rho^+_{|M'}$ and $\rho^-_{|M'}$ can be glued smoothly to yield $\tilde{\rho}\in C^\infty(\double M',U(l))$, which means that $\tilde{\rho}$ is an extension of $\rho^+_{|M'}$ to $\double M'$ such that $\tilde{\rho}_{|M'^-}$ coincides with $\rho^-_{|M'}$.

\begin{theorem}\label{T:codim 0 spf}
Under the above setting, if $\Phi\in\End(S)$ is a Callias potential that has a definite sign at infinity, namely $\Phi$ is a positive (or negative) operator outside a Callias support $M'$, then for any $\rho=\rho^+\oplus\rho^-\in C^\infty(M,U(l)\oplus U(l)$ such that $\rho^+=\rho^-$ is locally constant outside an interior subset of $M'$, we have
\[
\spf(\calD_\Phi,\rho)=\spf(D_{V(E,F)},\tilde{\rho}),
\]
where $D_{V(E,F)}$ is the twisted Dirac operator on $\double M'$.
\end{theorem}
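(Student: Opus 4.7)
The plan is to apply the splitting formula (Theorem~\ref{T:splitting spf}) twice: first on $M$ to localize the left-hand side to the compact Callias support, and then on $\double M'$ to re-express the right-hand side. The two resulting compact problems on $M'$ are then shown to have equal spectral flows.

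First, I would enlarge the Callias support slightly to a compact set $M' \Subset M$ with smooth boundary $\Sigma := \pM'$, so that $\Phi$ is strictly positive on a neighborhood of $\Sigma$ (WLOG the positive case) and $\rho^+ = \rho^-$ is locally constant on $M'' := \overline{M\setminus M'}$ and near $\Sigma$. Taking the admissible boundary involution $\bfs = \id$, which trivially satisfies $\bfs_{|\Sigma'} = \bfs_{|\Sigma''}$, Theorem~\ref{T:splitting spf} yields
\[
\spf(\calD_\Phi,\rho) = \spf(\calD^{M'}_{\Phi,\id},\rho|_{M'}) + \spf(\calD^{M''}_{\Phi,\id},\rho|_{M''}).
\]
On $M''$ the Callias support is empty and $\bfs\Phi = \Phi$ is positive along $\pM''$, so Proposition~\ref{P:spf vanishing}(i) kills the second term.

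Next, I would apply the analogous splitting to $\double M' = M' \cup_\Sigma M'^-$ cut along $\Sigma$, with boundary condition coming from the transmission condition for smooth sections of $\uS\otimes V(E,F)$ across $\Sigma$. Using $\frakt$ to identify $V(E,F)|_{M'} \cong E$ and $V(E,F)|_{M'^-} \cong F$, and absorbing the orientation reversal of $M'^-$ into a sign on the Clifford multiplication (so that the Dirac operator on $M'^-$ twisted by $F$ becomes $-D_F$), the resulting cut operator lives on $M'$, acting on the bundle $(\uS\otimes E)\oplus(\uS\otimes F) = S|_{M'}$, with principal part $\operatorname{diag}(D_E,-D_F) = D$ from~\eqref{E:twist Dirac odd}. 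The function $\tilde\rho$ correspondingly pulls back to $\rho^+ \oplus \rho^- = \rho|_{M'}$.

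The hard part will be identifying, at the level of spectral flow, this cut operator on $M'$ with $\calD^{M'}_{\Phi,\id}$. The two operators share the same bundle and principal symbol but differ in the presence of the Callias potential $\Phi\theta$ and in the boundary chirality: the transmission chirality is $\theta|_\Sigma$, whereas $\bfs = \id$ produces $\chi = \rmc(\nu^*)\theta|_\Sigma$, and the latter is not attainable from an admissible involution because $-\rmc(\nu^*)$ is skew-adjoint. My plan is to use Lemma~\ref{L:spf homotop inv} to deform $\Phi$ freely inside $M'$ and to construct a collar-supported homotopy interpolating between the two setups, essentially a Callias-type trick in which concentrating $\Phi$ into a thin collar of $\Sigma$ with large amplitude effectively enforces the transmission boundary condition in the limit, while the bulk operator away from the collar reduces to the plain Dirac operator $D$. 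Verifying that the entire path consists of self-adjoint Fredholm operators without introducing extraneous spectral flow crossings, so that invariance of spectral flow under continuous paths yields the desired equality, is the central technical point.
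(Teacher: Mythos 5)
The first half of your argument coincides with the paper's: cut $M$ along $\Sigma=\pM'$, take $\bfs=\id$, kill the $M''$ contribution by Proposition~\ref{P:spf vanishing}\ref{IT:spf vanishing-1}, and use Lemma~\ref{L:spf homotop inv} to drop the potential on $M'$, reducing everything to $\spf(\calD'_{0,\id},\rho')$. The problem is your second half. The identification of $\spf(\calD'_{0,\id},\rho')$ with $\spf(D_{V(E,F)},\tilde\rho)$ is exactly the crux of the theorem, and you have not proved it: you correctly observe that cutting the double along $\Sigma$ produces the operator $D=\operatorname{diag}(D_E,-D_F)$ on $S_{|M'}$ but with the \emph{transmission} condition, whose chirality is $\theta_{|\Sigma}$ rather than $\chi=\rmc(\nu^*)\theta_{|\Sigma}$, and that no admissible involution $\bfs$ realizes the former. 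Your proposed fix --- a collar-supported homotopy in which a large Callias potential concentrated near $\Sigma$ ``enforces the transmission condition in the limit'' --- is only a plan, not an argument. It cannot be reduced to Lemma~\ref{L:spf homotop inv}, since the two boundary-value problems are different self-adjoint extensions (different domains), not compact perturbations of one operator on a fixed domain; you would need continuity of the path in the Riesz topology through a degenerating family of boundary conditions/potentials and a computation showing the path contributes no net spectral flow, which is essentially a new theorem and is nowhere established in the paper's toolkit (the splitting formula of Theorem~\ref{T:splitting spf} only covers conditions coming from admissible involutions, via the Grassmannian of \cite{Shi24LN}).

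The paper avoids this comparison entirely by an algebraic doubling trick: on $\double M'$ it forms the bundle $\uS\otimes(\tilE\oplus\tilF^{\rm op})$ with $\tilE=V(E,F)$ and $\tilF$ the double of $F$, so that the resulting operator $\tilD$ is again a Callias-type operator to which Theorem~\ref{T:splitting spf} with $\bfs=\id$ applies. Splitting along $\Sigma$ gives $\spf(\tilD,\tilde\rho\oplus\tilde\rho^-)=\spf(\calD'_{0,\id},\rho')$ after the $F\oplus F^{\rm op}$ piece on $M'^-$ is killed by Proposition~\ref{P:spf vanishing}\ref{IT:spf vanishing-2}; on the other hand the block decomposition gives $\spf(\tilD,\tilde\rho\oplus\tilde\rho^-)=\spf(D_{V(E,F)},\tilde\rho)-\spf(D_{\tilF},\tilde\rho^-)$, and the last term vanishes by the symmetry of $\tilF$ and $\tilde\rho^-$ over the double via Getzler's heat-kernel formula for spectral flow. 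To repair your proof you should either adopt this doubling argument or supply a genuine proof (for instance in the spirit of \cite{GorokhovskyLesch15}) that the transmission and chiral problems have equal spectral flow; as written, that step is a gap.
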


\begin{proof}
Without loss of generality, suppose $\Phi$ is positive at infinity. Cut $M$ along $\Sigma$ as in \eqref{E:partition} and impose the local boundary condition associated to the admissible boundary involution $\bfs=\id\in\End(S_{|\pM^\cut})$. By Theorem~\ref{T:splitting spf},
\[
\spf(\calD_\Phi,\rho)=\spf(\calD'_{\Phi,\id},\rho')+\spf(\calD''_{\Phi,\id},\rho''),
\]
where the terms with superscript $'$ or $''$ denote the corresponding items on $M'$ or $M''$.

Since $\calD''_\Phi$ is a Callias operator with empty Callias support and $\bfs\Phi$ is non-negative on $S_{|\pM''}$, it follows from Proposition~\ref{P:spf vanishing}.\ref{IT:spf vanishing-1} that $\spf(\calD''_{\Phi,\id},\rho'')=0$. For $\spf(\calD'_{\Phi,\id},\rho')$, one can apply Lemma~\ref{L:spf homotop inv} to equate it with $\spf(\calD'_{0,\id},\rho')$. So
\begin{equation}\label{E:GL pair spf-1}
\spf(\calD_\Phi,\rho)=\spf(\calD'_{0,\id},\rho')
\end{equation}

On the other hand, denote $\tilE=V(E,F)$ and let $\tilF$ be the extension of $F$ to $\double M'$ such that $\tilF_{|M'^-}=F$. Replacing $E$ and $F$ by $\tilE$ and $\tilF$ respectively in \eqref{E:twist bundle odd} and \eqref{E:twist Dirac odd}, one gets a twisted bundle $\tilS$ and a Dirac operator $\tilD$ twisted by $\tilE\oplus\tilF^{\rm op}$ over $\double M'$. By the decomposition of $\tilS$ and $\tilD$, we have
\[
\spf(\tilD,\tilde{\rho}\oplus\tilde{\rho}^-)=\spf(D_{V(E,F)},\tilde{\rho})-\spf(D_{\tilF},\tilde{\rho}^-),
\]
where $\tilde{\rho}^-\in C^\infty(\double M',U(l))$ is the smooth gluing of $\rho^-_{|M'}$ and $\rho^-_{|M'}$.
By \cite[Corollary~2.7]{Getzler93}, the last term can be computed as
\[
\spf(D_{\tilF},\tilde{\rho}^-)=\sqrt{\frac{\varepsilon}{\pi}}\int_0^1\Tr\big((\tilde{\rho}^-)^{-1}[D_{\tilF},\tilde{\rho}^-]e^{-\varepsilon D^2_{\tilF}(r)}\big)\D r,
\]
where $D_{\tilF}(r)=(1-r)D_{\tilF}+r(\tilde{\rho}^-)^{-1}D_{\tilF}\tilde{\rho}^-$. Note that the trace can be written as an integration over $\double M'$. It follows from the symmetry of $\tilF$ and $\tilde{\rho}^-$ that $\spf(D_{\tilF},\tilde{\rho}^-)$ vanishes. Hence
\begin{equation}\label{E:GL pair spf-2}
\spf(\tilD,\tilde{\rho}\oplus\tilde{\rho}^-)=\spf(D_{V(E,F)},\tilde{\rho}).
\end{equation}

Lastly, we compute $\spf(\tilD,\tilde{\rho}\oplus\tilde{\rho}^-)$ by applying Theorem~\ref{T:splitting spf} again,
\begin{equation}\label{E:GL pair spf-3}
\spf(\tilD,\tilde{\rho}\oplus\tilde{\rho}^-)=\spf(\calD'_{0,\id},\rho')+\spf(\calD'^{F,F}_{0,\id},\rho^-_{|M'}\oplus\rho^-_{|M'})=\spf(\calD'_{0,\id},\rho'),
\end{equation}
where $\calD'^{F,F}_{0,\id}$ is the Dirac operator on $M'^-$ twisted by $F\oplus F^{\rm op}$, whose spectral flow thus vanishes because of Proposition~\ref{P:spf vanishing}.\ref{IT:spf vanishing-2}. The theorem is proved by combining \eqref{E:GL pair spf-1}, \eqref{E:GL pair spf-2} and \eqref{E:GL pair spf-3}.
\end{proof}

The notion of Gromov--Lawson pair also makes sense on a manifold with compact boundary, with the compact set $K$ in Assumption~\ref{A:GL pair} contained in the interior of $M$. In this setting, the proof of Theorem~\ref{T:codim 0 spf} indicates Corollary~\ref{IntroC:codim 0 spf}, a codimension zero spectral flow formula on compact manifolds with boundary.

\subsection{A codimension one spectral flow formula on even-dimensional manifolds}\label{SS:codim 1}

In this subsection, we turn our attention to the even-dimensional case. On an even-dimensional complete Riemannian manifold $M$ without boundary, let $\uS$ be a Dirac bundle\footnote{In applications, $\uS$ is usually $\ZZ_2$-graded.} with Dirac operator $\uD$. Form a $\ZZ_2$-graded Dirac bundle
\begin{equation}\label{E:graded bundle-even}
S:=\uS\oplus\uS,
\end{equation}
with respect to which the Clifford multiplication is given by
\[
\rmc(\xi)=\left(
\begin{matrix}
0 & \underline{{\rmc}}(\xi) \\
\underline{{\rmc}}(\xi) & 0
\end{matrix}\right),\quad\xi\in T^*M,
\]
where $\underline{\rmc}(\xi)$ is the Clifford multiplication on $\uS$.
Then $S$ can be made a relative Dirac bundle with \emph{empty} support by setting
\[
\theta=\left(
\begin{matrix}
1 & 0 \\
0 & -1
\end{matrix}\right)\in\End(S).
\]
The Dirac operator associated to $S$ has the form
\begin{equation}\label{E:Dirac even}
D=\left(
\begin{matrix}
0 & \uD \\
\uD & 0
\end{matrix}\right).
\end{equation}

Let $\phi$ be a bundle endomorphism on $\uS$ which commutes with $\underline{\rmc}(\cdot)$. Like in Definition~\ref{D:Callias}, suppose the bundle endomorphism $\phi^2-|[\uD,\phi]|$ is uniformly positive outside a compact subset $K\Subset M$. Then we can construct a Callias potential $\Phi:=\phi\oplus\phi\in\End(S)$ with respect to \eqref{E:graded bundle-even}, and get a Callias operator
\begin{equation}\label{E:Callias even}
\calD_\Phi=D+\Phi\theta=\left(
\begin{matrix}
\phi & \uD \\
\uD & -\phi
\end{matrix}\right).
\end{equation}
It has $K$ as a Callias support.

Let $M'\Subset M$ be a compact subset containing $K$ in the interior such that $\Sigma:=\pM'$ is a closed hypersurface of $M$ with trivial normal bundle. Since $\phi$ is invertible on $\Sigma$, there is a bundle decomposition $\uS_\Sigma=\uS_{\Sigma+}\oplus\uS_{\Sigma-}$, where $\uS_\Sigma:=\uS_{|\Sigma}$ and $\uS_{\Sigma\pm}$ denote the positive/negative eigenspaces of $\phi_{|\Sigma}\in\End(\uS_{\Sigma})$. $\uS_\Sigma$ and $\uS_{\Sigma\pm}$ are Dirac bundles over $\Sigma$ with the Clifford multiplication $\rmc_\Sigma(\xi):=\underline{\rmc}(\nu^*)^{-1}\underline{\rmc}(\xi)$ for $\xi\in T^*\Sigma$, where $\nu$ is the unit normal vector field pointing inward $M'$. Let $D_\Sigma:C^\infty(\Sigma,\uS_\Sigma)\to C^\infty(\Sigma,\uS_\Sigma)$ and $D_{\Sigma\pm}:C^\infty(\Sigma,\uS_{\Sigma\pm})\to C^\infty(\Sigma,\uS_{\Sigma\pm})$ be the corresponding Dirac operators. Then
\[
D_{\Sigma\pm}=P_\pm\circ D_\Sigma\circ P_\pm,
\]
where $P_\pm$ is the projection from $\uS_\Sigma$ onto $\uS_{\Sigma\pm}$.

\begin{theorem}\label{T:codim 1 spf}
Under the above setting, for any $\rho\in C^\infty(M,U(l))$ satisfying Assumption~\ref{A:rho rel bounded}, we have
\begin{equation}\label{E:codim 1 spf}
\spf(\calD_\Phi,\rho)=\frac{1}{2}\left(\spf(D_{\Sigma+},\rho_{|\Sigma})-\spf(D_{\Sigma-},\rho_{|\Sigma})\right)=\spf(D_{\Sigma+},\rho_{|\Sigma})=-\spf(D_{\Sigma-},\rho_{|\Sigma}).
\end{equation}
\end{theorem}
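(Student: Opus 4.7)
The plan is to follow the same template as the codimension zero formula in Theorem~\ref{T:codim 0 spf}: partition $M$ along $\Sigma=\pM'$ using Theorem~\ref{T:splitting spf}, show the spectral flow on the non-compact piece vanishes, and identify the residual contribution on the compact piece with a boundary spectral flow on $\Sigma$.

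First I would take the admissible boundary involution $\bfs=(P_+-P_-)\oplus(P_+-P_-)\in\End(S_{|\Sigma})$, where $P_\pm$ project onto $\uS_{\Sigma\pm}$. Since $\phi$ commutes with the Clifford multiplication $\underline{\rmc}$, so do $P_\pm$, so $\bfs$ is self-adjoint, involutive, and commutes with $\rmc(\xi)$, hence admissible; being defined intrinsically on $\Sigma$, it satisfies the matching condition $\bfs_{|\Sigma'}=\bfs_{|\Sigma''}$ of Theorem~\ref{T:splitting spf} automatically. A direct computation gives $\bfs\Phi=|\phi|\oplus|\phi|\ge0$ on $S_{|\Sigma''}$, and since $\calD_\Phi$ has empty Callias support on $M''$, Proposition~\ref{P:spf vanishing}.\ref{IT:spf vanishing-1} yields $\spf(\calD''_{\Phi,\bfs},\rho_{|M''})=0$. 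The splitting formula then gives
\[
\spf(\calD_\Phi,\rho)=\spf(\calD'_{\Phi,\bfs},\rho_{|M'}).
\]

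Next I would reduce the right-hand side to a spectral flow on $\Sigma$ via a cylindrical model. Using Lemma~\ref{L:spf homotop inv}, I may homotope $\phi$ on $M'$ so that in a collar $\Sigma\times[0,\varepsilon)\hookrightarrow M'$ the metric, the connection on $\uS$, and $\phi$ are all pulled back from $\Sigma$. The relations $\{D,\rmc(\D t)\}=0$ (because $\D t$ is parallel on the cylinder) and $\{\Phi\theta,\rmc(\D t)\}=0$ (since $\theta$ anti-commutes with $\rmc$ while $\Phi$ commutes with it) together give $\{\calD_\Phi,\rmc(\D t)\}=0$, so on the collar $\calD_\Phi=\rmc(\D t)(\partial_t+B)$ for a self-adjoint operator $B$ on $S_{|\Sigma}$ anti-commuting with $\chi=\bfs\rmc(\D t)\theta$. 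A change of basis simultaneously diagonalising $\bfs$ and $\rmc(\D t)\theta$ identifies the $(+1)$-eigenspace of $\chi$ inside $S_{|\Sigma}=\uS_\Sigma\oplus\uS_\Sigma$ with $\uS_{\Sigma+}$, on which $B$ reduces (modulo a bounded endomorphism inessential for the spectral flow of the family $r\mapsto\calD'_{\Phi,\bfs}(r)$) to $D_{\Sigma+}$. A standard cylindrical-reduction argument, in the spirit of the classical Callias index theorem, then produces $\spf(\calD'_{\Phi,\bfs},\rho_{|M'})=\spf(D_{\Sigma+},\rho_{|\Sigma})$. The equality $\spf(\calD_\Phi,\rho)=-\spf(D_{\Sigma-},\rho_{|\Sigma})$ follows by repeating the argument after conjugating by the swap $\sigma$ of the two summands of $S=\uS\oplus\uS$: using $\sigma\theta\sigma=-\theta$ and $\sigma\Phi\sigma=\Phi$, the conjugated Callias operator has the same spectrum but the roles of $\uS_{\Sigma+}$ and $\uS_{\Sigma-}$ are interchanged, together with an overall sign from the flip of $\theta$. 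Averaging the two expressions yields the $\tfrac12$-formula in \eqref{E:codim 1 spf}.

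The main obstacle I expect is the linear-algebra identification in the second step: carrying out the change of basis that diagonalises $\chi=\bfs\rmc(\D t)\theta$ and verifying that the restriction of $B$ to its $(+1)$-eigenspace really is $D_{\Sigma+}$ rather than some twist of it. The signs from $\theta$, $\rmc(\D t)$, and $P_+-P_-$ must all align, and this is the technical heart of the argument. The remaining pieces---the splitting, the vanishing on $M''$, and the symmetry giving the two sides of the identity---closely parallel the codimension zero case.
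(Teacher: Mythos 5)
Your first half coincides with the paper's argument: your involution $\bfs=(P_+-P_-)\oplus(P_+-P_-)$ is exactly the paper's $\phi_0\oplus\phi_0$, it satisfies the matching hypothesis of Theorem~\ref{T:splitting spf}, the computation $\bfs\Phi=|\phi|\oplus|\phi|\ge0$ together with the empty Callias support on $M''$ kills that piece via Proposition~\ref{P:spf vanishing}~\ref{IT:spf vanishing-1}, and Lemma~\ref{L:spf homotop inv} lets you discard the potential on $M'$. Up to this point the proposal is correct and identical in route to the paper.

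The reduction on $M'$, which you yourself identify as the technical heart, is where the proposal fails, in two concrete ways. First, the $(+1)$-eigenspace of $\chi=\bfs\rmc(\nu^*)\theta$ is the graph $\{(u,\underline{\rmc}(\nu^*)\phi_0u):u\in\uS_\Sigma\}$, so its rank equals $\rk\uS_\Sigma=\rk\uS_{\Sigma+}+\rk\uS_{\Sigma-}$; no change of basis can identify it with $\uS_{\Sigma+}$, and the compression of the collar operator $B$ to it is not $D_{\Sigma+}$ modulo bounded terms. Passing from this full graph-type (chiral) local boundary condition on $M'$ to the operators $D_{\Sigma\pm}$ on the $\phi_0$-eigenbundles is precisely the nontrivial content of Gorokhovsky--Lesch \cite{GorokhovskyLesch15}*{Theorems~3.3, 3.3$'$}, which is what the paper invokes at this step; a ``standard cylindrical-reduction argument'' is not available off the shelf, and your sketch does not supply a substitute. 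Second, your mechanism for the sign cannot work: conjugating the family $\calD_\Phi(r)$ by the swap of the two summands is conjugation by a fixed unitary, which preserves the spectrum of every member of the family and hence the spectral flow exactly; there is no ``overall sign from the flip of $\theta$'', since spectral flow (unlike a graded index) does not see $\theta$ at all. In the paper the last two equalities of \eqref{E:codim 1 spf} come from a different source, namely the cobordism invariance of spectral flow, $\spf(D_{\Sigma+},\rho_{|\Sigma})+\spf(D_{\Sigma-},\rho_{|\Sigma})=0$ because $\Sigma$ bounds $M'$ and all data extend over $M'$ (\cite{GorokhovskyLesch15}*{Corollary~3.5}); without that input your averaging step producing the $\tfrac12$-formula also has no support. (A minor further slip: on a product collar $\{D,\rmc(\D t)\}=-2\nabla_t\ne0$; the correct statement is $D=\rmc(\D t)(\nabla_t+A)$ with $A$ anticommuting with $\rmc(\D t)$.)
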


\begin{proof}
Cutting $M$ along $\Sigma$, one gets a manifold with boundary $M^\cut=M'\sqcup_\Sigma M''$. Denote $\Sigma'$ (resp. $\Sigma''$) to be the boundary of $M'$ (resp. $M''$). Choose an admissible boundary involution $\bfs\in\End(S_{|\pM^\cut})$ such that $\bfs_{|\Sigma'}=\bfs_{|\Sigma''}=\phi_0\oplus\phi_0$
with respect to \eqref{E:graded bundle-even}. Here $\phi_0=\phi_{|\Sigma}(\phi_{|\Sigma}^2)^{-1/2}$ is the unitarization of $\phi_{|\Sigma}$, i.e., $\phi_0=\pm\id$ on $\uS_{\Sigma\pm}$.

By Theorem~\ref{T:splitting spf},
\[
\spf(\calD_\Phi,\rho)=\spf(\calD'_{\Phi,\bfs},\rho')+\spf(\calD''_{\Phi,\bfs},\rho'').
\]
Note that $\calD_\Phi$ has empty Callias support on $M''$ and $\bfs\Phi$ is non-negative on $S_{\Sigma''}$. Hence $\spf(\calD''_{\Phi,\bfs},\rho'')=0$ from Proposition~\ref{P:spf vanishing}.\ref{IT:spf vanishing-1}.

For $\spf(\calD'_{\Phi,\bfs},\rho')$, since it is independent of the Callias potential by Lemma~\ref{L:spf homotop inv}, we may assume that $\Phi=0$. Now let us take a closer look at the boundary condition. Note that the boundary condition is determined by
\[
\chi=\bfs\rmc(\nu^*)\theta=\left(
\begin{matrix}
0 & -\underline{\rmc}(\nu^*)\phi_0 \\
\underline{\rmc}(\nu^*)\phi_0 & 0
\end{matrix}\right).
\]
If $u\oplus v\in\dom(\calD'_{0,\bfs})$, then $\chi(u_{|\Sigma'}\oplus v_{|\Sigma'})=u_{|\Sigma'}\oplus v_{|\Sigma'}$, which implies that $v_{|\Sigma'}=\underline{\rmc}(\nu^*)\phi_0u_{|\Sigma'}$. This is exactly the local boundary condition studied by Gorokhovsky and Lesch in \cite{GorokhovskyLesch15}. Using \cite[Theorems~3.3 and 3.3$'$]{GorokhovskyLesch15}, we obtain \eqref{E:codim 1 spf}, where the last two equalities are due to the cobordism invariance of spectral flow (\cite[Corollary~3.5]{GorokhovskyLesch15}).
\end{proof}

An interesting application of the codimension one spectral flow formula is Corollary~\ref{IntroC:codim 1 spf} for the Riemannian band, which we restate here.

\begin{corollary}\label{C:codim 1 spf}
Let $M$ be an even-dimensional compact Riemannian band, and $D$ be the Dirac operator \eqref{E:Dirac even} on $S=\uS\oplus\uS$. Choose the admissible boundary involution $\bfs=\pm\id\in\End(S_{|\pM})$ on $\p_\pm M$. Then for any Callias potential $\Phi\in\End(S)$ and any $\rho\in C^\infty(M,U(l))$,
\[
\spf(\calD_{\Phi,\bfs},\rho)=\spf(D_{\p_+M},\rho_{|\p_+M})=-\spf(D_{\p_-M},\rho_{|\p_-M}).
\]
\end{corollary}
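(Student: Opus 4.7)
The plan is to view this corollary as a direct specialization of the reasoning already used at the end of the proof of Theorem~\ref{T:codim 1 spf}: since $M$ is compact with boundary, no preliminary cutting along an interior hypersurface is needed, and one can feed the given boundary condition straight into the Gorokhovsky--Lesch theorem on compact manifolds with boundary. As a first reduction, since $M$ is compact every Callias potential differs from the zero potential on a compact set, and Lemma~\ref{L:spf homotop inv} gives $\spf(\calD_{\Phi,\bfs},\rho)=\spf(\calD_{0,\bfs},\rho)$, so it is enough to handle $\Phi=0$.

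Next I would unpack the boundary chirality $\chi=\bfs\,\rmc(\nu^*)\,\theta$. With $\theta=\operatorname{diag}(1,-1)$ in the decomposition $S=\uS\oplus\uS$ and $\rmc(\nu^*)$ antidiagonal with blocks $\underline{\rmc}(\nu^*)$, a direct matrix calculation shows that the defining condition $\chi(u\oplus v)=u\oplus v$ reads $v_{|\p_\pm M}=\pm\underline{\rmc}(\nu^*)\,u_{|\p_\pm M}$ on the two boundary components. This is precisely the local boundary condition treated by Gorokhovsky--Lesch, with the unitarization $\phi_0$ appearing in the proof of Theorem~\ref{T:codim 1 spf} equal to $\pm\id$ on $\p_\pm M$; equivalently, one is in the special (limiting) case where the positive eigenspace of $\phi_0$ occupies all of $\uS_{|\p_+M}$ and none of $\uS_{|\p_-M}$, and vice versa for the negative eigenspace.

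With the boundary condition so identified, \cite[Theorems 3.3 and 3.3$'$]{GorokhovskyLesch15} directly yield
\[
\spf(\calD_{0,\bfs},\rho)=\spf(D_{\p_+M},\rho_{|\p_+M})=-\spf(D_{\p_-M},\rho_{|\p_-M}),
\]
where the second equality uses the cobordism invariance of the spectral flow (\cite[Corollary~3.5]{GorokhovskyLesch15}) applied to $M$ as a cobordism with $\pM=\p_+M\sqcup\p_-M$. I do not anticipate any essential obstacle; the only thing requiring minor care is the sign bookkeeping for the inward unit normal $\nu$ on each boundary component and the orientation of $M$ as a cobordism from $\p_+M$ to $\p_-M$, but both match the conventions already established in Subsections~\ref{SS:Callias}--\ref{SS:codim 1}, so the argument is essentially a transcription of the final step in the proof of Theorem~\ref{T:codim 1 spf}.
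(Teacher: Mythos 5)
Your proposal is correct and follows essentially the same route as the paper: reduce to $\Phi=0$ (or a convenient potential) via Lemma~\ref{L:spf homotop inv}, identify the boundary chirality $\chi=\bfs\,\rmc(\nu^*)\theta$ with the Gorokhovsky--Lesch local boundary condition in which $\phi_0=\pm\id$ on $\p_\pm M$, and conclude by \cite[Theorems~3.3 and 3.3$'$]{GorokhovskyLesch15} together with cobordism invariance \cite[Corollary~3.5]{GorokhovskyLesch15}. The paper merely phrases the reduction by picking $\Phi=\phi\oplus\phi$ with $\phi_{|\p_\pm M}=\pm\id$ and citing the second half of the proof of Theorem~\ref{T:codim 1 spf}, which is what you transcribe directly.
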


\begin{proof}
Since the spectral flow does not depend on the Callias potential, we can choose a Callias potential $\Phi=\phi\oplus\phi$ such that $\phi_{|\p_\pm M}=\pm\id$. Then the admissible boundary involution $\bfs$ is given exactly by the unitarization of $\phi_{|\p_\pm M}$. The formula for the spectral flow now follows from the second half of the proof of Theorem~\ref{T:codim 1 spf}.
\end{proof}

\begin{remark}\label{R:codim 1 spf}
The main scenario we shall consider is that $M$ is an even-dimensional Riemannian \emph{spin} manifold. In this case the Dirac bundle $\uS\to M$ is a $\ZZ_2$-graded twisted spinor bundle $\slaS\otimes E=(\slaS^+\otimes E)\oplus(\slaS^-\otimes E)$ (where $E$ is a Hermitian vector bundle with a Hermitian connection), and $\uD$ is the $\ZZ_2$-graded twisted spin Dirac operator $\slaD_E=\slaD^+_E\oplus\slaD^-_E$. For an oriented hypersurface $\Sigma\subset M$ as in Theorem~\ref{T:codim 1 spf} (or for $\pM$ in the setting of Corollary~\ref{C:codim 1 spf}), let $\slaS_\Sigma$ denote the complex spinor bundle over $\Sigma$. Then $\slaS_\Sigma$ is isomorphic to both $\slaS^+_{|\Sigma}$ and $\slaS^-_{|\Sigma}$. In particular, the restriction of $\slaD_E$ to $\Sigma$ is isomorphic to two copies of $\slaD_{\Sigma,E_{|\Sigma}}$. 

In particular, for the case of Corollary~\ref{C:codim 1 spf}, our formula becomes
\[
\spf(\calD_{\Phi,\bfs},\rho)=2\spf(\slaD_{E_{|\p_+M}},\rho_{|\p_+M})=-2\spf(\slaD_{E_{|\p_-M}},\rho_{|\p_-M}).
\]
\end{remark}

\begin{remark}\label{R:Lip potential}
As mentioned earlier, we can include Lipschitz continuous Callias potentials to the discussion of the spectral flow formulas. To be precise, we allow the Callias potential $\Phi$ to be a Lipschitz continuous bundle endomorphism on a compact subset of $M$. In this case, one can always find a smooth Callias potential $\Phi'$ which coincides with $\Phi$ outside a compact subset. By Lemma~\ref{L:spf homotop inv}, the spectral flow remains fixed with $\Phi$ replaced by $\Phi'$. In other words, all the results in this section remain true for such potentials. The motivation of considering such kinds of Callias potentials is the distance-related estimates proposed in \cite{CeccZeid24GT}.
\end{remark}

\section{K-cowaist on odd-dimensional manifolds}\label{S:K-cw odd}

In this section, we introduce an intrinsic definition of K-cowaist on odd-dimensional manifolds. What we shall use is the odd Chern character of a smooth map from the manifold to a unitary group. One will see that our odd K-cowaist is a natural extension of the original (even) K-cowaist.

\subsection{K-cowaist on even-dimensional manifolds}\label{SS:K-cw even}

We review the notion of K-cowaist on even-dimensional manifolds. For details, we refer the readers to \cite{Gromov96,Min-Oo02,Gromov17-101,BaerHanke23,CeccZeid24GT,Gromov23Four,Wang23K-cw,BaerHanke24K-cw}, etc.

Let $M$ be an oriented closed Riemannian manifold of even dimension and $E\to M$ be a Hermitian vector bundle with a Hermitian connection $\nabla^E$. We say that $E$ has a non-trivial Chern number if there exist positive integers $\gamma_1,\cdots,\gamma_m$, such that
\[
\int_M\ch_{\gamma_1}(E)\wedge\cdots\wedge\ch_{\gamma_m}(E)\ne0,
\]
where $\ch_{\gamma_j}(E)$ denotes the $\gamma_j$-th Chern character form of $E$.
Equivalently, one can replace $\ch_{\gamma_j}(E)$ by the Chern class form $c_{\gamma_j}(E)$ of $E$. Let $R^E=(\nabla^E)^2$ be the curvature of the connection $\nabla^E$, with the norm given by
\[
\|R^E\|:=\sup_{p\in M}\sup_{|v\wedge w|=1}|R^E(v\wedge w)|,
\]
where $v,w\in T_pM$ and $|R^E(v\wedge w)|$ is the operator norm of $R^E(v\wedge w)\in\End(E_p)$.

\begin{definition}[{\cite[Section~4]{Gromov96}}]\label{D:K-cw even}
The \emph{K-cowaist} of an even-dimensional oriented closed Riemannian manifold $M$ is defined to be
\[
\Kcw(M):=\left(\inf\left\{\|R^E\|\;|\;E\mbox{ has a non-trivial Chern number}\right\}\right)^{-1}\in(0,\infty].
\]
\end{definition}

The notion of K-cowaist can also be set up on non-compact manifolds or manifolds with boundary. In these situations we call $E$ a \emph{compatible} bundle if $E$ is isomorphic to the trivial bundle with trivial connection at infinity and near the boundary.

\begin{definition}\label{D:K-cw compatible even}
Let $M$ be an oriented (not necessarily compact) Riemannian manifold of even dimension possibly with compact boundary. The \emph{K-cowaist} of $M$ is defined to be
\[
\Kcw(M):=(\inf\|R^E\|)^{-1}\in[0,\infty]
\]
with $E$ ranging over all compatible bundles having a non-trivial Chern number.
\end{definition}

\begin{remark}\label{R:K-cw compatible even}
Another notion, which is called K-cowaist$^+$ by Gromov in \cite{Gromov96,Gromov17-101}, is defined to be $\Kcw^+(M):=(\inf\|R^{E\oplus F}\|)^{-1}$ with $(E,F)$ ranging over all compatible pairs of bundles having a non-trivial relative Chern number. Here $(E,F)$ is called a \emph{compatible pair} if $E$ and $F$ are isomorphic (as two Hermitian vector bundles with Hermitian connections) at infinity and near the boundary. We say a compatible pair of bundles $(E,F)$ having a non-trivial relative Chern number if there exist positive integers $\gamma_1,\cdots,\gamma_m$, such that
\[
\int_M\left(\ch_{\gamma_1}(E)\wedge\cdots\wedge\ch_{\gamma_m}(E)-\ch_{\gamma_1}(F)\wedge\cdots\wedge \ch_{\gamma_m}(F)\right)\ne0.
\]
When $F$ is the trivially flat bundle, this reduces to the condition of Definition~\ref{D:K-cw compatible even}.
It is clear that $\Kcw^+(M)\ge\Kcw(M)$ and they are the same on closed manifolds. Gromov pointed out that $\Kcw^+(M)$ can be significantly greater than $\Kcw(M)$ for open manifolds $M$.
\end{remark}

\begin{remark}\label{R:K-cw metric}
The K-cowaist depends on the Riemannian metric. But on compact manifolds, the property of having infinite K-cowaist is independent of the metric.
\end{remark}

Using similar ideas, one can also talk about $\omega$-cowaist.

\begin{definition}[\cite{BaerHanke24K-cw}]\label{D:omega-cw even}
Let $M$ be an oriented (not necessarily compact) Riemannian manifold of even dimension $2m$ possibly with compact boundary, and $\omega=1+\omega_1+\cdots+\omega_m$ be a smooth differential form on $M$, where $\omega_j$ has degree $2j$. The \emph{$\omega$-cowaist} of $M$ is defined by
\[
\ocw(M):=\left(\inf\|R^E\|\right)^{-1}\in[0,\infty]
\]
with $E$ ranging over all compatible bundles such that
\[
\int_M\omega\wedge\left(\ch(E)-\rk(E)\right)\ne0.
\]

An important special case is $\omega$ being the $\hat{A}$-genus form of $M$, and we call it \emph{$\hat{A}$-cowaist}, denoted by $\hAcw(M)$.
\end{definition}

The K-cowaist and $\omega$-cowaist satisfy relationship stated in Theorem~\ref{IntroT:K-cw omega-cw}. And their role in studying PSC problems on spin manifolds is stated in Theorem~\ref{IntroT:K-cw psc even}.

\subsection{Chern--Simons forms and odd Chern characters}\label{SS:Chern-Simons}

In this subsection, we first review the notion of odd Chern characters, mainly according to \cite{Getzler93,Zhang01book}, and then extend the Adams operation to this case. This will be used in the discussion of odd K-cowaist later.

Let $M$ be an oriented closed manifold and $E\to M$ be a complex vector bundle with connection $\nabla$. Recall the Chern character form associated to $\nabla$ is the even-degree closed differential form defined by
\[
\ch(E,\nabla):=\tr\left(\exp\Big(\frac{\sqrt{-1}}{2\pi}\nabla^2\Big)\right).
\]
For two connections $\nabla_0,\nabla_1$ on $E$, their \emph{Chern--Simons transgressed form} associated to the Chern character is the odd-degree differential form
\[
\Tch(E,\nabla_0,\nabla_1)=-\int_0^1\tr\left(\frac{\sqrt{-1}}{2\pi}\dot{\nabla}_r\exp\Big(\frac{\sqrt{-1}}{2\pi}\nabla_r^2\Big)\right)\D r,
\]
where $\nabla_r=(1-r)\nabla_0+r\nabla_1$ and $\dot{\nabla}_r=\nabla_1-\nabla_0$. It satisfies the transgression formula (cf. \cite[Chapter~1]{Zhang01book})
\begin{equation}\label{E:transgression}
\ch(E,\nabla_0)-\ch(E,\nabla_1)=\D\Tch(E,\nabla_0,\nabla_1).
\end{equation}
If $\nabla_0$ and $\nabla_1$ are both flat connections, then $\Tch(E,\nabla_0,\nabla_1)$ is a closed form. Let $\Omega=\nabla_1-\nabla_0$. Then it can be derived like \cite[Section~1]{Getzler93} that
\begin{equation}\label{E:Chern-Simons}
\Tch(E,\nabla_0,\nabla_1)=\sum_{j=1}^\infty\Big(\frac{1}{2\pi\sqrt{-1}}\Big)^{j}\frac{(j-1)!}{(2j-1)!}\tr(\Omega^{2j-1}).
\end{equation}

Consider a special case that $E$ is the trivial bundle $M\times\CC^l$ with a trivial connection $\D$. Then any $\rho\in C^\infty(M,U(l))$ induces a family of connections
\begin{equation}\label{E:conn family}
\nabla_r:=\D+r\rho^{-1}(\D\rho),\quad r\in[0,1]
\end{equation}
on $E$. In this setting, the Chern--Simons form \eqref{E:Chern-Simons} is just the \emph{odd Chern character} form (cf. \cite[Chapter~1]{Zhang01book})
\[\label{E:odd chern ch}
\ch(\rho,\D)=\sum_{j=1}^\infty\ch_j(\rho,\D):=\sum_{j=1}^\infty\Big(\frac{1}{2\pi\sqrt{-1}}\Big)^{j}\frac{(j-1)!}{(2j-1)!}\tr\big((\rho^{-1}(\D\rho))^{2j-1}\big),
\]
where the $j$-th odd Chern character form $\ch_j(\rho,\D)$ is closed of degree $2j-1$. (We adopt the convention that $\ch_0(\rho,\D)=0$.) A straightforward computation yields that the curvature $R^{\nabla_r}$ of the connection $\nabla_r$ is $-r(1-r)(\rho^{-1}(\D\rho))^2$. We set the curvature associated to $\rho$ to be
\begin{equation}\label{E:curv rho}
R^\rho:=\frac{1}{4}(\rho^{-1}(\D\rho))^2.
\end{equation}
So that $\|R^{\nabla_r}\|\le\|R^\rho\|$ for any $r\in[0,1]$.

In the proof of Theorem~\ref{IntroT:K-cw omega-cw} for even-dimensional manifolds, an operation in the even K-group $K(M)$ called Adams operation is involved. Briefly speaking (cf. \cite{Adams62,Atiyah89K-book,Karoubi78K-book}), for a Hermitian vector bundle $E\to M$ with a Hermitian connection and an integer $k\ge0$, the $k$-th Adams operation $\psi_k(E)$ is a virtual bundle $\psi_k^+(E)-\psi_k^-(E)$ constructed from certain universal combinations of direct sums, tensor products, and exterior products of $E$. It has the property that
\begin{equation}\label{E:Adams op even}
\ch_j(\psi_k(E))=\ch_j(\psi_k^+(E))-\ch_j(\psi_k^-(E))=k^j\ch_j(E),
\end{equation}
and $\|R^{\psi_k^\pm(E)}\|\le c_k\|R^E\|$ for some constant $c_k$ depending only on $k$.

For the aforementioned case that $E=M\times\CC^l$ with a trivial connection $\D$, and $\rho\in C^\infty(M,U(l))$. Performing the $k$-th Adams operation to $E$ with the connection $\nabla_1=\D+\rho^{-1}(\D\rho)$, we get two bundles $\psi_k^\pm(E)$ with connections denoted by $\psi_k^\pm(\nabla_1)$. Since taking inverse or differential commutes with the operation of taking direct sum, tensor product or exterior product, one deduces that $\psi_k^\pm(\nabla_1)$ can be expressed in the form of
\begin{equation}\label{E:Adams op conn}
\psi_k^\pm(\nabla_1)=\tilde{\D}^\pm+(\tilde{\rho}^\pm)^{-1}(\tilde{\D}^\pm\tilde{\rho}^\pm),
\end{equation}
where $\tilde{\D}^\pm=\psi_k^\pm(\D)$ are the trivial connections on $\psi_k^\pm(E)$ corresponding to $\D$, and $\tilde{\rho}^\pm$ are smooth functions on $M$ with values in some unitary group.

\begin{definition}\label{D:Adams op odd}
We define the $k$-th Adams operation to $\rho\in C^\infty(M,U(l))$, denoted by $\psi_k(\rho)$, to be the virtual map $\psi_k^+(\rho)-\psi_k^-(\rho)$, where $\psi_k^\pm(\rho)$ are given by the maps $\tilde{\rho}^\pm$ in \eqref{E:Adams op conn}.
\end{definition}

\begin{remark}\label{R:Adams op odd Chern}
From \eqref{E:transgression} and \eqref{E:Adams op even}, we obtain that
\begin{equation}\label{E:Adams op odd}
\ch_j(\psi_k(\rho),\psi_k(\D))=\ch_j(\psi_k^+(\rho),\psi_k^+(\D))-\ch_j(\psi_k^-(\rho),\psi_k^-(\D))=k^j\ch_j(\rho,\D).
\end{equation}
It also holds that $\|R^{\psi_k^\pm(\rho)}\|\le c_k\|R^\rho\|$ for some constant $c_k$ depending only on $k$.
\end{remark}

\begin{remark}\label{R:Adams op odd-2}
Since an element of $K^{-1}(M)$ may be represented by $\rho\in C^\infty(M,U(l))$, Definition~\ref{D:Adams op odd} can be seen as a generalization of the Adams operation to the odd K-group $K^{-1}(M)$.
\end{remark}

\begin{remark}\label{R:odd Chern non-closed}
If the manifold $M$ is non-compact and/or with boundary, we consider a map $\rho\in C^\infty(M,U(l))$ which is locally constant at infinity and near the boundary. The above discussion is still valid.
\end{remark}

\subsection{K-cowaist on odd-dimensional manifolds}\label{SS:K-cw odd}

In this subsection, we extend the notion of K-cowaist and $\omega$-cowaist to odd-dimensional manifolds in an intrinsic way.

It is clear that any vector bundle over an odd-dimensional manifold $M$ has trivial Chern numbers. Therefore Definitions~\ref{D:K-cw even} and \ref{D:K-cw compatible even} do not make sense in odd-dimensional situation. Previously, this was settled by converting to even-dimensional case via taking product with another space. In literature, there are different conventions, so that the K-cowaist of $M$ might be defined as one of the following:
\begin{itemize}
\item $\Kcw(M\times\RR)$;
\item $\Kcw(M\times\rmS^1)$;
\item $\sup_{k}\Kcw(M\times\RR^k)$, where $k\ge1$ is odd;
\item $\sup_{k}\Kcw(M\times\rmT^k)$, where $k\ge1$ is odd.
\end{itemize}
It appears that these definitions are not equivalent (cf. \cite[Section~12]{Gromov17-101}). Here we give a more intrinsic definition using odd Chern characters.

A smooth map $\rho:M\to U(l)$ is called \emph{compatible} if it is trivial, i.e., locally constant at infinity and near the boundary. A compatible bundle $E\to M$ and a compatible map $\rho\in C^\infty(M,U(l))$ are called an \emph{admissible pair} if they have a non-trivial odd Chern number, i.e., if there exist non-negative integers $\gamma_1,\cdots,\gamma_m$ such that
\begin{equation}\label{E:admi pair}
\int_M\ch_{\gamma_1}(E)\wedge\cdots\wedge\ch_{\gamma_{m-1}}(E)\wedge\ch_{\gamma_m}(\rho)\ne0.
\end{equation}
And they are called an \emph{$\omega$-admissible pair} if
\begin{equation}\label{E:omega admi pair}
\int_M\omega\wedge\left(\ch(E)-\rk(E)\right)\wedge\ch(\rho)\ne0,
\end{equation}
where $\omega$ is a mixed differential form of even degree. Here we omit the trivial connection $\D$ in the odd Chern character form as its cohomology class is independent of $\D$ (cf. \cite[Section~1.8]{Zhang01book}).

\begin{definition}\label{D:K-cw odd}
Let $M$ be an oriented (not necessarily compact) Riemannian manifold of odd dimension $2m-1$ possibly with compact boundary. The \emph{K-cowaist} of $M$ is defined to be
\[
\Kcw(M):=\left(\inf\{\|R^E\|+\|R^\rho\|\}\right)^{-1}\in[0,\infty]
\]
with $(E;\rho)$ ranging over all admissible pairs. (Recall the curvature $R^\rho$ is defined in \eqref{E:curv rho}.)

Similarly, let $\omega=1+\omega_1+\cdots+\omega_{m-1}$ be a smooth differential form on $M$, where $\omega_j$ has degree $2j$. The \emph{$\omega$-cowaist} of $M$ is defined by
\[
\ocw(M):=\left(\inf\{\|R^E\|+\|R^\rho\|\}\right)^{-1}\in[0,\infty]
\]
with $(E;\rho)$ ranging over all $\omega$-admissible pairs.
\end{definition}

\begin{remark}\label{R:admi pair}
Note that the appearance of terms involving $\rho$ makes the left-hand sides of \eqref{E:admi pair} and \eqref{E:omega admi pair} vanish on even-dimensional manifolds. Therefore $\rho$ is not needed in Definitions~\ref{D:K-cw compatible even} and \ref{D:omega-cw even}.
\end{remark}

\begin{remark}\label{R:K-cw odd}
While our definition of odd K-cowaist is directly on $M$, there is a way to relate it to $M\times\rmS^1$. Basically, given an admissible pair $(E;\rho)$ over $M$, one has the pullback bundle $E\otimes\CC^l$ over $M\times[0,1]$. Multiplication by $\rho$ induces an identification of the restrictions of $E\otimes\CC^l$ to the endpoints of the interval, so that we get a vector bundle $\tilE\to M\times\rmS^1$. Consider such bundles $\tilE$ with connections (and only such connections) of the form
\[
\nabla^{\tilE}_{|M\times\{r\}}=\nabla^E\otimes\id+\id\otimes(\D+r\rho^{-1}[\D,\rho])
\]
as in Remark~\ref{R:family conn}. Then $R^{\tilE}=R^E-4r(1-r)R^\rho$. Also one can check that \eqref{E:admi pair} implies that $\tilE$ has a non-trivial Chern number. Hence $\Kcw(M)\le\Kcw(M\times\rmS^1)$.
\end{remark}

Theorem~\ref{IntroT:K-cw omega-cw} can be established under the above definition.

\begin{theorem}\label{T:K-cw omega-cw odd}
Let $M$ be an oriented Riemannian manifold of odd dimension $2m-1$ possibly with compact boundary. Let $\omega=1+\omega_1+\cdots+\omega_{m-1}$ be a smooth differential form on $M$, where $\omega_j$ has degree $2j$.
There exists a constant $c(m)$ depending only on $m$ such that $\Kcw(M)\le c(m)\cdot\ocw(M)$.

In particular, if $M$ has infinite K-cowaist, then it has infinite $\omega$-cowaist for any $\omega$.
\end{theorem}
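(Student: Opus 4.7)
The inequality $\Kcw(M)\le c(m)\ocw(M)$ rearranges to $\inf_{\omega\text{-adm}}(\|R^E\|+\|R^\rho\|)\le c(m)\cdot\inf_{\text{adm}}(\|R^{E'}\|+\|R^{\rho'}\|)$, so it suffices to manufacture, from any admissible pair $(E';\rho')$, an $\omega$-admissible pair $(E;\rho)$ with $\|R^E\|+\|R^\rho\|\le c(m)(\|R^{E'}\|+\|R^{\rho'}\|)$. The plan is to imitate the B\"ar--Hanke argument from the even-dimensional case, combining the classical bundle Adams operations with the odd Adams operations of Definition~\ref{D:Adams op odd}. Fix a tuple $(\gamma_1,\dots,\gamma_m)$ with $\sum_i\gamma_i=m$ witnessing admissibility of $(E';\rho')$, i.e.\ $\int_M\ch_{\gamma_1}(E')\wedge\cdots\wedge\ch_{\gamma_{m-1}}(E')\wedge\ch_{\gamma_m}(\rho')\ne0$.

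For each $(\vec k,l)=(k_1,\dots,k_{m-1},l)\in\mathbb{Z}_{>0}^{m}$, I would form the (virtual) pair
\[
V_{\vec k}:=\psi_{k_1}(E')\otimes\cdots\otimes\psi_{k_{m-1}}(E'),\qquad \rho_l:=\psi_l(\rho').
\]
Multiplicativity of $\ch$ under tensor products, together with $\ch_j(\psi_kE')=k^j\ch_j(E')$ from \eqref{E:Adams op even} and its odd counterpart \eqref{E:Adams op odd}, shows that
\[
P(\vec k,l)\;:=\;\int_M\omega\wedge(\ch(V_{\vec k})-\rk V_{\vec k})\wedge\ch(\rho_l)
\]
is a polynomial in $(\vec k,l)$ of degree at most $m$ in each variable. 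Its coefficient at the monomial $k_1^{\gamma_1}\cdots k_{m-1}^{\gamma_{m-1}}l^{\gamma_m}$ is precisely $\int_M\ch_{\gamma_1}(E')\wedge\cdots\wedge\ch_{\gamma_m}(\rho')$: the wedge $\ch_{\gamma_1}(E')\wedge\cdots\wedge\ch_{\gamma_m}(\rho')$ already has top degree $2m-1=\dim M$, forcing the only surviving $\omega$-contribution to be the unit component $\omega_0=1$, and admissibility renders this integral nonzero.

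Hence $P$ is a nonzero polynomial, and an iterated Vandermonde (or Schwartz--Zippel) argument yields positive integers $(\vec k_0,l_0)\in\{1,\dots,m+1\}^{m}$ with $P(\vec k_0,l_0)\ne0$. Writing the virtual pair $(V_{\vec k_0};\rho_{l_0})$ as a signed combination of honest Hermitian bundles and unitary maps via the $\psi_k^{\pm}$ presentations of Definition~\ref{D:Adams op odd}, and expanding $P(\vec k_0,l_0)$ multilinearly, at least one resulting term must be nonzero; this provides an honest $\omega$-admissible pair $(E;\rho)$. Curvature control comes from $\|R^{\psi_k^{\pm}(E')}\|\le c_k\|R^{E'}\|$, $\|R^{\psi_l^{\pm}(\rho')}\|\le c_l\|R^{\rho'}\|$ (Remark~\ref{R:Adams op odd Chern}), together with the tensor-product estimate $\|R^{A\otimes B}\|\le\|R^A\|+\|R^B\|$; since the parameters $k_i^{(0)},l_0$ are bounded by $m+1$, the resulting constant depends only on $m$. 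The main obstacle is the combinatorial step of pinpointing the monomial whose coefficient in $P$ is precisely the admissibility integral and controlling the size of $(\vec k_0,l_0)$ by a function of $m$ alone; the remaining steps are routine bookkeeping through the Adams-operation and tensor-product formalism.
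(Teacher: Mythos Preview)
Your proposal is essentially the paper's own argument: take the tensor product of Adams operations $\psi_{k_i}(E')$ together with the odd Adams operation $\psi_l(\rho')$, observe that the $\omega$-integral is a polynomial in $(k_1,\dots,k_{m-1},l)$ whose top-order coefficients are the admissibility integrals, evaluate on a bounded grid to find a nonzero value, and then split the virtual pair into honest summands to extract an $\omega$-admissible pair with controlled curvature. The only cosmetic differences are that the paper uses the grid $\{0,1,\dots,m\}^m$ and states the bound as \emph{total} degree $\le m$ (rather than degree $\le m$ in each variable), and it phrases nonvanishing via the whole top-degree part rather than singling out one monomial.
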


The idea of the proof is from \cite{BaerHanke24K-cw}. For completeness, we include it below.

\begin{proof}
If there are no admissible pairs over $M$, then $\Kcw(M)=0$. The theorem holds automatically.

Otherwise, we first show a claim.

\textbf{Claim.} Let $(E;\rho)$ be an admissible pair over $M$. Then there always exists an $\omega$-admissible pair $(E';\rho')$ constructed from $(E;\rho)$.

For a multi-index $k=(k_1,\cdots,k_{m-1})$ of non-negative integers and a non-negative integer $k_m$,  rewrite
\[
\psi_{k}(E):=\psi_{k_1}(E)\otimes\cdots\otimes\psi_{k_{m-1}}(E)\quad\mbox{and}\quad\psi_{k_m}(\rho)
\]
as differences of honest bundles/maps
\[
\begin{aligned}
\psi_{k}(E)& =\bigoplus_{\#\{-\}={\rm even}}\psi_{k_1}^\pm(E)\otimes\cdots\otimes\psi_{k_{m-1}}^\pm(E)-\bigoplus_{\#\{-\}={\rm odd}}\psi_{k_1}^\pm(E)\otimes\cdots\otimes\psi_{k_{m-1}}^\pm(E) \\
& =:\psi_{k}^+(E)-\psi_{k}^-(E), \\
\psi_{k_m}(\rho)& =\psi_{k_m}^+(\rho)-\psi_{k_m}^-(\rho).
\end{aligned}
\]

Put
\[
P(k,k_m):=\int_M\omega\wedge(\ch(\psi_{k}(E))-\rk(\psi_{k}(E)))\wedge\ch(\psi_{k_m}(\rho)).
\]
Extending $\omega=1+\omega_1+\cdots+\omega_{m-1}$ and using \eqref{E:Adams op even}, \eqref{E:Adams op odd}, we get
\[
P(k,k_m)=\sum_{\gamma_1+\cdots+\gamma_m=m}k_1^{\gamma_1}\cdots k_m^{\gamma_m}\ch_{\gamma_1}(E)\wedge\cdots\wedge\ch_{\gamma_{m-1}}(E)\wedge\ch_{\gamma_m}(\rho)+{\rm l.o.t.}
\]
where ``l.o.t.'' stands for terms of lower total order in $k_1,\cdots,k_m$. So $P$ is a polynomial in $k_1,\cdots,k_m$ of total degree at most $m$.

If $P(k,k_m)$ vanishes for all $(k_1,\cdots,k_m)\in\{0,1,\cdots,m\}^m$, then $P$ would vanish as a polynomial, so
\[
\int_M\ch_{\gamma_1}(E)\wedge\cdots\wedge\ch_{\gamma_{m-1}}(E)\wedge\ch_{\gamma_m}(\rho)=0
\]
for all non-negative integers $\gamma_j$ with $\gamma_1+\cdots+\gamma_m=m$, contradicting the fact that $(E;\rho)$ is an admissible pair. Hence we can choose $(k_1,\cdots,k_m)\in\{0,1,\cdots,m\}^m$ such that $P(k,k_m)\ne0$, which means
\[
\begin{aligned}
0\ne& \int_M\omega\wedge(\ch(\psi_{k}(E))-\rk(\psi_{k}(E)))\wedge\ch(\psi_{k_m}(\rho)) \\
=& \int_M\omega\wedge(\ch(\psi_{k}^+(E))-\rk(\psi_{k}^+(E)))\wedge\ch(\psi_{k_m}^+(\rho)) \\
& +\int_M\omega\wedge(\ch(\psi_{k}^-(E))-\rk(\psi_{k}^-(E)))\wedge\ch(\psi_{k_m}^-(\rho)) \\
& -\int_M\omega\wedge(\ch(\psi_{k}^+(E))-\rk(\psi_{k}^+(E)))\wedge\ch(\psi_{k_m}^-(\rho)) \\
& -\int_M\omega\wedge(\ch(\psi_{k}^-(E))-\rk(\psi_{k}^-(E)))\wedge\ch(\psi_{k_m}^+(\rho)).
\end{aligned}
\]
In particular, we can find a compatible bundle $E'$ and a compatible map $\rho'$ which satisfy \eqref{E:omega admi pair}. This proves the Claim.

Since $(k_1,\cdots,k_m)$ can only achieve finitely many values, we have
\[
\|R^{E'}\|+\|R^{\rho'}\|\le c'(m)\left(\|R^{E}\|+\|R^{\rho}\|\right)
\]
for some constant $c'(m)$. The theorem now follows from taking infimum over all admissible pairs $(E;\rho)$.
\end{proof}

\subsection{K-cowaist of finite coverings}\label{SS:K-cw covering}

An important property of K-cowaist is that it is preserved under finite coverings.

\begin{proposition}\label{P:K-cw covering}
Let $M$ be an odd-dimensional oriented complete Riemannian manifold possibly with compact boundary and $\pi:\widetilde{M}\to M$ be a finite covering of $M$ with lifted metric. Then $\Kcw(M)=\Kcw(\widetilde{M})$. The same conclusion holds for the $\omega$-cowaist.
\end{proposition}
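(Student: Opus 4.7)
The plan is to prove both inequalities $\Kcw(\widetilde M)\ge\Kcw(M)$ and $\Kcw(M)\ge\Kcw(\widetilde M)$ separately, by pullback and transfer constructions respectively; the argument will apply verbatim to $\ocw$ after replacing $\omega$ by $\pi^*\omega$ when passing to $\widetilde M$.

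For $\Kcw(\widetilde M)\ge\Kcw(M)$, I would start from an admissible pair $(E;\rho)$ on $M$ and form $\widetilde E:=\pi^*E$ with the pulled-back Hermitian structure and $\widetilde\rho:=\rho\circ\pi$. The finiteness of the cover keeps $(\widetilde E;\widetilde\rho)$ compatible at infinity and near the boundary; the local-isometry property of $\pi$ gives $\|R^{\widetilde E}\|=\|R^E\|$ and $\|R^{\widetilde\rho}\|=\|R^\rho\|$; and naturality of the odd Chern character forms under pullback together with $\int_{\widetilde M}\pi^*\alpha=\deg(\pi)\int_M\alpha$ for top-degree forms preserves the non-vanishing of \eqref{E:admi pair}. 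So $(\widetilde E;\widetilde\rho)$ is admissible on $\widetilde M$ with the same curvature sum, giving the first inequality.

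For the reverse inequality I would use a pushforward transfer. Given $(\widetilde E;\widetilde\rho)$ admissible on $\widetilde M$, set $E:=\pi_*\widetilde E$ with its inherited metric and connection; over any evenly covered $U\subset M$ with $\pi^{-1}(U)=\bigsqcup_{i=1}^k U_i$, the local decomposition $E|_U\cong\bigoplus_i(\pi|_{U_i})_*\widetilde E|_{U_i}$ together with $\pi$ being a local isometry gives $\|R^E\|=\|R^{\widetilde E}\|$ and shows that $E$ is compatible at infinity and near the boundary (the cover is trivial there). To transfer $\widetilde\rho$, I would first realize it as a unitary automorphism $\pi_*\widetilde\rho$ of the flat Hermitian bundle $V:=\pi_*\underline{\CC^l}$ on $M$, then choose an isometric embedding $V\hookrightarrow\underline{\CC^N}$ agreeing with the canonical trivialization at infinity and near the boundary, and extend by the identity on $V^\perp$ to obtain a compatible map $\rho\in C^\infty(M,U(N))$. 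A local computation in an adapted frame, combined with the cohomological invariance of the odd Chern character under smooth gauge transformations, yields
\[
\int_M\ch_{\gamma_1}(E)\wedge\cdots\wedge\ch_{\gamma_{m-1}}(E)\wedge\ch_{\gamma_m}(\rho)=\int_{\widetilde M}\ch_{\gamma_1}(\widetilde E)\wedge\cdots\wedge\ch_{\gamma_m}(\widetilde\rho)\ne0,
\]
so $(E;\rho)$ is admissible on $M$.

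The hard part will be controlling $\|R^\rho\|$, since the smooth gauge relating the standard frame of $\underline{\CC^N}$ to the frame adapted to $V\oplus V^\perp$ adds a term to $\rho^{-1}d\rho$ that is independent of $\widetilde\rho$, so a naive pointwise bound does not close. I plan to manage this by combining the freedom in the choice of embedding with the odd Adams operation from Definition~\ref{D:Adams op odd}: applying $\psi_k$ to $\widetilde\rho$ scales the relevant odd Chern numbers by $k^{\gamma_m}$ while inflating curvature by a factor depending only on $k$, so that after passing to the infimum in Definition~\ref{D:K-cw odd} the gauge-induced contribution becomes negligible relative to $\|R^{\widetilde\rho}\|$, and the bound $\Kcw(M)\ge\Kcw(\widetilde M)$ follows. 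Combining the two directions yields the claimed equality, and the $\omega$-cowaist statement is handled by inserting the wedge factor $\omega$ on $M$ (respectively $\pi^*\omega$ on $\widetilde M$) throughout the same argument.
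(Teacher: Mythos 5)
Your pullback direction coincides with the paper's and is fine; the trouble is in the transfer direction $\Kcw(M)\ge\Kcw(\widetilde{M})$. The paper transfers an admissible pair $(\widetilde{E};\widetilde{\rho})$ by the fiberwise direct sum over the sheets, $E_p=\bigoplus_{\tilde p\in\pi^{-1}(p)}\widetilde{E}_{\tilde p}$ and $\rho_p=\bigoplus_{\tilde p\in\pi^{-1}(p)}\widetilde{\rho}_{\tilde p}$: over an evenly covered chart this is block diagonal in a locally flat frame, different charts differ by locally constant permutation conjugation, and since $R^\rho$ and the forms $\tr\big((\rho^{-1}\D\rho)^{2j-1}\big)$ are invariant under such conjugation, the curvature norms are preserved exactly, $\|R^E\|=\|R^{\widetilde{E}}\|$ and $\|R^\rho\|=\|R^{\widetilde{\rho}}\|$, which is what the equality of the two infima requires. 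You instead route the transfer of $\widetilde{\rho}$ through a global isometric embedding of the flat bundle $V=\pi_*(\widetilde{M}\times\CC^l)$ into a trivial bundle, extended by the identity. As you yourself observe, this puts into $\rho^{-1}\D\rho$ a term coming from the non-parallel embedding, i.e.\ an additive contribution bounded below by a constant $C_0>0$ depending on the covering and the embedding but not on $\widetilde{\rho}$. Your proposed cure by the odd Adams operation cannot close this gap: by Remark~\ref{R:Adams op odd Chern}, $\psi_k$ multiplies $\|R^{\widetilde{\rho}}\|$ by a factor $c_k\ge1$ and rescales the odd Chern numbers by $k^j$, but admissibility is only a non-vanishing condition and the K-cowaist of Definition~\ref{D:K-cw odd} involves only the curvature infimum, so enlarging Chern numbers buys nothing and multiplying the curvature cannot make a fixed additive error negligible. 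In the regime that matters (admissible pairs on $\widetilde{M}$ with $\|R^{\widetilde{E}}\|+\|R^{\widetilde{\rho}}\|$ tending to the infimum, possibly to $0$ when $\Kcw(\widetilde{M})=\infty$), your transferred pairs have curvature bounded below by roughly $C_0$, so at best you obtain a lower bound of order $C_0^{-1}$ for $\Kcw(M)$, not $\Kcw(M)\ge\Kcw(\widetilde{M})$; in particular infiniteness of the K-cowaist would not descend. The remedy is to avoid the global trivialization altogether, as the paper does.

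A secondary inaccuracy: the identity you assert, $\int_M\ch_{\gamma_1}(E)\wedge\cdots\wedge\ch_{\gamma_{m-1}}(E)\wedge\ch_{\gamma_m}(\rho)=\int_{\widetilde{M}}\ch_{\gamma_1}(\widetilde{E})\wedge\cdots\wedge\ch_{\gamma_m}(\widetilde{\rho})$, is false in general for the transferred pair. The transferred forms are fiberwise sums over the sheets ($\ch(E)=\pi_*\ch(\widetilde{E})$ and likewise for $\rho$), and the wedge product of pushforwards is not the pushforward of the wedge product: cross terms between different sheets appear as soon as two of the factors have positive degree. The equality only holds for tuples in which all $E$-factors enter through $\ch_0$. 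What is actually needed, and what the paper claims, is merely that the transferred pair has \emph{some} non-vanishing odd Chern number; if you want to argue through explicit integrals you must either restrict to such tuples or account for the cross terms, rather than quote the equality as written.
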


\begin{proof}
We first show that $\Kcw(M)\ge\Kcw(\widetilde{M})$. To this end, we can assume $\Kcw(\widetilde{M})>0$. Then there exists an admissible pair $(\tilE,\tilde{\rho})$ over $\widetilde{M}$. Construct a vector bundle $E\to M$ and a map $\rho\in C^\infty(M,U(l))$ such that
\[
E_p=\bigoplus_{\tilde{p}\in\pi^{-1}(p)}\tilE_{\tilde{p}},\qquad
\rho_p=\bigoplus_{\tilde{p}\in\pi^{-1}(p)}\tilde{\rho}_{\tilde{p}}.
\]
It is easy to see that $(E;\rho)$ is also an admissible pair over $M$. Note that $\|R^E\|=\|R^{\tilE}\|$, $\|R^\rho\|=\|R^{\tilde{\rho}}\|$. Taking infimum over all admissible pairs over $M$ (resp. $\widetilde{M}$), we obtain
\[
\inf\{\|R^E\|+\|R^\rho\|\}\le\inf\{\|R^{\tilE}\|+\|R^{\tilde{\rho}}\|\}.
\]
The inequality for K-cowaists then follows from taking reciprocals.

For the other direction, we can assume that $\Kcw(M)>0$. In this case it is clear that for each admissible pair $(E;\rho)$ over $M$, the pull-back $(\pi^*(E),\pi^*(\rho))$ is an admissible pair over $\widetilde{M}$. So the opposite inequality can be derived similarly.
\end{proof}

\subsection{K-cowaist subject to a subset}\label{SS:K-cw subset}

In this subsection, we introduce a notion that will be used in next section. Let $M$ be an odd-dimensional oriented Riemannian manifold possibly with compact boundary. For a compact subset $K\Subset M^\circ$, we can consider all admissible pairs $(E;\rho)$ that are trivial outside $K$. In this case, $(E;\rho)$ is called an admissible pair with support $K$.

\begin{definition}\label{D:K-cw subset}
For a compact subset $K\Subset M^\circ$, the \emph{K-cowaist of $M$ subject to $K$} is defined to be
\[
\Kcw(M|K):=\left(\inf\{\|R^E\|+\|R^\rho\|\}\right)^{-1}\in[0,\infty]
\]
with $(E;\rho)$ ranging over all admissible pairs with support $K$. In a similar manner, we can define the $\omega$-cowaist of $M$ subject to $K$, denoted by $\ocw(M|K)$.
\end{definition}

\begin{remark}\label{R:K-cw subset}
This definition is valid for even-dimensional manifolds as well. It is clear that $\Kcw(M)\ge\Kcw(M|K)$ and they are equal when $M$ is closed and $K=M$. In general,
\[
\Kcw(M)=\sup_{K\Subset M^\circ}\Kcw(M|K).
\]
The same is true for $\omega$-cowaist.
\end{remark}

\section{Odd K-cowaist and positive scalar curvature}\label{S:odd K-cw psc}

To give an impression on the validity and advantage of our definition for the odd K-cowaist and $\omega$-cowaist, we investigate the interaction between $\hat{A}$-cowaist and scalar curvature on odd-dimensional spin manifolds in this section.

The main tool on a complete Riemannian spin manifold $(M,g)$ is the spin Dirac operator $\slaD$ on the complex spinor bundle $\slaS\to M$, or more generally, the twisted spin Dirac operator $\slaD_E$ on the twisted spinor bundle $\slaS\otimes E\to M$ for some Hermitian vector bundle $E$ with a Hermitian connection. What plays a key role is the Lichnerowicz formula
\[
\slaD_E^2=(\nabla^{\slaS\otimes E})^*\nabla^{\slaS\otimes E}+\frac{1}{4}\scal_g+\calR^E,
\]
where $\calR^E=\rmc(R^E):=\sum_{i<j}\rmc(e_i^*)\rmc(e_j^*)R^E(e_i,e_j)$ is a curvature endomorphism of the curvature $R^E$ of $E$. Moreover, if $\slaD_E(r)$ ($r\in[0,1]$) is a family of twisted spin Dirac operators induced by $\rho\in C^\infty(M,U(l))$ (cf. Remark~\ref{R:family conn}), then
\begin{equation}\label{E:Lich for}
\slaD_E^2(r)=(\nabla^*\nabla)(r)+\frac{1}{4}\scal_g+\calR^E+\calR^\rho(r),
\end{equation}
where $\calR^\rho(r)=-4r(1-r)\calR^\rho=-4r(1-r)\sum_{i<j}\rmc(e_i^*)\rmc(e_j^*)R^\rho(e_i,e_j)$, with $R^\rho$ defined by \eqref{E:curv rho}.

\subsection{Upper bound of K-cowaist on manifolds with PSC}\label{SS:up bound K-cw}

We prove the odd-dimensional analogue of Theorem~\ref{IntroT:K-cw psc even}.

\begin{theorem}\label{T:K-cw psc odd}
Let $(M,g)$ be an $n$-dimensional ($n\ge3$ odd) complete Riemannian spin manifold with (possibly empty) compact mean convex (i.e., mean non-negative) boundary. If $\scal_g\ge\sigma^2n(n-1)$ for some $\sigma>0$, then
\begin{equation}\label{E:hA-cw psc odd}
\hAcw(M)\le2\sigma^{-2},
\end{equation}
and therefore $\Kcw(M)\le C(n)\sigma^{-2}$ for some constant $C(n)$ depending only on $n$.

In particular, if $M$ has infinite K-cowaist, then it cannot admit a complete Riemannian metric of uniformly positive scalar curvature and non-negative mean curvature along the boundary.
\end{theorem}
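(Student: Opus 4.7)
The plan is to extend the proof of Theorem~\ref{IntroT:K-cw psc even} to odd dimensions, replacing the relative index theorem by the codimension zero spectral flow formula (Theorem~\ref{T:codim 0 spf} when $\pM=\emptyset$, or Corollary~\ref{IntroC:codim 0 spf} when $M$ is compact with boundary) and replacing the Chern character by its odd analogue. It suffices to prove the $\hatA$-cowaist bound \eqref{E:hA-cw psc odd}, since then Theorem~\ref{T:K-cw omega-cw odd} applied with $\omega=\hatA(M)$ immediately yields $\Kcw(M)\leq C(n)\sigma^{-2}$ for some dimensional constant $C(n)$.

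Argue by contradiction: suppose $(E;\rho)$ is an $\hatA$-admissible pair on $M$ with $\|R^E\|+\|R^\rho\|<\sigma^2/2$. Since $E$ is compatible, let $F=M\times\CC^{\rank(E)}$ carry the trivial flat connection; then $(E,F)$ is a Gromov--Lawson pair (Assumption~\ref{A:GL pair}). Form the twisted relative Dirac bundle $S=\slaS\otimes(E\oplus F)$ as in Subsection~\ref{SS:codim 0}, and pick a Callias potential $\Phi=\phi\cdot\id_S$, where $\phi\in C^\infty(M)$ vanishes on a Gromov--Lawson support $K$, equals a small positive constant $c_0$ outside a compact set $M'\Subset M^\circ$ containing $K$, and interpolates smoothly so that $|\D\phi|$ is arbitrarily small throughout $M'\setminus K$.

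Applying the appropriate codimension zero spectral flow formula (with admissible boundary involution $\bfs=\id$ if $\pM\neq\emptyset$) yields
\[
\spf(\calD_{\Phi,\id},\rho\oplus\rho)=\spf(D_{V(E,F)},\tilde\rho),
\]
where $\tilde\rho$ on the closed double $\double M'$ is the gluing of $\rho_{|M'}$ with itself. Getzler's heat-kernel formula \cite[Corollary~2.7]{Getzler93} evaluates the right-hand side as $\int_{\double M'}\hatA(\double M')\wedge\ch(V(E,F))\wedge\ch(\tilde\rho)$. Writing $\ch(V(E,F))=(\ch(V(E,F))-\rank V)+\rank V$: the first piece is supported on the $M'$ copy since $V(E,F)=F$ is trivial on $M'^-$, and accounting for the orientation reversal on $M'^-$ it contributes $\int_M\hatA(M)\wedge(\ch(E)-\rank(E))\wedge\ch(\rho)$; the second piece vanishes by the orientation-reversing swap symmetry of $\double M'$ that fixes $\tilde\rho$, exactly as in the proof of Theorem~\ref{T:codim 0 spf}. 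The total is nonzero by $\hatA$-admissibility of $(E;\rho)$.

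For the contradiction, apply the Bochner--Lichnerowicz identity \eqref{E:Callias square int-1} to the family $\calD_\Phi(r)$. The curvature contribution
\[
\tfrac14\scal_g+\calR^E+\calR^\rho(r)\geq\tfrac{n(n-1)}{4}\sigma^2-\tfrac{n(n-1)}{2}(\|R^E\|+\|R^\rho\|)=:\varepsilon_0>0
\]
is uniform in $r\in[0,1]$ by hypothesis; the potential contribution satisfies $\Phi^2+[D,\Phi]\theta\geq\phi^2-|\D\phi|\geq-\varepsilon_0/2$ by the choice of $\phi$; and the boundary contribution $\bfs\Phi+\tfrac{n-1}{2}H_g\geq c_0>0$ since $H_g\geq0$. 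Consequently each $\calD_{\Phi,\id}(r)$ is uniformly invertible on its domain, forcing $\spf(\calD_{\Phi,\id},\rho\oplus\rho)=0$, contradicting the nonvanishing above. The hard part is the cancellation of the $\rank V$ piece on $\double M'$, which reduces the spectral flow invariant to the intrinsic $\hatA$-Chern integrand on $M$; once this identification is in hand, the Lichnerowicz argument on the Callias family parallels the even-dimensional proof.
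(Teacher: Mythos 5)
Your proposal is correct and follows essentially the same route as the paper: reduce to the $\hat{A}$-cowaist bound via Theorem~\ref{T:K-cw omega-cw odd}, turn an $\hat{A}$-admissible pair $(E;\rho)$ into a Gromov--Lawson pair with a trivial flat bundle, show $\spf(\calD_{\Phi,\id},\rho\oplus\rho)=\int_M\hat{A}(M)(\ch(E)-\rk(E))\ch(\rho)\neq0$ via the codimension zero formula and Getzler's cohomological formula, and then contradict this with the Lichnerowicz estimate for the whole family when $\|R^E\|+\|R^\rho\|<\sigma^2/2$ and the potential is small. The only cosmetic difference is that for non-compact $M$ with $\pM\neq\emptyset$ neither quoted formula applies verbatim; one first splits off a compact piece using Theorem~\ref{T:splitting spf} and Proposition~\ref{P:spf vanishing} (with $\bfs=\id$ and $\Phi>0$ near $\pM$) and then applies Corollary~\ref{IntroC:codim 0 spf}, which is exactly what the paper does and what your claimed identity amounts to.
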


\begin{proof}
We prove for the case that $\pM\ne\emptyset$, otherwise it is simpler. By Theorem~\ref{T:K-cw omega-cw odd}, we only need to show \eqref{E:hA-cw psc odd}.

If there exist no $\hat{A}$-admissible pairs over $M$, then $\hAcw(M)=0$, and the theorem is immediate.

If there does exist an $\hat{A}$-admissible pair over $M$, then there exists a compact subset $K\Subset M^\circ$ such that outside $K$, $E$ is isomorphic to the trivial bundle with trivial connection, and $\rho$ is locally constant. Let $E_0\to M$ denote the trivial flat bundle which is isomorphic to $E$ outside $K$ (so that $\ch(E_0)=\rk(E)$). Then $(E,E_0)$ forms a Gromov--Lawson pair with support $K$ in the sense of Assumption~\ref{A:GL pair}.

As in Subsection~\ref{SS:codim 0}, let $D=\slaD_E\oplus-\slaD_{E_0}$ be the Dirac operator on the twisted spinor bundle $S=(\slaS\otimes E)\oplus(\slaS\otimes E_0)$. Choose a compact subset $M'\Subset M$ with smooth boundary such that $M'$ contains $\pM$ and an open neighborhood of $K$. Consider the Callias operator $\calD_\Phi=D+\Phi\theta$ with the Callias potential $\Phi$ being a non-negative function such that $\Phi=0$ on $K$ and $\Phi=\delta$ near $\pM$ and in a neighborhood of $M\setminus M'$ for some constant $\delta>0$. As in the proof of Theorem~\ref{T:codim 0 spf},
\[
\spf(\calD_{\Phi,\id},\rho\oplus\rho)=\spf(\calD^{M'}_{\Phi,\id},\rho_{|M'}\oplus\rho_{|M'}).
\]
The right-hand side denotes the spectral flow corresponding to the restrictions of everything to $M'$.

Now $M'$ is a compact manifold, we can apply Corollary~\ref{IntroC:codim 0 spf} to obtain
\[
\spf(\calD^{M'}_{\Phi,\id},\rho_{|M'}\oplus\rho_{|M'})=\spf(D_{V(E,E_0)},\tilde{\rho}),
\]
where the right-hand side is a spectral flow on the closed manifold $\double M'=M'\cup_{\pM'}M'^-$, and $\tilde{\rho}$ is the gluing of $\rho_{|M'}$ and $\rho_{|M'}$. By the cohomological formula for spectral flow (cf. \cite[Theorem~2.8]{Getzler93}),
\[
\begin{aligned}
\spf(D_{V(E,E_0)},\tilde{\rho})& =\int_{\double M'}\hat{A}(\double M')\ch(V(E,E_0))\ch(\tilde{\rho}) \\
& =\int_M\hat{A}(M)(\ch(E)-\rk(E))\ch(\rho)\ne0,
\end{aligned}
\]
where the second equality is because of the opposite orientation on $M'^-$ and the triviality of $\rho$ outside $M'$, and the inequality is because of \eqref{E:omega admi pair}. In summary, we have shown that $\spf(\calD_{\Phi,\id},\rho\oplus\rho)\ne0$.

Consider
\[
\calD_\Phi(r)=(1-r)\calD_\Phi+r(\rho\oplus\rho)^{-1}\calD_\Phi(\rho\oplus\rho).
\]
For $u\in\dom(\calD_{\Phi,\id}(r))$, noticing that $\bfs\Phi>0$ on $\pM$, combined with \eqref{E:Callias square int-1}, \eqref{E:Lich for} and the hypothesis that $H_g\ge0$, we have
\[
\begin{aligned}
\int_M|\calD_\Phi(r)u|^2\D V_M=& \int_M\left(|\nabla(r)u|^2+\langle u,\Theta(r)u\rangle\right)\D V_M \\
& +\int_\pM\left\langle u,\left(\bfs\Phi+\frac{n-1}{2}H_g\right)u\right\rangle\D V_\pM \\
\ge& \int_M\langle u,\Theta(r)u\rangle\D V_M, 
\end{aligned}
\]
provided that the self-adjoint bundle endomorphism $\Theta(r)$ given by
\[
\begin{aligned}
\Theta(r):=& \frac{1}{4}\scal_g+\calR^{E\oplus E_0}(r)+\Phi^2+\rmc(\D\Phi)\theta \\
\ge& \frac{1}{4}\sigma^2n(n-1)+\calR^E-4r(1-r)\calR^\rho+\Phi^2-|\D\Phi|
\end{aligned}
\]
is non-negative (cf. Remark~\ref{R:Callias square int}).

Note that
\[
\|\calR^E-4r(1-r)\calR^\rho\|\le\frac{1}{2}n(n-1)\left(\|R^E\|+\|R^\rho\|\right).
\]
If one had $\|R^E\|+\|R^\rho\|<\frac{\sigma^2}{2}$, then one can always choose $\delta>0$ small enough such that $\Theta(r)$ is a uniformly positive operator for all $r\in[0,1]$, which makes $\calD_{\Phi,\id}(r)$ invertible, contradicting the fact that $\spf(\calD_{\Phi,\id},\rho\oplus\rho)\ne0$.

Therefore, it always holds that $\|R^E\|+\|R^\rho\|\ge\frac{\sigma^2}{2}$, and \eqref{E:hA-cw psc odd} follows by taking infimum and reciprocal. This completes the proof.
\end{proof}

\begin{remark}\label{R:K-cw psc odd-1}
Like \cite[Theorem~7]{BaerHanke24K-cw}, Theorem~\ref{T:K-cw psc odd} can be generalized to the version that $\sigma>0$ represents the lower bound of the self-adjoint curvature endomorphism in the Weitzenb\"ock formula of a general Dirac operator. In this case, $\omega$ will be chosen to be the differential form appearing in the cohomological formula of the corresponding spectral flow.
\end{remark}

\begin{remark}\label{R:K-cw psc odd-2}
In the case that $M$ has infinite \emph{uniform} K-cowaist, i.e., there exists a fixed compact subset $K\Subset M^\circ$ such that $\Kcw(M|K)$ is infinite, one can prove as before by choosing a suitable upper bound $\delta>0$ for the Callias potential $\Phi$ to show that $M$ cannot admit a complete metric of PSC and non-negative mean curvature along the boundary. A remaining question is whether this uniformity condition can be dropped.
\end{remark}

\subsection{Area enlargeable manifolds}\label{SS:area enlarg}

It is well known that an even-dimensional finitely area enlargeable manifold (cf. Definition~ \ref{IntroD:area enlarg}) has infinite K-cowaist. This is still true in odd dimensions.

\begin{proposition}\label{P:area enlarg K-cw odd}
An odd-dimensional finitely area enlargeable manifold in the sense of Definition~\ref{IntroD:area enlarg} has infinite K-cowaist, therefore has infinite $\omega$-cowaist for any $\omega$.
\end{proposition}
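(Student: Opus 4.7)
The plan is to mimic the classical proof in even dimensions, substituting a generator of $K^{-1}(\rmS^n)$ (i.e., a smooth map to a unitary group) for the usual generator of $K^0(\rmS^n)$ (a nontrivial bundle). The central observation is that pulling back an odd $K$-theory generator along an area-contracting map converts small pointwise area-contraction into small curvature $R^\rho$, exactly as in the even case one converts area-contraction into small $R^E$.

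First I would fix, once and for all, a smooth map $\rho_0:\rmS^n(1)\to U(N)$ (for $n=\dim M=2m-1$ and some sufficiently large $N$) representing a generator of $K^{-1}(\rmS^n)\cong\ZZ$, so that
\[
\kappa:=\int_{\rmS^n(1)}\ch_m(\rho_0)\neq 0.
\]
Such $\rho_0$ exists by Bott periodicity; for instance, one can take the clutching function of a generator of $K^0(\rmS^{n+1})$. Since $\rmS^n(1)$ is compact, the constant $C_0:=\|R^{\rho_0}\|$ defined via \eqref{E:curv rho} is finite.

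Next, for each $\varepsilon>0$, I would invoke the finitely area enlargeable hypothesis to obtain a finite covering $\pi:\widetilde M\to M$ and a smooth map $f:\widetilde M\to\rmS^n(1)$ of nonzero degree, locally constant at infinity and near $\partial\widetilde M$, which is $\varepsilon$-area-contracting. Set $\rho:=\rho_0\circ f\in C^\infty(\widetilde M,U(N))$ and let $E:=\widetilde M\times\CC^N$ carry its trivial flat connection. Then $(E;\rho)$ is a compatible pair on $\widetilde M$: $E$ is trivially flat by construction, and $\rho$ inherits local constancy at infinity and near the boundary from $f$. Admissibility comes from naturality of the odd Chern character: since $\ch_k(E)=0$ for $k\geq 1$, the only surviving summand in the integrand of \eqref{E:admi pair} is the one with $\gamma_1=\cdots=\gamma_{m-1}=0$ and $\gamma_m=m$, giving
\[
\int_{\widetilde M}\ch_0(E)^{m-1}\wedge\ch_m(\rho)=N^{m-1}\int_{\widetilde M}f^*\ch_m(\rho_0)=N^{m-1}\deg(f)\,\kappa\neq 0.
\]

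The quantitative heart of the argument is the curvature estimate. Since $\rho^{-1}\D\rho=f^*(\rho_0^{-1}\D\rho_0)$, pulling back commutes with the wedge square, giving $R^\rho=f^*R^{\rho_0}$ as a matrix-valued two-form. The $\varepsilon$-area-contracting property then yields $\|R^\rho\|\leq\varepsilon C_0$, while $\|R^E\|=0$. Hence $(E;\rho)$ witnesses $\Kcw(\widetilde M)\geq(\varepsilon C_0)^{-1}$, and by Proposition~\ref{P:K-cw covering} the same lower bound holds for $\Kcw(M)$. Letting $\varepsilon\to 0^+$ (with a possibly different finite cover for each $\varepsilon$, which is allowed since the proposition applies to each one individually) gives $\Kcw(M)=\infty$. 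The conclusion for $\omega$-cowaist is then automatic from Theorem~\ref{T:K-cw omega-cw odd}.

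The only step that is not pure naturality is producing $\rho_0$ with nonzero integrated top odd Chern character and bounded curvature; I expect this to be the main conceptual point, though it is completely standard. Once $\rho_0$ is fixed, the interplay between the odd Chern character, the pullback along $f$, and the finite-cover invariance of K-cowaist mechanically reduces the problem to the area-contraction estimate $R^\rho=f^*R^{\rho_0}$, which is formally identical to the even-dimensional mechanism.
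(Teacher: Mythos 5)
Your proposal is correct and follows essentially the same route as the paper: pull back a unitary-valued generator of odd K-theory on $\rmS^n(1)$ along the $\varepsilon$-area-contracting map, pair it with a trivial flat bundle to get an admissible pair whose odd Chern number is $\deg(f)$ times a nonzero constant, and transfer the resulting curvature lower bound through the finite covering via Proposition~\ref{P:K-cw covering}. The only difference is that the paper (Lemma~\ref{L:contracting map K-cw}, following \cite{LiSuWang24}) fixes an explicit $\bar{\rho}$ built from the half-spinor representation with $\|R^{\bar{\rho}}\|=\tfrac12$, yielding the sharp quantitative bound $\Kcw(M|K)\ge 2/\varepsilon$ that is reused in later estimates, whereas your abstract generator with unspecified constant $C_0$ suffices for the qualitative conclusion of infinite K-cowaist.
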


This proposition is a consequence of Proposition~\ref{P:K-cw covering}, Remark~\ref{R:K-cw subset} and the following lemma.

\begin{lemma}\label{L:contracting map K-cw}
Let $(M,g)$ be an oriented complete Riemannian $(2m-1)$-manifold possibly with compact boundary. If there exists a smooth $\varepsilon$-area-contracting map $f:M\to\rmS^{2m-1}(1)$ of non-zero degree which is locally constant at infinity and near the boundary, then
\[
\Kcw(M|K)\ge\frac{2}{\varepsilon},
\]
where $K=\supp(\D f)$.
\end{lemma}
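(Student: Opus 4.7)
The plan is to produce a single admissible pair $(E;\rho)$ with support $K$ whose total curvature $\|R^E\|+\|R^\rho\|$ is at most $\varepsilon/2$, by pulling back via $f$ a standard generator of $K^{-1}(\rmS^{2m-1}(1))$; taking reciprocals then gives the desired bound on $\Kcw(M|K)$.

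The starting point is a \emph{Bott generator} $u \in C^\infty(\rmS^{2m-1}(1), U(l))$, for instance the one built from Clifford multiplication, $u(x) = x_1 c_1 + \cdots + x_{2m} c_{2m}$, using complex matrices satisfying $c_i c_j + c_j c_i = -2\delta_{ij}$ (so $u(x)^2 = -I$ and $u$ is unitary on the unit sphere). Two properties must be checked: (i) $u$ represents a generator of $K^{-1}(\rmS^{2m-1}) \cong \ZZ$, so that $\int_{\rmS^{2m-1}(1)} \ch_m(u) \ne 0$; and (ii) $\|R^u\| \le 1/2$ on the unit sphere, which reduces to computing $(u^{-1}\D u)^2$ on tangent planes via the Clifford relations and applying the factor $1/4$ built into \eqref{E:curv rho}.

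Next I take $E$ to be the trivial Hermitian line bundle with the trivial connection over $M$, and set $\rho := f^*u$. Because $f$ is locally constant on $M\setminus K$, so is $\rho$; hence $(E;\rho)$ has support $K$. To verify admissibility, I choose $\gamma_1 = \cdots = \gamma_{m-1} = 0$ and $\gamma_m = m$ in \eqref{E:admi pair}, and use the naturality of the odd Chern character together with the fact that $f^*\ch_m(u)$ is supported in $K$:
\[
\int_M \ch_m(\rho) \;=\; \int_K f^*\ch_m(u) \;=\; \deg(f)\cdot \int_{\rmS^{2m-1}(1)} \ch_m(u) \;\ne\; 0,
\]
where the degree is well-defined since $f$ is locally constant at infinity, and non-vanishing follows from $\deg(f)\ne 0$ and (i).

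It remains to estimate curvatures. Since $R^E = 0$ for the trivial flat bundle, only $\|R^\rho\|$ matters. The identity $\rho^{-1}\D\rho = f^*(u^{-1}\D u)$ implies $(\rho^{-1}\D\rho)^2 = f^*((u^{-1}\D u)^2)$, so evaluating on any unit bivector $v\wedge w$ at $p\in M$ gives
\[
\bigl|R^\rho(v,w)\bigr| = \bigl|R^u(df_p v, df_p w)\bigr| \le \|R^u\|\cdot |df_p v \wedge df_p w| \le \varepsilon\,\|R^u\|.
\]
By (ii), $\|R^\rho\| \le \varepsilon/2$, so $\|R^E\|+\|R^\rho\| \le \varepsilon/2$ and Definition~\ref{D:K-cw subset} yields $\Kcw(M|K) \ge 2/\varepsilon$.

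The main obstacle is property (ii): establishing the sharp curvature bound $\|R^u\| \le 1/2$ for the Bott generator. This is the odd-dimensional analogue of the Bott-class curvature estimate used in the even-dimensional case of the same lemma, and reduces to an explicit computation on $\rmS^{2m-1}(1)$ using the Clifford relations.
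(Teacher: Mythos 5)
Your overall strategy --- pull back a unitary from the sphere, take $E$ trivially flat, get the curvature bound from $\varepsilon$-area-contraction and the non-vanishing from $\deg(f)\ne0$ --- is exactly the paper's, but your specific ``Bott generator'' fails property (i), and this is a genuine gap. With matrices satisfying $c_ic_j+c_jc_i=-2\delta_{ij}$, the map $u(x)=x_1c_1+\cdots+x_{2m}c_{2m}$ is full Clifford multiplication on a (sum of) irreducible module(s) of the complex Clifford algebra $\CC\mathrm{l}_{2m}$; it anticommutes with the chirality element, hence is of the form $\mathrm{diag}(v^*,v)$ composed with a constant unitary, so $[u]=[v]+[v^{-1}]=0$ in $K^{-1}(\rmS^{2m-1}(1))$. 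Concretely, $u^{-1}\D u(\epsilon_i)=-c(x)c(\epsilon_i)$, and $\tr\big((u^{-1}\D u)^{2m-1}\big)$ is pointwise a multiple of the trace of the volume element $c(e_1)\cdots c(e_{2m})$ on the \emph{full} spinor module, which vanishes; already for $m=1$ one gets $u=\left(\begin{smallmatrix}0&-e^{-i\theta}\\ e^{i\theta}&0\end{smallmatrix}\right)$ with $\tr(u^{-1}\D u)=0$. Hence $\int_{\rmS^{2m-1}(1)}\ch_m(u)=0$ and your pair $(E;\rho)$ is not admissible, so no lower bound on $\Kcw(M|K)$ follows. Ironically, the step you flagged as the main obstacle is fine for your $u$: since $x\perp\epsilon_i$, one computes $(u^{-1}\D u)^2(\epsilon_i,\epsilon_j)=2c(\epsilon_i)c(\epsilon_j)$, so $\|R^u\|=\tfrac12$; the failure is entirely in (i).

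The repair is exactly the paper's choice (following \cite{LiSuWang24}): work with the half-spinor space $\slaS_{2m,+}$ and let $\bar\rho(x)\in U(2^{m-1})$ be Clifford multiplication by $x$, viewed as a map $\slaS_{2m,+}\to\slaS_{2m,-}$, composed with a fixed unitary identification $\slaS_{2m,-}\cong\slaS_{2m,+}$ (equivalently, $\bar\rho(x)=x_{2m}\cdot\id+\sum_{i=1}^{2m-1}x_i\gamma_i$ for skew-adjoint matrices $\gamma_i$ of $\CC\mathrm{l}_{2m-1}$). This satisfies the same identity $(\bar\rho^{-1}\D\bar\rho)^2(\epsilon_i,\epsilon_j)=2\,\rmc(\epsilon_i^*)\rmc(\epsilon_j^*)$, hence $\|R^{\bar\rho}\|=\tfrac12$, but the relevant trace is now taken on $\slaS_{2m,+}$ alone, where the volume element acts as $+1$, giving $\int_{\rmS^{2m-1}(1)}\ch(\bar\rho)=1$. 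With $\rho=f^*\bar\rho$ and $E$ trivially flat, the rest of your argument (support in $K=\supp(\D f)$, the degree computation, $\|R^E\|+\|R^\rho\|\le\varepsilon/2$, and Definition~\ref{D:K-cw subset}) goes through verbatim and then coincides with the paper's proof.
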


\begin{proof}
Following \cite{LiSuWang24}, we consider the trivial vector bundle $E_0:=\rmS^{2m-1}(1)\times\slaS_{2m,+}$ over $\rmS^{2m-1}(1)$, where $\slaS_{2m,+}$ denotes the space of half-spinors on $\RR^{2m}$. There is a smooth function $\bar{\rho}:\rmS^{2m-1}(1)\to\End(\slaS_{2m,+})\cong U(2^{m-1})$ with the property that
\[
\int_{\rmS^{2m-1}(1)}\ch(\bar{\rho})=1\quad\mbox{and}\quad\|R^{\bar{\rho}}\|=\frac{1}{2}.\footnote{To be precise, this choice of $\bar{\rho}$ implies that
\[
(\bar{\rho}^{-1}(\D\bar{\rho}))^2(\epsilon_i,\epsilon_j)=2\bar{\rmc}(\epsilon_i^*)\bar{\rmc}(\epsilon_j^*),
\]
where $\bar{\rmc}(\cdot)$ is the Clifford multiplication on $\RR^{2m}$, and $\{\epsilon_i\}_{i=1}^{2m-1}$ is a local orthonormal frame of $T\rmS^{2m-1}(1)$.}
\]
Let $\rho:M\to U(2^{m-1})$ be the pullback $f^*(\bar{\rho})$. Then $\rho$ is locally constant at infinity and near the boundary, and
\[
\int_M\ch(\rho)=\deg(f)\int_{\rmS^{2m-1}(1)}\ch(\bar{\rho})\ne0.
\]
In this case, the condition \eqref{E:admi pair} is vacuous for $E$. Hence for any compatible bundle $E\to M$ with support $K$, $(E;\rho)$ is an admissible pair with support $K$. We can choose $E$ to be  a trivially flat bundle. Then
\[
\|R^E\|+\|R^\rho\|=\|f^*(R^{\bar{\rho}})\|\le\frac{\varepsilon}{2}.
\]
The lemma then follows from Definition~\ref{D:K-cw subset}.
\end{proof}

\begin{remark}\label{R:contracting map K-cw}
In fact, from \cite{LiSuWang24}, $\int_M\hat{A}(M)\ch(\rho)=\int_M\ch(\rho)\ne0$. So $(E;\rho)$ is also an $\hat{A}$-admissible pair. This indicates the same estimate for the $\hat{A}$-cowaist
\[
\hAcw(M|K)\ge\frac{2}{\varepsilon}.
\]
The same conclusion holds for even-dimensional manifolds (by choosing $E$ to be the pullback of the half spinor bundle over the sphere).
\end{remark}

\begin{remark}\label{R:area enlarg psc odd}
By Theorem~\ref{T:K-cw psc odd} and Proposition~\ref{P:area enlarg K-cw odd}, an odd-dimensional finitely area enlargeable spin manifold cannot admit a complete uniformly PSC metric. This fact may not be got conveniently by taking product with $\rmS^1$ and converting to even dimensions, due to the \emph{local constancy} assumption.
\end{remark}

\subsection{Codimension zero quantitative scalar curvature estimates}\label{SS:quant est codim 0}

By Theorems~\ref{IntroT:K-cw psc even} and \ref{T:K-cw psc odd}, on a complete Riemannian spin $n$-manifold $(M,g)$ without boundary, one has the estimate 
\begin{equation}\label{E:inf scal < 1/hatA-cw}
\inf(\scal_g)\le\frac{2n(n-1)}{\hAcw(M)}.
\end{equation}
We now present a quantitative version of this estimate.

\begin{theorem}\label{T:quant est codim 0}
Let $(M,g)$ be an $n$-dimensional complete Riemannian spin manifold without boundary. Let $K\Subset M$ be a compact subset of $M$. Suppose that
\begin{enumerate}
\item $\scal_g>\frac{2n(n-1)}{\hAcw(M|K)}$ on $K$; \label{IT:quant est codim 0-1}
\item $\scal_g\ge\sigma^2n(n-1)$ for some $\sigma>0$ on $K_\delta$, where $K_\delta$ is a $\delta$-neighborhood of $K$ with $\delta<\frac{\pi}{\sigma n}$. \label{IT:quant est codim 0-2}
\end{enumerate}
Then
\[
\inf_{p\in M}(\scal_g(p))<-\sigma^2n(n-1)\tan^2\Big(\frac{1}{2}\sigma n\delta\Big).
\]
\end{theorem}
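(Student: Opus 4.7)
The plan is to argue by contradiction: suppose $\inf_{p\in M}\scal_g(p)\ge C$ with $C:=-\sigma^2 n(n-1)\tan^2\bigl(\tfrac{1}{2}\sigma n\delta\bigr)$, and construct a Callias operator on $M$ whose spectral flow must simultaneously be analytically zero and topologically nonzero. The argument is a quantitative refinement of Theorem~\ref{T:K-cw psc odd} in which the potential is calibrated so as to realize a Riccati-type identity across the collar $K_\delta\setminus K$. First I would exploit hypothesis~\ref{IT:quant est codim 0-1} and the compactness of $K$ to find $\varepsilon>0$ with $\scal_g\ge 2n(n-1)\hAcw(M|K)^{-1}+\varepsilon$ on $K$, and then pick an $\hat{A}$-admissible pair $(E;\rho)$ with support $K$ whose $\|R^E\|+\|R^\rho\|$ is small enough that $\tfrac{1}{4}\scal_g-\tfrac{n(n-1)}{2}(\|R^E\|+\|R^\rho\|)$ is uniformly strictly positive on $K$.

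Next I would construct the Callias potential $\Phi=f\circ d(\cdot,K)$, where $f$ solves the Riccati equation $f'=f^2+\sigma^2n^2/4$ on $[0,\delta]$ with $f(0)=0$, namely $f(s)=\tfrac{\sigma n}{2}\tan(\tfrac{\sigma n}{2}s)$ (well-defined since $\delta<\pi/(\sigma n)$), extended as the constant $f(\delta)=\tfrac{\sigma n}{2}\tan(\tfrac{1}{2}\sigma n\delta)$ for $s>\delta$. This Lipschitz function (permitted by Remark~\ref{R:Lip potential}) vanishes on $K$, and because $\rho$ is locally constant outside $K$ one has $\Phi^2-|[D(r),\Phi]|=f(\delta)^2>0$ uniformly outside $K_\delta$, so $\Phi$ is a Callias potential with Callias support $\overline{K_\delta}$ on the relative Dirac bundle $S=(\slaS\otimes E)\oplus(\slaS\otimes E_0)$ built from the Gromov--Lawson pair $(E,E_0)$, with $E_0$ the trivial flat bundle isomorphic to $E$ outside $K$.

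For the topological side I would fix a compact $M'\Subset M$ with smooth boundary containing $\overline{K_\delta}$ and apply Theorem~\ref{T:codim 0 spf} to identify $\spf(\calD_\Phi,\rho\oplus\rho)$ with $\spf(D_{V(E,E_0)},\tilde\rho)$ on $\double M'$; Getzler's cohomological formula \cite[Theorem~2.8]{Getzler93} then evaluates this exactly as in the proof of Theorem~\ref{T:K-cw psc odd}, yielding
\[
\int_M\hat{A}(M)(\ch(E)-\rk(E))\,\ch(\rho)\ne 0
\]
by the $\hat{A}$-admissibility \eqref{E:omega admi pair} of $(E;\rho)$. For the analytical side I would apply the Weitzenb\"ock--Penrose identity \eqref{E:Callias square int-2} to each $\calD_\Phi(r)$: the scalar integrand $\tfrac{n}{n-1}\calR(r)+\Phi^2+[D(r),\Phi]\theta$ is uniformly strictly positive on $K$ by step~1, equals $\tfrac{n\scal_g}{4(n-1)}+f^2-f'\ge\tfrac{\sigma^2 n^2}{4}+f^2-f'=0$ on $K_\delta\setminus K$ by the Riccati equation together with hypothesis~\ref{IT:quant est codim 0-2}, and satisfies $\tfrac{n\scal_g}{4(n-1)}+f(\delta)^2\ge 0$ outside $K_\delta$ by the contradiction hypothesis. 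If $\calD_\Phi(r)u=0$ for some $u\in L^2$, then both the twistor term $|\calP(r)u|^2$ and the pointwise curvature term vanish identically, so $u$ is smooth and vanishes on the interior of $K$; Aronszajn-type unique continuation for the Dirac-type operator $\calD_\Phi(r)$ then gives $u\equiv 0$, so every $\calD_\Phi(r)$ is invertible and $\spf(\calD_\Phi,\rho\oplus\rho)=0$, the desired contradiction.

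The main obstacle is that the Riccati construction forces the pointwise positivity to be only non-strict across $K_\delta\setminus K$ and (possibly) outside $K_\delta$, so invertibility cannot be read off from a uniform lower bound and must instead be extracted by pairing the strict positivity on $K$ with the auxiliary twistor operator $\calP(r)$ and unique continuation. A minor technicality is that if $K$ has empty interior one should first replace it by a slightly larger smooth compact set inside the open region $\{\scal_g>2n(n-1)\hAcw(M|K)^{-1}\}$, which contains $K$ by hypothesis~\ref{IT:quant est codim 0-1}, so that the strict-positivity region for the analytical step has non-empty interior.
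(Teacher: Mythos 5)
Your proposal reproduces the paper's proof of this theorem almost step for step in the odd-dimensional case: the same contradiction hypothesis, the same choice of an $\hat{A}$-admissible pair $(E;\rho)$ supported in $K$ with $\|R^E\|+\|R^\rho\|$ strictly below $\tfrac{1}{2n(n-1)}\inf_K\scal_g$, the same Lipschitz potential $h(x)=\tfrac{1}{2}\sigma n\tan(\tfrac{1}{2}\sigma n x)$ with $x=\min\{\dist_g(\cdot,K),\delta\}$, the same region-by-region positivity check via \eqref{E:Callias square int-2}, and the same evaluation of the spectral flow through the codimension zero formula and Getzler's cohomological formula. The only real divergence is the endgame: where the paper concludes vanishing of kernel elements from the equality conditions via the discussion in \cite[Remark~4.5]{CeccZeid24GT}, you use vanishing of $u$ on the interior of $K$ plus weak unique continuation for Dirac-type operators; that is a legitimate alternative route to the same conclusion.

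Two points need attention. First, the theorem is stated for all $n$, but your argument only covers $n$ odd: for $n$ even the $\hat{A}$-cowaist subject to $K$ is defined through compatible bundles alone (no map $\rho$), and the pairing $\int_M\hat{A}(M)(\ch(E)-\rk(E))\ch(\rho)$ you rely on vanishes identically in even dimensions, so the spectral-flow mechanism gives nothing; the paper handles this case by replacing the spectral flow with the Fredholm index of a single Callias operator built from the Gromov--Lawson pair, and your write-up should do the same (the analytic estimates are unchanged). Second, your ``minor technicality'' about $K$ having empty interior is a non-issue, and the proposed fix is actually harmful: since $\ch(E)-\rk(E)$ and $\ch(\rho)$ are smooth forms vanishing identically outside $K$, a nonzero integral in \eqref{E:omega admi pair} forces them to be nonzero on an open subset of $K$, so $K$ automatically has nonempty interior whenever $\hAcw(M|K)>0$ (and if $\hAcw(M|K)=0$ hypothesis \ref{IT:quant est codim 0-1} is vacuously violated, so there is nothing to prove). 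Enlarging $K$ to a set $K'$ as you suggest would force you either to shrink $\delta$ (weakening the outer bound to $\tan^2(\tfrac{1}{2}\sigma n\delta')$ with $\delta'<\delta$, which the non-strict contradiction hypothesis does not survive) or to let the $\delta$-collar of $K'$ leak outside $K_\delta$, where hypothesis \ref{IT:quant est codim 0-2} no longer controls $\scal_g$; so drop that modification and simply use that $K$ has interior.
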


\begin{proof}
First consider $n$ to be odd. Suppose, by contradiction, that
\begin{equation}\label{E:scal ge tan^2}
\scal_g\ge-\sigma^2n(n-1)\tan^2\Big(\frac{1}{2}\sigma n\delta\Big)\quad\mbox{on }M.
\end{equation}

We can assume $\hAcw(M|K)>0$, otherwise there is nothing to show. Denote
\[
\alpha:=\frac{1}{2n(n-1)}\inf_{p\in K}(\scal_g(p)),\qquad
\beta:=(\hAcw(M|K))^{-1}.
\]
So $\alpha>\beta$. Then there exists an $\hat{A}$-admissible pair $(E;\rho)$ with support $K$ such that
\begin{equation}\label{E:curv E+rho bound}
\alpha>\|R^E\|+\|R^\rho\|\ge\beta.
\end{equation}
Let $E_0$ be the trivial flat bundle which is isomorphic to $E$ outside $K$. So $(E,E_0)$ is a Gromov--Lawson pair with support $K$.

Construct a Lipschitz continuous function-valued Callias potential $\Phi:=h(x)$, where $h(t)=\frac{1}{2}\sigma n\tan(\frac{1}{2}\sigma nt)$ and
\[
x:M\to[0,\delta],\qquad x(p):=\min\{\dist_g(K,p),\delta\}.
\]
Then
\begin{equation}\label{E:Phi^2-dPhi bound}
\Phi^2-|\D\Phi|\ge-\frac{1}{4}\sigma^2n^2
\end{equation}
almost everywhere on $M$ and
\begin{equation}\label{E:Phi on M-Kdelta}
(\Phi^2-|\D\Phi|)_{|M\setminus K_\delta}=\Phi^2_{|M\setminus K_\delta}\equiv\frac{1}{4}\sigma^2n^2\tan^2\Big(\frac{1}{2}\sigma n\delta\Big).
\end{equation}
Consider the Callias operator $\calD_\Phi=D+\Phi\theta$ on $M$ as in Subsection~\ref{SS:codim 0} where $F=E_0$. We hope to compute the spectral flow $\spf(\calD_\Phi,\rho\oplus\rho)$. All the tools in Section~ \ref{S:spf Callias} can be used as pointed out in Remark~\ref{R:Lip potential}.

Choose a compact subset $M'\Subset M$ with smooth boundary such that $K_\delta$ is contained in the interior of $M'$. As before, cutting $M$ along $\pM'$, the spectral flow $\spf(\calD_\Phi,\rho\oplus\rho)$ splits into two parts. If we put the boundary condition given by $\bfs=\id$, then the part on $\overline{M\setminus M'}$ vanishes. For the remaining part $\spf(\calD^{M'}_{\Phi,\id},\rho_{|M'}\oplus\rho_{|M'})$, as in the proof of Theorem~\ref{T:K-cw psc odd}, by Corollary~\ref{IntroC:codim 0 spf} and the triviality of $\rho$ outside $M'$,
\[
\spf(\calD^{M'}_{\Phi,\id},\rho_{|M'}\oplus\rho_{|M'})=\spf(D_{V(E,E_0)},\tilde{\rho})=\int_M\hat{A}(M)(\ch(E)-\rk(E))\ch(\rho)\ne0,
\]
where $D_{V(E,E_0)}$ is the twisted Dirac operator on $\double M'$ and $\tilde{\rho}$ is the gluing of $\rho_{|M'}$ and $\rho_{|M'}$.
In summary, we get $\spf(\calD_\Phi,\rho\oplus\rho)\ne0$.

On the other hand, for $r\in[0,1]$, let
\[
\calD_\Phi(r)=(1-r)\calD_\Phi+r(\rho\oplus\rho)^{-1}\calD_\Phi(\rho\oplus\rho)=D(r)+\Phi\theta.
\]
For $u\in\dom(\calD_\Phi(r))$, by \eqref{E:Callias square int-2},
\[
\int_M|\calD_\Phi(r)u|^2\D V_M\ge\int_M\langle u,\Theta(r)u\rangle \D V_M,
\]
provided that the self-adjoint bundle endomorphism 
\[
\Theta(r):=\frac{n}{n-1}\calR(r)+\Phi^2-|\D\Phi|
\]
is non-negative,
where
\begin{equation}\label{E:calR bound}
\begin{aligned}
\calR(r)& =\frac{1}{4}\scal_g+\calR^E-4r(1-r)\calR^\rho \\
& \ge\frac{1}{4}\scal_g-\frac{n(n-1)}{2}(\|R^E\|+\|R^\rho\|).
\end{aligned}
\end{equation}
We examine $\Theta(r)$ that for any $r\in[0,1]$, \ref{IT:quant est codim 0-1}, \eqref{E:curv E+rho bound}, \eqref{E:calR bound} $\Longrightarrow$
\[
\Theta(r)=\frac{n}{n-1}\calR(r)>0\quad\mbox{on }K;
\]
\ref{IT:quant est codim 0-2}, \eqref{E:Phi^2-dPhi bound} $\Longrightarrow$
\[
\Theta(r)=\frac{n}{n-1}\cdot\frac{1}{4}\scal_g+\Phi^2-|\D\Phi|\ge0\quad\mbox{on }K_\delta\setminus K;
\]
\eqref{E:scal ge tan^2}, \eqref{E:Phi on M-Kdelta} $\Longrightarrow$
\[
\Theta(r)=\frac{n}{n-1}\cdot\frac{1}{4}\scal_g+\Phi^2\ge0\quad\mbox{on }M\setminus K_\delta.
\]

From above, if $u\in\ker(\calD_\Phi(r))$ for any $r\in[0,1]$, then $u=0$ on $K$ and $u$ satisfies the conditions of equality in the spectral estimates. From the discussion in \cite[Remark~4.5]{CeccZeid24GT}, one can deduce that $u$ vanishes almost everywhere on $M$, which means that $\spf(\calD_\Phi,\rho\oplus\rho)=0$, a contradiction! This proves the theorem for $n$ odd.

When $n$ is even, one only has a compatible bundle in the definition of $\hat{A}$-cowaist. In this case, one considers instead the index of a single Callias operator from the relative Dirac bundle of the form \cite[Example~2.5]{CeccZeid24GT} associated to a Gromov--Lawson pair. The computation and argument are essentially the same as above (and indeed a little simpler). To sum up, the theorem is proved.
\end{proof}

\begin{remark}\label{R:quan est codim 0}
We briefly sketch how \eqref{E:inf scal < 1/hatA-cw} can be concluded by Theorem~\ref{T:quant est codim 0}. Suppose \eqref{E:inf scal < 1/hatA-cw} does not hold, that is, $\inf(\scal_g)>\frac{2n(n-1)}{\hAcw(M)}$. By Remark~\ref{R:K-cw subset}, we can find a compact $K\Subset M$ such that
\[
\frac{2n(n-1)}{\inf(\scal_g)}<\hAcw(M|K)\le\hAcw(M).
\]
So Theorem~\ref{T:quant est codim 0}.\ref{IT:quant est codim 0-1} is satisfied. Theorem~\ref{T:quant est codim 0}.\ref{IT:quant est codim 0-2} can also be satisfied by choosing $\sigma$ and $\delta$ small enough. Then one concludes that $\inf(\scal_g)<0$, a contradiction.
\end{remark}

In the case involving area-contracting maps, we have the following  quantitative Llarull's theorem on non-compact manifolds as in \cite[Theorem~1.4]{Shi24LN} (where $\varepsilon=1$).

\begin{corollary}\label{C:quan est codim 0-contr}
Let $(M,g)$ be an $n$-dimensional complete Riemannian spin manifold without boundary. Let $f:M\to\rmS^n(1)$ be a smooth $\varepsilon$-area-contracting map of non-zero degree which is locally constant at infinity. Suppose that
\begin{enumerate}
\item $\scal_g\ge\varepsilon n(n-1)$ on $K:=\supp(\D f)$; \label{IC:quant est codim 0-1}
\item $\scal_g\ge\sigma^2n(n-1)$ for some $\sigma>0$ on $K_\delta$, where $K_\delta$ is a $\delta$-neighborhood of $K$ with $\delta<\frac{\pi}{\sigma n}$. \label{IC:quant est codim 0-2}
\end{enumerate}
Then
\[
\inf_{p\in M}(\scal_g(p))<-\sigma^2n(n-1)\tan^2\Big(\frac{1}{2}\sigma n\delta\Big).
\]
\end{corollary}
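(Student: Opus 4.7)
The plan is to view the corollary as a direct application of Theorem~\ref{T:quant est codim 0}: I will translate the data of the area-contracting map $f$ into an $\hat{A}$-admissible pair over $M$ supported in $K=\supp(\D f)$, and then feed the resulting lower bound for $\hAcw(M|K)$ into the theorem.

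First I would construct the pair. Using the special map $\bar{\rho}\in C^\infty(\rmS^n(1),U(\cdot))$ from the proof of Lemma~\ref{L:contracting map K-cw} (in the odd-dimensional case), respectively the half-spinor bundle construction indicated in Remark~\ref{R:contracting map K-cw} (in the even case), set $\rho:=f^*(\bar{\rho})$ and let $E$ be a trivially flat bundle (resp.\ the pullback of the half-spinor bundle). Because $f$ is locally constant at infinity, both $E$ and $\rho$ are compatible with support in $K$. The non-zero-degree hypothesis and the computations in Lemma~\ref{L:contracting map K-cw} and Remark~\ref{R:contracting map K-cw} give
\[
\int_M \hat{A}(M)\wedge(\ch(E)-\rk(E))\wedge\ch(\rho)\neq 0,
\]
so $(E;\rho)$ is an $\hat{A}$-admissible pair with support $K$.

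Next I would estimate curvatures. The $\varepsilon$-area-contracting property of $f$ yields $\|R^\rho\|=\|f^*(R^{\bar{\rho}})\|\le \varepsilon\|R^{\bar{\rho}}\|=\varepsilon/2$, while $\|R^E\|=0$ (trivially flat bundle); in the even case one checks that $\|R^E\|+\|R^\rho\|\le\varepsilon/2$ as in Remark~\ref{R:contracting map K-cw}. By Definition~\ref{D:K-cw subset} this gives $\hAcw(M|K)\ge 2/\varepsilon$, hence
\[
\frac{2n(n-1)}{\hAcw(M|K)}\le \varepsilon n(n-1)\le \scal_g\quad\text{on }K,
\]
which verifies Theorem~\ref{T:quant est codim 0}.\ref{IT:quant est codim 0-1}; condition \ref{IT:quant est codim 0-2} is simply inherited from the corollary's hypothesis. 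Invoking Theorem~\ref{T:quant est codim 0} then produces exactly the claimed estimate $\inf_{p\in M}\scal_g(p)<-\sigma^2 n(n-1)\tan^2(\tfrac{1}{2}\sigma n\delta)$.

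The one subtlety I expect to be the main obstacle is that hypothesis \ref{IT:quant est codim 0-1} of Theorem~\ref{T:quant est codim 0} is \emph{strict}, whereas the corollary only assumes $\scal_g\ge\varepsilon n(n-1)$. I would address this by not using the theorem as a black box, but tracing its proof with the explicit pair $(E;\rho)$ above. In the proof, strictness is invoked only to obtain $\Theta(r)>0$ on $K$ and thereby force $u\equiv 0$ there; with our pair we still have $\Theta(r)\ge 0$ on $K$, and the Weitzenb\"ock/Penrose equality-case rigidity argument of \cite[Remark~4.5]{CeccZeid24GT} (which is what is used in the proof of Theorem~\ref{T:quant est codim 0} anyway) still forces $u$ to vanish on $K$ and then throughout $M$. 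This gives the contradiction with $\spf(\calD_\Phi,\rho\oplus\rho)\neq 0$ established via Corollary~\ref{IntroC:codim 0 spf} and the cohomological formula, completing the argument.
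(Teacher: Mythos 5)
Your construction of the $\hat{A}$-admissible pair $(E;\rho)=(\text{trivial bundle};f^*\bar{\rho})$ (resp.\ the pullback of the half-spinor bundle when $n$ is even), the estimate $\|R^E\|+\|R^\rho\|\le\varepsilon/2$, and the resulting bound $\hAcw(M|K)\ge 2/\varepsilon$ all match the paper's argument, and you correctly identified the one real issue: hypothesis \ref{IT:quant est codim 0-1} of Theorem~\ref{T:quant est codim 0} is strict, while the corollary only assumes $\scal_g\ge\varepsilon n(n-1)$ on $K$. However, your resolution of this issue has a genuine gap. You claim that $\Theta(r)\ge 0$ on $K$ together with the equality-case rigidity of \cite[Remark~4.5]{CeccZeid24GT} ``still forces $u$ to vanish on $K$ and then throughout $M$.'' That is not justified: in the proof of Theorem~\ref{T:quant est codim 0}, the rigidity argument takes as \emph{input} the vanishing of $u$ on $K$ (obtained there precisely from the strict positivity $\Theta(r)>0$ on $K$) plus the equality conditions, and then propagates the vanishing. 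With mere non-negativity of $\Theta(r)$ everywhere, the spectral estimate only yields $\calP(r)u=0$ and $\langle u,\Theta(r)u\rangle=0$; there is no seed for the propagation, and one cannot rule out a nonzero kernel element (indeed, since $\spf(\calD_\Phi,\rho\oplus\rho)\ne0$, kernels do occur for some $r$ -- the contradiction must come from strict positivity somewhere, which is exactly what your argument fails to produce).

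The paper closes this gap with an extra geometric observation specific to this setting, which your proposal does not use: since $K=\supp(\D f)$ and $f$ is smooth, the area-contracting constant of $f$ tends to zero near $\p K$, so at interior points of $K$ close to $\p K$ the pointwise norm of $R^\rho$ is strictly less than $\varepsilon/2$, while $\scal_g\ge\varepsilon n(n-1)$ there. Hence $\Theta(r)=\frac{n}{n-1}\calR(r)+\Phi^2-|\D\Phi|$ is non-negative everywhere and \emph{strictly positive on some open subset of} $K$. This forces any kernel element to vanish on that open set, and only then does the equality-case argument of \cite[Remark~4.5]{CeccZeid24GT} (essentially a Gronwall/unique-continuation step coming from $\calP(r)u=0$ and $D(r)u=-\Phi\theta u$) give $u\equiv0$ on $M$, contradicting the nonvanishing spectral flow. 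To repair your proof, replace the sentence asserting that non-negativity on $K$ suffices with this localization of strict positivity near $\p K$.
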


For the proof, note that $\hAcw(M|K)\ge\frac{2}{\varepsilon}$ by Lemma~\ref{L:contracting map K-cw} and Remark~\ref{R:contracting map K-cw}. So $\scal_g\ge\frac{2n(n-1)}{\hAcw(M|K)}$ on $K$. The $\hat{A}$-admissible pair $(E;\rho)$ (or the compatible bundle $E$ when $n$ is even) is chosen as in Remark~ \ref{R:contracting map K-cw}. A difference is that hypothesis \ref{IC:quant est codim 0-1} of Corollary~ \ref{C:quan est codim 0-contr} is not strict. But this does not matter since it is enough to show that the bundle endomorphism $\frac{n}{n-1}\calR(r)+\Phi^2-|\D\Phi|$ is non-negative everywhere and positive on some open subset of $K$. This is of course satisfied by the fact that the area-contracting constant of $f$ approaches zero near $\p K$.

The next is a result under the circumstance of infinite K-cowaist.

\begin{corollary}\label{C:quan est codim 0-inf}
Let $(M,g)$ be an $n$-dimensional complete Riemannian spin manifold without boundary. Let $K\Subset M$ be a compact subset of $M$ with smooth boundary. Suppose that
\begin{enumerate}
\item $\Kcw(M|K)=\infty$;
\item $\scal_g>0$ on $K$;
\item $\scal_g\ge\sigma^2n(n-1)$ for some $\sigma>0$ on an inward open geodesic collar neighborhood $U\subset K$ of $\p K$ of width $\delta\in(0,\frac{\pi}{\sigma n})$.
\end{enumerate}
Then
\[
\inf_{p\in M}(\scal_g(p))<-\sigma^2n(n-1)\tan^2\Big(\frac{1}{2}\sigma n\delta\Big).
\]
\end{corollary}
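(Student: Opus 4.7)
My plan is to reduce to Theorem~\ref{T:quant est codim 0} by replacing the reference compact set $K$ with its inward shrinkage
\[
K_0:=\{p\in K\ :\ \dist_g(p,\p K)\ge\delta\}.
\]
Because $U$ is a geodesic collar, one checks that $K_{0,\delta}\setminus K_0\subset U$ and $K_{0,\delta}\subset K$. Suppose, for contradiction, that $\scal_g\ge-\sigma^2n(n-1)\tan^2(\tfrac{1}{2}\sigma n\delta)$ holds on all of $M$. I take the Callias potential $\Phi=h\bigl(\min(\dist_g(K_0,\cdot),\delta)\bigr)$ with $h(t)=\tfrac{1}{2}\sigma n\tan(\tfrac{1}{2}\sigma nt)$, so that $\Phi$ vanishes on $K_0$, transitions across $U$, and equals $h(\delta)$ on $M\setminus K_{0,\delta}\supset M\setminus K$. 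The three pointwise bounds from the proof of Theorem~\ref{T:quant est codim 0} then line up: on $K_0$, hypothesis (ii) gives $\scal_g>0$ (to be paired with sufficiently small bundle curvatures); on $U$, hypothesis (iii) combines with $\Phi^2-|\D\Phi|\ge-\tfrac{1}{4}\sigma^2n^2$; on $M\setminus K_{0,\delta}$, the contradiction hypothesis combines with $\Phi^2=\tfrac{1}{4}\sigma^2n^2\tan^2(\tfrac{1}{2}\sigma n\delta)$. Hence the proof of Theorem~\ref{T:quant est codim 0} runs verbatim with $(K_0,\delta)$ in place of $(K,\delta)$, provided one can produce an $\hat{A}$-admissible pair $(E;\rho)$ with support $K_0$ and $\|R^E\|+\|R^\rho\|<\tfrac{1}{2n(n-1)}\inf_{K_0}\scal_g$.

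The only ingredient that is not immediate from the hypotheses is therefore the support-shrinking claim $\hAcw(M|K_0)=\infty$. I would prove this by a collar-straightening argument. Fix $\epsilon>0$ and take, via hypothesis (i) together with Theorem~\ref{T:K-cw omega-cw odd}, an admissible pair $(E;\rho)$ with support $K$ and $\|R^E\|+\|R^\rho\|<\epsilon$. Since $E$ is trivially flat on $M\setminus K$, its restriction $E|_{\p K}$ is topologically trivial; parallel-transport a trivialization inward along the geodesic normal flow identifying $U\cong\p K\times[0,\delta)$ to present $\nabla^E|_U=\D+\omega$ with $\omega|_{t=0}=0$ and $\|\omega(\cdot,t)\|\le\epsilon\,t$. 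Replace $\nabla^E$ by $\D$ on $U$, using a cutoff supported in a thin shell of width comparable to $\delta$ just inside $\p K_0$ to smoothly interpolate back to $\nabla^E$ on $K_0$; perform the analogous straightening for $\rho$. The modified pair has support $K_0$, curvature bounded by a $\delta$-dependent multiple of $\epsilon$, and its odd Chern integrand differs from the original pointwise by $O(\epsilon)$ on $U$ alone, so the integral is perturbed by only $O(\epsilon)$ and the non-zero value persists once $\epsilon$ is small enough.

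The main obstacle is this shrinking step: the odd Chern number is a discrete topological invariant that must survive a gauge-theoretic deformation across $U$, and one exploits that its variation is controlled polynomially in the curvatures, so the smallness afforded by $\Kcw(M|K)=\infty$ defeats the perturbation. Granted the claim, the cohomological formula for the spectral flow forces $\spf(\calD_\Phi,\rho\oplus\rho)\ne0$, while the pointwise non-negativity of $\Theta(r)$ combined with the rigidity-at-equality argument in the proof of Theorem~\ref{T:quant est codim 0} forces every $\calD_\Phi(r)$ to have trivial kernel, whence the spectral flow must vanish---the desired contradiction.
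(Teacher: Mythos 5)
Your overall reduction is the same as the paper's: shrink the support of the admissible pair from $K$ to $K_0=K\setminus U$ and then rerun the proof of Theorem~\ref{T:quant est codim 0} with the potential built from $\dist_g(K_0,\cdot)$. The gap is in the support-shrinking step, which you carry out by a gauge-theoretic cutoff plus a ``pointwise $O(\epsilon)$'' perturbation estimate of the admissibility integral, and this mechanism does not work. The decisive problem is the $\rho$-factor: the odd Chern character forms $\ch_j(\rho,\D)$ are polynomials in $\rho^{-1}(\D\rho)$, \emph{not} in the antisymmetrized square $R^\rho=\frac14(\rho^{-1}\D\rho)^2$ of \eqref{E:curv rho}. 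Smallness of $\|R^\rho\|$ gives no control whatsoever on $\rho^{-1}\D\rho$ (for instance $\rho$ valued in a maximal torus has $R^\rho\equiv0$ with $\rho^{-1}\D\rho$ arbitrary), so there is no analogue of your temporal-gauge bound $\|\omega(\cdot,t)\|\le\epsilon t$ for $\rho$, the ``analogous straightening for $\rho$'' is undefined, and the claim that the odd Chern integrand changes by $O(\epsilon)$ on $U$ is false in general: essentially all of the nonzero odd Chern number may sit on the collar even when $\|R^E\|+\|R^\rho\|$ is tiny. (Even for the $E$-part alone, an $O(\epsilon)$ bound on the change of the \emph{integral} would require a uniform bound on the complementary factor $\ch(\rho)$ in the integrand, which is not available since the pair varies with $\epsilon$.) A secondary but also quantitative problem is the location of your interpolation shell: the small-connection-form gauge exists only on the collar (which has width exactly $\delta$; nothing lets you extend it with small $\omega$ past $\p K_0$), so the cutoff must live in $U$, the modified pair then has support $K_0\cup\{\delta-\eta\le t<\delta\}$ rather than $K_0$, and the contradiction hypothesis $\scal_g\ge-\sigma^2n(n-1)\tan^2(\frac12\sigma n\delta)$ no longer makes $\Theta(r)\ge0$ outside the $(\delta-\eta)$-neighborhood; letting $\eta\to0$ only recovers the non-strict inequality, losing the stated constant.

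The paper avoids all of this by shrinking the support topologically rather than perturbatively: choose a smooth map $\psi:M\to M$, smoothly homotopic to the identity and of bounded Lipschitz constant, with $\psi_{|U}$ equal to the projection onto $\p K$. Then $(\psi^*E;\psi^*\rho)$ is trivial on all of $U$ and outside $K$, hence has support exactly $K'=K\setminus U$ (so the full width $\delta$ is retained); the admissibility integral is preserved \emph{exactly} by homotopy invariance of the Chern and odd Chern characters, with no smallness argument needed; and the curvatures are only rescaled by $\Lip(\psi)^2$, so $\hAcw(M|K')=\infty$. If you want to salvage your approach, replace the linear interpolation of $\rho$ and of the connection by precomposition with such a collar-collapsing map (equivalently, reparametrization in the collar variable), and invoke homotopy invariance instead of a pointwise perturbation bound --- at which point your argument becomes the paper's.
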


\begin{proof}
It is clear that Theorem~\ref{T:K-cw omega-cw odd} still holds for K-cowaist subject to a subset. So we have $\hAcw(M|K)=\infty$. By Theorem~\ref{T:quant est codim 0}, it suffices to construct a smaller compact subset $K'\Subset K^\circ$ such that $\hAcw(M|K')=\infty$. We proceed as in \cite[Proposition~6.4]{CeccZeid24GT}. Choose a smooth map $\psi:M\to M$ which is smoothly homotopic to the identity map such that $\psi_{|U}$ agrees with the projection onto $\p K$. Then for any $\hat{A}$-admissible pair $(E;\rho)$ with support $K$, $(\psi^*(E);\psi^*(\rho))$ is trivial outside $K':=K\setminus U$. By homotopy invariance of Chern character and odd Chern character, we know $(\psi^*(E);\psi^*(\rho))$ is still an $\hat{A}$-admissible pair (with support $K'$). Since $\psi$ has a bounded Lipschitz constant, pulling back by $\psi$ will not change the property of having infinite $\hat{A}$-cowaist. Therefore $\hAcw(M|K')=\infty$.
\end{proof}

\subsection{Codimension zero scalar-mean curvature estimates}\label{SS:scal-mean est codim 0}

It is natural to convert the previous quantitative estimates to compact manifolds with boundary. In this case they become quantitative width estimates.

The following is a scalar-mean curvature comparison theorem for the general long neck problem.

\begin{theorem}\label{T:scal-mean est codim 0}
Let $(M,g)$ be an $n$-dimensional compact Riemannian spin manifold with boundary. Let $K\Subset M^\circ$ be a compact subset. Suppose that
\begin{enumerate}
\item $\scal_g> \frac{2n(n-1)}{\hAcw(M|K)}$ on $K$;
\item $\scal_g\ge\sigma^2n(n-1)$ on $M\setminus K$ for some $\sigma>0$;
\item the mean curvature $H_g\ge-\sigma\tan(\frac{1}{2}\sigma nl)$ for some $l\in(0,\frac{\pi}{\sigma n})$.
\end{enumerate}
Then
\[
\dist_g(K,\pM)<l.
\]
\end{theorem}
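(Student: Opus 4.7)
The plan is to adapt the argument of Theorem~\ref{T:quant est codim 0} to a compact manifold with boundary, replacing the role of the codimension zero formula on non-compact $M$ by Corollary~\ref{IntroC:codim 0 spf}, and balancing the boundary integral in the Weitzenb\"ock estimate against the mean curvature of $\pM$. Arguing by contradiction, suppose $\dist_g(K,\pM)\ge l$. By hypothesis (i) and the definition of $\hAcw(M|K)$, there is an $\hat{A}$-admissible pair $(E;\rho)$ with support $K$ satisfying
\[
\|R^E\|+\|R^\rho\|<\frac{1}{2n(n-1)}\inf_{p\in K}\scal_g(p).
\]
Let $E_0$ denote the trivial flat bundle isomorphic to $E$ on $M\setminus K$, so that $(E,E_0)$ is a Gromov--Lawson pair with support $K$; form $S=(\slaS\otimes E)\oplus(\slaS\otimes E_0)$ and the associated Dirac operator $D$ (for $n$ even one argues, as in the last paragraph of the proof of Theorem~\ref{T:quant est codim 0}, using an index instead of a spectral flow). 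Define $x(p):=\min\{\dist_g(K,p),l\}$ and the Lipschitz Callias potential $\Phi:=h(x)$ with $h(t):=\tfrac{1}{2}\sigma n\tan(\tfrac{1}{2}\sigma nt)$, so that $\Phi\equiv 0$ on $K$ and, by assumption on $\dist_g(K,\pM)$, $\Phi|_{\pM}\equiv\tfrac{1}{2}\sigma n\tan(\tfrac{1}{2}\sigma nl)$.

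Impose the boundary condition $\bfs=\id$. Corollary~\ref{IntroC:codim 0 spf} together with the cohomological formula of \cite[Theorem~2.8]{Getzler93} on $\double M$ gives
\[
\spf(\calD_{\Phi,\id},\rho\oplus\rho)=\spf(D_{V(E,E_0)},\tilde{\rho})=\int_M\hat{A}(M)(\ch(E)-\rk(E))\ch(\rho)\ne 0,
\]
since $\rho$ is locally constant near $\pM$ and since $(E;\rho)$ is $\hat{A}$-admissible. Simultaneously, apply the integrated Weitzenb\"ock--Penrose identity \eqref{E:Callias square int-2}: for any $r\in[0,1]$ and $u\in\dom(\calD_{\Phi,\id}(r))$,
\[
\int_M|\calD_\Phi(r)u|^2\D V_M\ge\int_M\langle u,\Theta(r)u\rangle\D V_M+\int_\pM\left\langle u,\left(\bfs\Phi+\tfrac{n}{2}H_g\right)u\right\rangle\D V_\pM,
\]
where $\Theta(r):=\tfrac{n}{n-1}\calR(r)+\Phi^2-|\D\Phi|$.

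The key step is to verify three positivity conditions. On $K$, one has $\Phi=|\D\Phi|=0$ and $\calR(r)\ge\tfrac{1}{4}\scal_g-\tfrac{n(n-1)}{2}(\|R^E\|+\|R^\rho\|)>0$ strictly, by the choice of $(E;\rho)$. On $M\setminus K$, the pair $(E;\rho)$ is trivial, so $\calR(r)=\tfrac{1}{4}\scal_g\ge\tfrac{1}{4}\sigma^2n(n-1)$ by (ii), while a direct computation with $h$ gives $\Phi^2-|\D\Phi|\ge-\tfrac{1}{4}\sigma^2n^2$ almost everywhere, and the two contributions exactly cancel to yield $\Theta(r)\ge 0$. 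On $\pM$, the value $\Phi|_\pM=\tfrac{1}{2}\sigma n\tan(\tfrac{1}{2}\sigma nl)$ is tuned so that $\bfs\Phi+\tfrac{n}{2}H_g\ge 0$ by (iii). Any $u\in\ker\calD_{\Phi,\id}(r)$ must therefore vanish on $K$ (where $\Theta(r)$ is strictly positive), and unique continuation for the Dirac-type operator $\calD_\Phi(r)$ forces $u\equiv 0$; hence every $\calD_{\Phi,\id}(r)$ is invertible and $\spf(\calD_{\Phi,\id},\rho\oplus\rho)=0$, contradicting the previous display.

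The principal obstacle is making these three estimates hold with the \emph{tight} constants given in the hypotheses; in particular the matching at $\pM$ requires using \eqref{E:Callias square int-2} with the Penrose factor $\tfrac{n}{n-1}$ in the bulk (and the corresponding $\tfrac{n}{2}H$ on the boundary), for the bulk term on $M\setminus K$ would otherwise fail to be non-negative. This tight cancellation is precisely what encodes the optimal value $l<\pi/(\sigma n)$ and the precise form of the mean curvature lower bound.
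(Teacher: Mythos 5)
Your proposal is correct and follows essentially the same route as the paper, which itself proves this theorem by repeating the argument of Theorem~\ref{T:quant est codim 0} on a compact manifold with boundary: choose an $\hat A$-admissible pair $(E;\rho)$ with support $K$ satisfying \eqref{E:curv E+rho bound}, form the Callias operator with boundary condition $\bfs=\id$, show $\spf(\calD_{\Phi,\id},\rho\oplus\rho)\ne0$ via Corollary~\ref{IntroC:codim 0 spf} and Getzler's cohomological formula, and kill it by the spectral estimate \eqref{E:Callias square int-2} with the boundary term controlled by the mean curvature hypothesis. The only cosmetic deviations are that you build $\Phi$ from $\min\{\dist_g(K,\cdot),l\}$ rather than from the distance to $\pM$ (both give the same boundary value and the same bound $\Phi^2-|\D\Phi|\ge-\tfrac14\sigma^2n^2$ under the contradiction hypothesis), and that you conclude via unique continuation where the paper invokes the equality-case discussion of \cite[Remark~4.5]{CeccZeid24GT}; both are fine.
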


\begin{proof}
The proof is essentially the same as that of Theorem~\ref{T:quant est codim 0}. See also \cite[Section~4]{Shi24LN} for details. Roughly speaking, find an $\hat{A}$-admissible pair $(E;\rho)$ with support $K$ satisfying \eqref{E:curv E+rho bound}. Consider directly the spectral flow $\spf(\calD_{\Phi,\id},\rho\oplus\rho)$, where the potential $\Phi$ is now defined from the distance function to $\pM$. On one hand, it is non-zero by the admissibility of $(E;\rho)$. On the other hand, one uses spectral estimate (on manifolds with boundary) to show that it must be zero. This finishes the proof.
\end{proof}

The following is a corresponding version to Corollary~\ref{C:quan est codim 0-contr} obtained in \cite{CeccZeid24GT,Shi24LN}.

\begin{corollary}\label{C:scal-mean est codim 0-contr}
Let $(M,g)$ be an $n$-dimensional compact Riemannian spin manifold with boundary. Let $f:M\to\rmS^n(1)$ be a smooth $\varepsilon$-area-contracting map of non-zero degree which is locally constant near the boundary. Suppose that
\begin{enumerate}
\item $\scal_g\ge\varepsilon n(n-1)$ on $\supp(\D f)$;
\item $\scal_g\ge\sigma^2n(n-1)$ on $M\setminus\supp(\D f)$ for some $\sigma>0$;
\item $H_g\ge-\sigma\tan(\frac{1}{2}\sigma nl)$ for some $l\in(0,\frac{\pi}{\sigma n})$.
\end{enumerate}
Then
\[
\dist_g(\supp(\D f),\pM)<l.
\]
\end{corollary}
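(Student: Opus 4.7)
The plan is to mimic the proof of Theorem~\ref{T:scal-mean est codim 0}, but with a concretely constructed admissible pair that renders the non-strict hypothesis $\scal_g\ge\varepsilon n(n-1)$ on $\supp(\D f)$ harmless, exactly as in the passage from Theorem~\ref{T:quant est codim 0} to Corollary~\ref{C:quan est codim 0-contr}. Set $K:=\supp(\D f)$ and argue by contradiction, assuming $\dist_g(K,\pM)\ge l$. By Lemma~\ref{L:contracting map K-cw} and Remark~\ref{R:contracting map K-cw}, $\hAcw(M|K)\ge 2/\varepsilon$, and the infimum is realized by the explicit pair $(E;\rho)$ with $\rho=f^*(\bar{\rho})$ and $E$ a trivially flat bundle when $n$ is odd (with $E=f^*(\slaS_{2m,+})$ playing the analogous role when $n$ is even, via the single Callias operator of \cite[Example~2.5]{CeccZeid24GT}). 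Both $\rho$ and $E$ are locally constant outside $K$, and the $\varepsilon$-area-contraction gives $\|R^E\|+\|R^\rho\|\le\varepsilon/2$ pointwise on $M$, with the bound \emph{strictly smaller} on a non-empty open subset of $K$ since the area-contraction constant of $f$ decays to zero as one approaches $\p K$.

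Let $E_0$ be the trivial flat bundle isomorphic to $E$ outside $K$, forming a Gromov--Lawson pair $(E,E_0)$ with support $K$, and consider the relative Dirac bundle $S=(\slaS\otimes E)\oplus(\slaS\otimes E_0)$. Define the Lipschitz Callias potential $\Phi:=h(x)$ on $M$, where $x(p):=\min\{\dist_g(K,p),l\}$ and $h(t):=\tfrac{1}{2}\sigma n\tan(\tfrac{1}{2}\sigma n t)$; the ODE $h'=h^2+\tfrac{1}{4}\sigma^2n^2$ yields $\Phi^2-|\D\Phi|\ge-\tfrac{1}{4}\sigma^2 n^2$ almost everywhere, and by the standing contradiction hypothesis $x\equiv l$ near $\pM$. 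Impose the boundary condition $\bfs=\id$ and consider the family $\calD_\Phi(r)=(1-r)\calD_\Phi+r(\rho\oplus\rho)^{-1}\calD_\Phi(\rho\oplus\rho)$. By Corollary~\ref{IntroC:codim 0 spf} and the cohomological spectral flow formula \cite[Theorem~2.8]{Getzler93},
\[
\spf(\calD_{\Phi,\id},\rho\oplus\rho)=\int_M\hat{A}(M)\,(\ch(E)-\rk(E))\wedge\ch(\rho)=\deg(f)\int_{\rmS^n(1)}\hat{A}\cdot\omega\ne 0,
\]
where the last integrand is the universal non-vanishing $\hat{A}$--odd Chern-character integrand on the sphere from Remark~\ref{R:contracting map K-cw}.

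To reach the contradiction, apply the integrated Weitzenb\"ock identity \eqref{E:Callias square int-2} to $u\in\dom(\calD_{\Phi,\id}(r))$: the bulk integrand is controlled below by $\tfrac{n}{n-1}\calR(r)+\Phi^2-|\D\Phi|$, where $\calR(r)\ge\tfrac{1}{4}\scal_g-\tfrac{n(n-1)}{2}(\|R^E\|+\|R^\rho\|)$. Hypothesis (i) gives $\calR(r)\ge 0$ on $K$ (and strictly positive on the open subset where the area-contraction is sub-maximal); on $M\setminus K$ hypothesis (ii) matches $\Phi^2-|\D\Phi|\ge-\tfrac{1}{4}\sigma^2n^2$ exactly, because $\tfrac{n}{n-1}\cdot\tfrac{1}{4}\sigma^2n(n-1)=\tfrac{1}{4}\sigma^2n^2$. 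The boundary integrand $\bfs\Phi+\tfrac{n}{2}H_g$ evaluates to $\tfrac{1}{2}\sigma n\tan(\tfrac{1}{2}\sigma nl)+\tfrac{n}{2}H_g$, which is non-negative by hypothesis (iii). Hence $\int_M|\calD_\Phi(r)u|^2\,\D V_M\ge 0$ with equality forcing $u$ to vanish on the open subset of $K$ where the bundle curvature bound is strict; the unique continuation argument in \cite[Remark~4.5]{CeccZeid24GT} then implies $u\equiv 0$, so each $\calD_{\Phi,\id}(r)$ is invertible and $\spf(\calD_{\Phi,\id},\rho\oplus\rho)=0$, contradicting the non-vanishing above. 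The main obstacle is the simultaneous management of the non-strict curvature hypotheses on $K$ and on $M\setminus K$ together with the boundary term: the delicate point is that hypothesis (i) is only an equality at worst, which is why the observation that $f$ is strictly area-contracting on some open subset of $K$ (cf.~the discussion following Corollary~\ref{C:quan est codim 0-contr}) is essential for the spectral-estimate step to rule out non-trivial kernels.
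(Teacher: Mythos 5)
You follow the paper's own route: the corollary is obtained by running the argument of Theorem~\ref{T:scal-mean est codim 0} (the boundary version of Theorem~\ref{T:quant est codim 0}) with the explicit pair of Lemma~\ref{L:contracting map K-cw} and Remark~\ref{R:contracting map K-cw}, the Lipschitz potential $\Phi=h(x)$ with $h(t)=\tfrac12\sigma n\tan(\tfrac12\sigma nt)$, the boundary term absorbed by hypothesis (iii) since $\Phi_{|\pM}=\tfrac12\sigma n\tan(\tfrac12\sigma nl)$ under the contradiction hypothesis, and the non-strict hypothesis (i) neutralized by the decay of the area-contraction constant of $f$ near $\p K$, so that $\Theta(r)>0$ on an open subset of $K$ and the equality/unique-continuation discussion of \cite[Remark~4.5]{CeccZeid24GT} forces invertibility of the whole family. (Building $\Phi$ from the distance to $K$ as in Theorem~\ref{T:quant est codim 0}, rather than from the distance to $\pM$ as in the paper's sketch of Theorem~\ref{T:scal-mean est codim 0}, is an immaterial variation, and only $\Phi_{|\pM}=h(l)$ is used, so the imprecise claim ``$x\equiv l$ near $\pM$'' is harmless.)

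One displayed step needs correcting. For $n$ odd you take $E$ trivially flat and at the same time identify the spectral flow with $\int_M\hat{A}(M)\,(\ch(E)-\rk(E))\wedge\ch(\rho)$, asserting this equals $\deg(f)$ times a non-zero sphere integral. With a flat connection $\ch(E)-\rk(E)$ vanishes identically as a form, so the integral you wrote is $0$, and your non-vanishing claim is self-contradictory as stated. The non-vanishing that actually drives this corollary in the paper is the unreduced pairing $\int_M\hat{A}(M)\ch(\rho)=\int_M\ch(\rho)=\deg(f)\ne0$ of Remark~\ref{R:contracting map K-cw} (from \cite{LiSuWang24}): it is the rank (degree-zero) part of $\ch(E)$, not its reduced part, that pairs with $\ch(\rho)$ here. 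You should therefore quote the non-vanishing in that form and make sure the spectral-flow identification you invoke (via Corollary~\ref{IntroC:codim 0 spf} for your particular family built from the Gromov--Lawson pair $(E,E_0)$ and $\rho\oplus\rho$) is the one matching that pairing, exactly as the paper does in its passage from Theorem~\ref{T:quant est codim 0} to Corollary~\ref{C:quan est codim 0-contr}; as written, your key displayed equality is false and the contradiction argument has nothing non-zero to contradict. Apart from this normalization slip the handling of the potential, the boundary term, the even-dimensional case, and the equality case matches the intended proof.
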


The above width estimates are neck length estimates, which is about how long the distance between the boundary and a fixed compact subset can be under certain scalar and mean curvature conditions. The next width estimate, which corresponds to Corollary~\ref{C:quan est codim 0-inf}, is about how wide a geodesic collar neighborhood of the boundary can be. It generalizes \cite[Theorem~1.7]{CeccZeid24GT} to odd dimensions.

\begin{theorem}\label{T:scal-mean est codim 0-inf}
Let $(M,g)$ be an $n$-dimensional ($n\ge3$ odd) compact Riemannian spin manifold with boundary such that $\Kcw(M)=\infty$. Suppose $\scal_g>0$ and that there exist positive constants $\sigma$ and $l$ with $0<l<\frac{\pi}{\sigma n}$, such that
\[
H_g\ge-\sigma\tan\Big(\frac{1}{2}\sigma nl\Big).
\] 
Then $\pM$ admits no open geodesic collar neighborhood $U\subset M$ of width strictly greater than $l$ such that $\scal_g\ge\sigma^2n(n-1)$ on $U$.
\end{theorem}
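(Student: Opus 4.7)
The plan is to argue by contradiction and reduce to the existing neck-length estimate, Theorem~\ref{T:scal-mean est codim 0}. Suppose that $\pM$ does admit an open geodesic collar neighborhood $U$ of some width $w>l$ on which $\scal_g\ge\sigma^2 n(n-1)$. I would pick an intermediate $l<l_0<w$, denote by $U_0\subset U$ the open geodesic collar of $\pM$ of width $l_0$, and set $K:=M\setminus U_0$. Then $K\Subset M^\circ$ is a compact subset (with smooth inner boundary) and $\dist_g(K,\pM)=l_0>l$. The goal is to check the hypotheses of Theorem~\ref{T:scal-mean est codim 0} for this $K$: its conclusion $\dist_g(K,\pM)<l$ then contradicts $l_0>l$. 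The scalar and mean-curvature hypotheses are immediate from what is given: $\scal_g>0$ on $K$, $\scal_g\ge\sigma^2 n(n-1)$ on $M\setminus K=U_0\subset U$, and $H_g\ge-\sigma\tan(\tfrac{1}{2}\sigma nl)$. So everything comes down to verifying $\hAcw(M|K)=\infty$, which makes the remaining hypothesis $\scal_g>2n(n-1)/\hAcw(M|K)$ on $K$ degenerate to the already known $\scal_g>0$.

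To show $\hAcw(M|K)=\infty$ I would adapt the pullback trick used in the proof of Corollary~\ref{C:quan est codim 0-inf}. The starting point is that $\Kcw(M)=\infty$ forces $\hAcw(M)=\infty$ by Theorem~\ref{T:K-cw omega-cw odd}. Given any $\hat A$-admissible pair $(E;\rho)$ over $M$, the pair is locally constant in some neighborhood of $\pM$, so to move its support into $K$ I will choose a smooth map $\psi:M\to M$ which is smoothly homotopic to the identity, equals the identity outside a slightly enlarged neighborhood of $U_0$, and whose restriction to $U_0$ is the normal projection onto $\pM$; in the normal collar coordinates $(y,t)\in\pM\times[0,l_0)$ this is literally $(y,t)\mapsto(y,0)$, cut off to the identity on a thin shell beyond $t=l_0$. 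Then $(\psi^*E;\psi^*\rho)$ is trivial on $U_0$, hence has support in $K$, while by homotopy invariance of the closed form $\hat A(M)\wedge(\ch(E)-\rk(E))\wedge\ch(\rho)$ the non-vanishing odd $\hat A$-Chern number is preserved, so $(\psi^*E;\psi^*\rho)$ is again $\hat A$-admissible. Because $\psi$ has Lipschitz constant bounded by a constant $L$ depending only on $l_0$ and the local geometry of the collar, the pullback satisfies $\|R^{\psi^*E}\|+\|R^{\psi^*\rho}\|\le L^2(\|R^E\|+\|R^\rho\|)$; letting $(E;\rho)$ vary and using $\hAcw(M)=\infty$, I conclude $\hAcw(M|K)=\infty$.

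The main obstacle, in my view, is the technical verification that the pullback construction really yields an admissible pair with support in $K$ and with the expected curvature control, and in particular that homotopy invariance of the pairing with $\hat A(M)$ holds on a manifold with boundary. The key is that $\ch(\rho)$ vanishes identically in a neighborhood of $\pM$ (since $\rho$ is locally constant there and $\ch_0(\rho,\D)=0$ by the stated convention), so the relevant top-degree form is compactly supported in the interior of $M$ and the usual Chern--Weil homotopy argument applies verbatim. Once this is settled, Theorem~\ref{T:scal-mean est codim 0} delivers $\dist_g(K,\pM)<l$, contradicting $\dist_g(K,\pM)=l_0>l$ and completing the proof.
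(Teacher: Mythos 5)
Your reduction is correct, but it is organized differently from the paper. The paper does not invoke Theorem~\ref{T:scal-mean est codim 0}; instead it argues directly: it fixes $\Lambda\in(l,l')$, sets $K_\Lambda=M\setminus U_\Lambda$, gets $\hAcw(M|K_\Lambda)=\infty$ by exactly the support-shrinking pullback trick you describe (citing the proof of Corollary~\ref{C:quan est codim 0-inf}), and then runs the Callias spectral-flow estimate from scratch with the explicit potential $\Phi=h_\delta(x)$, $h_\delta(t)=\delta\tan(\delta t)$, $x=\dist_g(K_\Lambda,\cdot)$, choosing $\delta$ close to $\tfrac12\sigma n$ so that $\Phi_{|\pM}>\tfrac12\sigma n\tan(\tfrac12\sigma nl)$ makes the boundary term in \eqref{E:Callias square int-2} non-negative while $\Phi^2-|\D\Phi|\ge-\delta^2>-\tfrac14\sigma^2n^2$ is absorbed by $\scal_g\ge\sigma^2n(n-1)$ on the collar. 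Your version replaces step (b) by quoting Theorem~\ref{T:scal-mean est codim 0} applied to $K=M\setminus U_0$ with $l_0\in(l,w)$: since $\hAcw(M|K)=\infty$ its hypothesis (i) degenerates to $\scal_g>0$, (ii) and (iii) are immediate, and the conclusion $\dist_g(K,\pM)<l$ contradicts $\dist_g(K,\pM)\ge l_0>l$. This is a legitimate and cleaner modularization (the analytic content of Theorem~\ref{T:scal-mean est codim 0}, whose potential is built from the distance to $\pM$, is precisely what the paper re-runs with $h_\delta$), at the cost of leaning on a theorem whose proof the paper itself only sketches; the paper's direct route makes the choice of potential, and hence the sharpness of the constant $\tan(\tfrac12\sigma nl)$, explicit.

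Two small points to tighten. First, in the homotopy-invariance step the decisive fact is not merely that the integrand is compactly supported in $M^\circ$, but that the homotopy $\psi_s$ maps a neighborhood of $\pM$ into the region where $(E;\rho)$ is trivial, so the Chern--Simons transgression form of $(\ch(E)-\rk(E))\wedge\ch(\rho)$ vanishes near $\pM$ and Stokes produces no boundary term when paired with the closed form $\hat A(M)$; with your collar homotopy $\psi_s(y,t)=(y,(1-s)t+s\chi(t))$ this is automatic, and it is the same level of detail as in Corollary~\ref{C:quan est codim 0-inf}. Second, you should note (one line) that $K=M\setminus U_0\neq\emptyset$, which holds because a geodesic collar of width $w$ is diffeomorphic to $\pM\times[0,w)$ and hence cannot exhaust the compact manifold $M$; otherwise the hypothesis $\hAcw(M|K)=\infty$ in Theorem~\ref{T:scal-mean est codim 0} would be vacuously false. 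Neither point is a gap, only missing justification.
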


\begin{proof}
We follow the idea of the proof of \cite[Theorem~1.7]{CeccZeid24GT}.
Let $U_d$ denote the open geodesic collar neighborhood of $\pM$ of width $d$ for $d>0$ small. Suppose, by contradiction, that for some $l<l'<\frac{\pi}{\sigma n}$, there exists $U_{l'}$ such that $\scal_g\ge\sigma^2n(n-1)$ on $U_{l'}$.

Fix a $\Lambda\in(l,l')$ and set $K_\Lambda:=M\setminus U_\Lambda$. From the proof of Corollary~\ref{C:quan est codim 0-inf}, $\hAcw(M|K_\Lambda)=\infty$. So we can choose an $\hat{A}$-admissible pair $(E;\rho)$ with support $K_\Lambda$ such that $\|R^E\|+\|R^\rho\|$ is small enough. Again we have a Callias operator $\calD_\Phi$ associated to a Gromov--Lawson pair as in Subsection~\ref{SS:codim 0}, such that $\spf(\calD_{\Phi,\id},\rho\oplus\rho)\ne0$. The Callias potential $\Phi$ is chosen as follows.

For $\delta\in(0,\frac{1}{2}\sigma n)$, consider the function $h_\delta(t):=\delta\tan(\delta t)$ with $t\in[0,\frac{\pi}{\sigma n})$. Set $\Phi:=h_\delta(x)$, where $x:M\to[0,\Lambda]$ is the distance function from $K_\Lambda$. Then $\Phi_{|K_\Lambda}=0$. By choosing $\delta$ close enough to $\frac{1}{2}\sigma n$, we can make
\[
\Phi_{|\pM}=h_\delta(\Lambda)>\frac{1}{2}\sigma n\tan\Big(\frac{1}{2}\sigma nl\Big).
\]

For $r\in[0,1]$, $u\in\dom(\calD_{\Phi,\id}(r))$, in the estimate \eqref{E:Callias square int-2} for $\int_M|\calD_\Phi(r)u|\D V_M$, our choice guarantees that the boundary term
\[
\int_\pM\Big(\frac{1}{2}nH_g+\bfs\Phi\Big)|u|^2\D V_\pM\ge0.
\]
So
\[
\int_M|\calD_\Phi(r)u|\D V_M\ge\int_M\langle u,\Theta(r)u\rangle\D V_M,
\]
with the self-adjoint bundle endomorphism $\Theta(r)$ satisfies
\[
\Theta(r)\ge\frac{n}{n-1}\Big(\frac{1}{4}\scal_g-\frac{1}{2}n(n-1)(\|R^E\|+\|R^\rho\|)\Big)+\Phi^2-|\D\Phi|.
\]
On $K_\Lambda$, since $\scal_g$ has a positive lower bound and $\|R^E\|+\|R^\rho\|$ can be small enough,
\[
\Theta(r)\ge\frac{n}{n-1}\Big(\frac{1}{4}\scal_g-\frac{1}{2}n(n-1)(\|R^E\|+\|R^\rho\|)\Big)>0.
\]
On $M\setminus K_\Lambda$, since $\scal_g\ge\sigma^2n(n-1)$ and $\Phi^2-|\D\Phi|\ge h_\delta^2(t)-|h_\delta'(t)|=-\delta^2>-\frac{1}{4}\sigma^2n^2$,
\[
\Theta(r)\ge\frac{n}{n-1}\cdot\frac{1}{4}\scal_g+\Phi^2-|\D\Phi|>0.
\]
Hence $\calD_{\Phi,\id}(r)$ is invertible for any $r\in[0,1]$, contradicting $\spf(\calD_{\Phi,\id},\rho\oplus\rho)\ne0$.
\end{proof}

\subsection{Codimension one scalar-mean curvature estimate}\label{SS:scal-mean est codim 1}

In this subsection, we prove the scalar-mean curvature estimate for the band width problem stated in Theorem~\ref{IntroT:scal-mean est codim 1}.

\begin{proof}[Proof of Theorem~\ref{IntroT:scal-mean est codim 1}]
Actually infinite $\hat{A}$-cowaist of $N$ suffices, which follows from Theorem~\ref{T:K-cw omega-cw odd}. Thus for any $\varepsilon>0$, there exists an $\hat{A}$-admissible pair $(E_N;\rho_N)$ over $N$ such that $\|R^{E_N}\|+\|R^{\rho_N}\|<\varepsilon$. Pulling back to $M$ (up to diffeomorphism), one gets a Hermitian vector bundle $E\to M$ and a smooth map $\rho:M\to U(l)$ such that $\|R^E\|+\|R^\rho\|<c\,\varepsilon$ for some constant $c>0$.

Let $\uS:=\slaS\otimes E$, where $\slaS\to M$ is the complex spinor bundle over $M$, and form the Dirac bundle $S=\uS\oplus\uS$ as in Subsection~\ref{SS:codim 1}. Consider the Callias operator $\calD_\Phi$ of \eqref{E:Callias even} with the boundary condition given by $\bfs=\pm\id$ on $\p_\pm M$. Then by Corollary~ \ref{C:codim 1 spf} and Remark~\ref{R:codim 1 spf}, for any Callias potential $\Phi$, we have
\[
\spf(\calD_{\Phi,\bfs},\rho)=2\spf(\slaD_{E_N},\rho_N)=2\int_N\hat{A}(N)\ch(E_N)\ch(\rho_N)\ne0,
\]
where $\slaD_{E_N}$ is the spin Dirac operator on $N$ twisted by $E_N$, and the inequality is due to the fact that $(E_N,\rho_N)$ is an $\hat{A}$-admissible pair over $N$.

The next step is routinely choosing a suitable Callias potential $\Phi$ so that $\spf(\calD_{\Phi,\bfs},\rho)$ vanishes. The selection of the Callias potential is essentially the same as in the proof of \cite[Theorem~7.6]{CeccZeid24GT} and the argument is the same as those in previous subsections, so we do not repeat here.
\end{proof}

\begin{remark}\label{R:scal-mean est codim 1}
Typical examples of closed manifolds with infinite $\hat{A}$-cowaist include finitely area enlargeable manifolds, their product with manifolds of non-zero $\hat{A}$-genus, and manifolds which admit a smooth map to a finitely area enlargeable manifold of non-zero $\hat{A}$-degree.

Like \cite[Section~7]{CeccZeid24GT}, Theorem~\ref{IntroT:scal-mean est codim 1} will hold for more general compact Riemannian band $(M,g)$ having \emph{infinite vertical $\hat{A}$-cowaist}, meaning that for any $\varepsilon>0$, there exist a Hermitian bundle $E\to M$ and a smooth map $\rho:M\to U(l)$ such that $\|R^E\|+\|R^\rho\|<\varepsilon$ and
\[
\int_{\p_+M}\hat{A}(\p_+M)\ch(E_{|\p_+M})\ch(\rho_{|\p_+M})\ne0.
\]
\end{remark}

The following are two consequences of Theorem~\ref{IntroT:scal-mean est codim 1}.

\begin{corollary}\label{C:mean curv <0}
Let $N$ be a closed $(n-1)$-dimensional ($n\ge2$ even) spin manifold of infinite $\hat{A}$-cowaist and let $M=N\times[-1,1]$. If there exists a Riemannian metric $g$ on $M$ such that $\scal_g>0$, then $\inf_{p\in\pM}(H_g(p))<0$.
\end{corollary}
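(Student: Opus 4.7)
The plan is to argue by contradiction, reducing the claim to a direct application of Theorem~\ref{IntroT:scal-mean est codim 1}. Suppose to the contrary that $H_g(p)\ge 0$ for every $p\in\pM$. Since $M=N\times[-1,1]$ is compact and $\scal_g>0$ everywhere, compactness yields a uniform bound $\scal_g\ge\sigma^2 n(n-1)$ for some $\sigma>0$. Also, $M$ is a trivial spin band with $\p_\pm M=N\times\{\pm1\}$, and $N$ has infinite $\hat A$-cowaist, hence infinite K-cowaist in the sense of Definition~\ref{D:K-cw compatible even} (here the even-dimensional K-cowaist suffices because $\dim N = n-1$ is odd? wait, $n\ge 2$ even so $n-1$ is odd). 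Since the hypothesis says $N$ has infinite $\hat A$-cowaist, Theorem~\ref{IntroT:scal-mean est codim 1} (which only requires infinite K-cowaist of $N$, but actually only infinite $\hat A$-cowaist is used, as Remark~\ref{R:scal-mean est codim 1} clarifies via the vertical $\hat A$-cowaist formulation) is applicable to $(M,g)$.

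The key observation is that under $H_g\ge 0$ one can choose the band-width parameters in Theorem~\ref{IntroT:scal-mean est codim 1} \emph{symmetrically around zero}. For any $\varepsilon\in\bigl(0,\tfrac{\pi}{\sigma n}\bigr)$, set $l_+:=\varepsilon$ and $l_-:=-\varepsilon$, so $-\tfrac{\pi}{\sigma n}<l_-<l_+<\tfrac{\pi}{\sigma n}$. Then
\[
\mp\tan\!\Big(\tfrac{1}{2}\sigma n l_\pm\Big)=-\tan\!\Big(\tfrac{1}{2}\sigma n\varepsilon\Big)\le 0,
\]
so the mean-curvature condition $H_{g|\p_\pm M}\ge\mp\tan(\tfrac{1}{2}\sigma n l_\pm)$ is automatic from the assumption $H_g\ge 0$. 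Theorem~\ref{IntroT:scal-mean est codim 1} therefore delivers
\[
\wid(M,g)\le l_+-l_-=2\varepsilon.
\]

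Since $\varepsilon\in(0,\tfrac{\pi}{\sigma n})$ was arbitrary, we conclude $\wid(M,g)=\dist_g(\p_+M,\p_-M)=0$. But $\p_\pm M=N\times\{\pm1\}$ are disjoint, non-empty, compact subsets of $M$, so for any Riemannian metric on $M$ their distance is strictly positive. This contradiction shows that $H_g$ must be somewhere negative, i.e.\ $\inf_{p\in\pM}H_g(p)<0$. The only non-routine ingredient here is the applicability of Theorem~\ref{IntroT:scal-mean est codim 1}, which has already been established; everything else is a short formal deduction, so I do not foresee a real obstacle.
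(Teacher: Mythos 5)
Your argument is correct and is essentially the paper's intended deduction: the corollary is stated there as an immediate consequence of Theorem~\ref{IntroT:scal-mean est codim 1}, obtained exactly by your contrapositive application with symmetric parameters $l_\pm=\pm\varepsilon$ and letting $\varepsilon\to0$, so that the width would vanish while $\p_+M$ and $\p_-M$ are disjoint compact sets. One caveat: your passing claim that infinite $\hat{A}$-cowaist of $N$ implies infinite K-cowaist is backwards (Theorem~\ref{T:K-cw omega-cw odd} gives $\Kcw(N)\le c\cdot\hAcw(N)$, so the implication goes the other way), but this is harmless since, as you correctly note, the proof of Theorem~\ref{IntroT:scal-mean est codim 1} only uses infinite $\hat{A}$-cowaist of $N$.
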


\begin{corollary}\label{C:band width}
Let $N$ be a closed $(n-1)$-dimensional ($n\ge2$ even) spin manifold of infinite $\hat{A}$-cowaist and let $M=N\times[-1,1]$. If there exists a Riemannian metric $g$ on $M$ such that $\scal_g\ge\sigma^2n(n-1)$ for some $\sigma>0$, then $\wid(M,g)<\frac{2\pi}{\sigma n}$.
\end{corollary}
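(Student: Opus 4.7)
The plan is to deduce the corollary from Theorem~\ref{IntroT:scal-mean est codim 1} by pushing the parameters $l_\pm$ close to the endpoints $\pm\frac{\pi}{\sigma n}$, where the mean curvature side-condition becomes vacuous. First I would note that $M=N\times[-1,1]$ is a compact Riemannian band with $\p_\pm M=N\times\{\pm 1\}$, so $H_g$ is continuous on the compact boundary $\pM$ and hence bounded below: there exists $C\ge 0$ such that $H_g\ge -C$ on $\pM$.

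Next, since $t\mapsto\tan(\tfrac{1}{2}\sigma n t)$ is strictly increasing on $(-\tfrac{\pi}{\sigma n},\tfrac{\pi}{\sigma n})$ with limits $\pm\infty$ at the endpoints, I can pick
\[
l_+=\tfrac{2}{\sigma n}\arctan(C)+\epsilon,\qquad l_-=-\tfrac{2}{\sigma n}\arctan(C)-\epsilon
\]
for $\epsilon>0$ small enough that $-\tfrac{\pi}{\sigma n}<l_-<0<l_+<\tfrac{\pi}{\sigma n}$ (this is possible because $\arctan(C)<\tfrac{\pi}{2}$). With this choice, $\tan(\tfrac{1}{2}\sigma n l_+)\ge C$ and $\tan(\tfrac{1}{2}\sigma n l_-)\le -C$, so both boundary inequalities
\[
H_{g|\p_+M}\ge -C\ge -\tan(\tfrac{1}{2}\sigma n l_+),\qquad H_{g|\p_-M}\ge -C\ge\tan(\tfrac{1}{2}\sigma n l_-)
\]
are satisfied automatically. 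By the hypothesis $\scal_g\ge\sigma^2 n(n-1)$ on all of $M$, the scalar curvature condition of Theorem~\ref{IntroT:scal-mean est codim 1} also holds; moreover, the opening line of the proof of that theorem reduces the K-cowaist requirement to the weaker requirement of infinite $\hat{A}$-cowaist of $N$ via Theorem~\ref{T:K-cw omega-cw odd}, which is exactly what we are given. Applying the theorem yields
\[
\wid(M,g)\le l_+-l_-<\tfrac{\pi}{\sigma n}-\bigl(-\tfrac{\pi}{\sigma n}\bigr)=\tfrac{2\pi}{\sigma n},
\]
which is the desired strict inequality.

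There is essentially no obstacle here; the argument is a parameter chase matching the lower mean curvature bound $C$ against the blow-up of $\tan(\tfrac{1}{2}\sigma n \cdot)$ at the endpoints. The one place that warrants care is the hypothesis matching: one must rely on the $\hat{A}$-cowaist version of Theorem~\ref{IntroT:scal-mean est codim 1} rather than the K-cowaist form stated in the introduction, but this is precisely the generalization already recorded at the start of that theorem's proof, so no additional work is needed.
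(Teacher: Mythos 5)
Your argument is correct and is exactly the intended derivation: the paper states Corollary~\ref{C:band width} as a direct consequence of Theorem~\ref{IntroT:scal-mean est codim 1}, obtained by choosing $l_\pm$ close enough to $\pm\frac{\pi}{\sigma n}$ that the blow-up of $\tan(\frac{1}{2}\sigma n\,\cdot)$ absorbs the (finite) lower bound on $H_g$ over the compact boundary, and by invoking the $\hat{A}$-cowaist strengthening recorded at the start of that theorem's proof. Your parameter choice and sign checks are all fine, so nothing further is needed.
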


Corollary~\ref{C:band width} further implies the following non-existence result about uniformly PSC metrics.

\begin{corollary}\label{C:non-exist NxR}
Let $N$ be a closed $(n-1)$-dimensional ($n\ge2$ even) spin manifold of infinite $\hat{A}$-cowaist. Then $N\times\RR$ does not admit a complete metric of uniformly positive scalar curvature.
\end{corollary}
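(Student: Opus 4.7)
The plan is to argue by contradiction, exhausting $N\times\RR$ by a sequence of compact spin bands of the form $N\times[-T,T]$ and playing off the uniform band-width bound from Corollary~\ref{C:band width} against the fact that, under a complete metric, the two ends of $N\times\RR$ must be infinitely far apart.

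Assume, for contradiction, that $N\times\RR$ admits a complete Riemannian metric $g$ with $\scal_g\ge\sigma^2 n(n-1)$ for some $\sigma>0$ everywhere. For each $T>0$, consider the submanifold $M_T:=N\times[-T,T]$ endowed with the restricted metric $g_T:=g|_{M_T}$. Since $N\times[-T,T]$ is diffeomorphic to $N\times[-1,1]$, $(M_T,g_T)$ is a compact $n$-dimensional Riemannian spin band in the sense used throughout Section~\ref{S:odd K-cw psc}, with $\p_\pm M_T=N\times\{\pm T\}$. Because $g_T$ inherits the scalar-curvature lower bound from $g$, Corollary~\ref{C:band width} applies to $(M_T,g_T)$ and yields the uniform upper bound
\[
\wid(M_T,g_T)<\frac{2\pi}{\sigma n}\qquad\text{for every }T>0.
\]

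On the other hand, the intrinsic distance in $M_T$ dominates the ambient distance in $N\times\RR$ (since curves in $M_T$ are a subclass of curves in $N\times\RR$), so
\[
\wid(M_T,g_T)\;\ge\;\dist_g\bigl(N\times\{-T\},\,N\times\{T\}\bigr).
\]
I would then show that the right-hand side tends to $\infty$ as $T\to\infty$. This is the step I expect to be the only subtle point, but it follows from completeness together with compactness of $N$: the slice $N\times\{0\}$ is $g$-compact (closed in the complete space $N\times\RR$ and contained in a bounded set, as the continuous image of the compact manifold $N$), so if $\dist_g(N\times\{0\},N\times\{T_k\})$ stayed bounded along some sequence $T_k\to\infty$, one could choose points $p_k\in N\times\{T_k\}$ lying in a fixed $g$-bounded set; by Hopf--Rinow (applied to the complete manifold $(N\times\RR,g)$) a subsequence would converge in $N\times\RR$, contradicting $T_k\to\infty$ in the $\RR$-coordinate.

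Combining the two estimates gives $\dist_g(N\times\{-T\},N\times\{T\})<\tfrac{2\pi}{\sigma n}$ for every $T$, which contradicts the divergence just established. Hence no such complete uniformly PSC metric on $N\times\RR$ can exist. The heart of the argument is the band-width bound from Corollary~\ref{C:band width}, whose applicability is preserved because the infinite $\hat{A}$-cowaist hypothesis on $N$ passes verbatim to every slice $N\times\{t\}$ of every band $M_T$; the only genuinely geometric input beyond that is the Hopf--Rinow-type argument above.
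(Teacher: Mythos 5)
Your approach is exactly the intended one: the paper states Corollary~\ref{C:non-exist NxR} as an immediate consequence of Corollary~\ref{C:band width} without writing out the deduction, and your exhaustion of $N\times\RR$ by compact bands diffeomorphic to $N\times[-1,1]$, combined with the uniform bound $\wid<\frac{2\pi}{\sigma n}$ and a Hopf--Rinow argument, is precisely the standard way to fill it in.

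One small mismatch to repair: the divergence you actually prove is $\dist_g(N\times\{0\},N\times\{T_k\})\to\infty$, whereas the quantity you compare with the band width of $M_T$ is $\dist_g(N\times\{-T\},N\times\{T\})$; boundedness of the latter does not, by itself, anchor points of $N\times\{T_k\}$ near the compact slice $N\times\{0\}$, since the minimizing-type paths could a priori start far away on $N\times\{-T_k\}$. The gap closes in one line: any continuous path from $N\times\{-T\}$ to $N\times\{T\}$ must meet $N\times\{0\}$ (intermediate value theorem applied to the $\RR$-coordinate), so its length is at least $\dist_g(N\times\{0\},N\times\{T\})$, giving $\dist_g(N\times\{-T\},N\times\{T\})\ge\dist_g(N\times\{0\},N\times\{T\})$. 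Alternatively, and even more simply, apply Corollary~\ref{C:band width} to the band $N\times[0,T]$ (also diffeomorphic to $N\times[-1,1]$) to get $\dist_g(N\times\{0\},N\times\{T\})<\frac{2\pi}{\sigma n}$ directly, which contradicts the divergence you established without any crossing argument. With either patch the proof is complete.
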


\begin{remark}\label{R:band width & non-exist NxR}
It is shown in \cite{Gromov18metric} by Gromov that the width estimate of Corollary~\ref{C:band width} is optimal. Starting from \cite{Gromov18metric}, the band width problem in the background of positive scalar curvature has been studied extensively recently, for example, in \cite{Zeidler22,Cecchini20LN,Zeidler20,GuoXieYu23band,CeccZeid23,CeccZeid24GT,Rade23,KumarSen25}, etc.
In particular, Corollary~\ref{C:band width} holds when $N$ is a closed spin manifold of non-vanishing Rosenberg index.

For Corollary~\ref{C:non-exist NxR}, it has also been proved in \cite{Cecchini20C*} without the uniformity restriction, for $N$ being a closed spin manifold of non-vanishing Rosenberg index. This is related to a conjecture of Rosenberg and Stolz \cite[Conjecture~7.1]{RosenbergStolz94} which asserts that if $N$ is a closed manifold which does not admit a PSC metric, then $N\times\RR$ does not admit a complete PSC metric.

From the above perspective, it would be natural to ask whether a closed spin manifold with infinite K-cowaist has non-vanishing Rosenberg index. This is in the spirit of Schick's conjecture \cite[Conjecture~ 1.5]{Schick14ICM} which says that ``every obstruction to positive scalar curvature for manifolds of dimension $\ge5$ which is based on index theory of Dirac operators can be read off the Rosenberg index''. Note that the method of Hanke and Schick in \cite{HankeSchick06,HankeSchick07} may be helpful, where they proved that closed area enlargeable spin manifolds in the sense of \cite{GromovLawson83} have non-vanishing Rosenberg index.
\end{remark}


\bibliographystyle{amsplain}

\begin{bibdiv}
\begin{biblist}

\bib{Adams62}{article}{
      author={Adams, J.~F.},
       title={Vector fields on spheres},
        date={1962},
        ISSN={0003-486X},
     journal={Ann. of Math. (2)},
      volume={75},
       pages={603\ndash 632},
         url={https://doi.org/10.2307/1970213},
      review={\MR{139178}},
}

\bib{Anghel93Callias}{article}{
      author={Anghel, N.},
       title={On the index of {C}allias-type operators},
        date={1993},
        ISSN={1016-443X,1420-8970},
     journal={Geom. Funct. Anal.},
      volume={3},
      number={5},
       pages={431\ndash 438},
         url={https://doi.org/10.1007/BF01896237},
      review={\MR{1233861}},
}

\bib{Atiyah89K-book}{book}{
      author={Atiyah, M.~F.},
       title={{$K$}-theory},
     edition={Second},
      series={Advanced Book Classics},
   publisher={Addison-Wesley Publishing Company, Advanced Book Program, Redwood
  City, CA},
        date={1989},
        ISBN={0-201-09394-4},
        note={Notes by D. W. Anderson},
      review={\MR{1043170}},
}

\bib{BaerBallmann12}{incollection}{
      author={B\"{a}r, C.},
      author={Ballmann, W.},
       title={Boundary value problems for elliptic differential operators of
  first order},
        date={2012},
   booktitle={Surveys in differential geometry. {V}ol. {XVII}},
      series={Surv. Differ. Geom.},
      volume={17},
   publisher={Int. Press, Boston, MA},
       pages={1\ndash 78},
         url={https://doi.org/10.4310/SDG.2012.v17.n1.a1},
      review={\MR{3076058}},
}

\bib{BaerBallmann16}{incollection}{
      author={B\"{a}r, C.},
      author={Ballmann, W.},
       title={Guide to elliptic boundary value problems for {D}irac-type
  operators},
        date={2016},
   booktitle={Arbeitstagung {B}onn 2013},
      series={Progr. Math.},
      volume={319},
   publisher={Birkh\"{a}user/Springer, Cham},
       pages={43\ndash 80},
         url={https://doi.org/10.1007/978-3-319-43648-7_3},
      review={\MR{3618047}},
}

\bib{BaerHanke23}{incollection}{
      author={B\"{a}r, C.},
      author={Hanke, B.},
       title={Boundary conditions for scalar curvature},
        date={[2023] \copyright 2023},
   booktitle={Perspectives in scalar curvature. {V}ol. 2},
   publisher={World Sci. Publ., Hackensack, NJ},
       pages={325\ndash 377},
      review={\MR{4577919}},
}

\bib{BaerHanke24K-cw}{article}{
      author={B\"ar, C.},
      author={Hanke, B.},
       title={{$K$}-cowaist of manifolds with boundary},
        date={2024},
        ISSN={1631-073X,1778-3569},
     journal={C. R. Math. Acad. Sci. Paris},
      volume={362},
       pages={1349\ndash 1356},
      review={\MR{4824931}},
}

\bib{BoossLeschPhillips05}{article}{
      author={Booss-Bavnbek, B.},
      author={Lesch, M.},
      author={Phillips, J.},
       title={Unbounded {F}redholm operators and spectral flow},
        date={2005},
        ISSN={0008-414X,1496-4279},
     journal={Canad. J. Math.},
      volume={57},
      number={2},
       pages={225\ndash 250},
         url={https://doi.org/10.4153/CJM-2005-010-1},
      review={\MR{2124916}},
}

\bib{Bunke95}{article}{
      author={Bunke, U.},
       title={A {$K$}-theoretic relative index theorem and {C}allias-type
  {D}irac operators},
        date={1995},
        ISSN={0025-5831},
     journal={Math. Ann.},
      volume={303},
      number={2},
       pages={241\ndash 279},
         url={https://doi.org/10.1007/BF01460989},
      review={\MR{1348799}},
}

\bib{Callias78}{article}{
      author={Callias, C.},
       title={Axial anomalies and index theorems on open spaces},
        date={1978},
        ISSN={0010-3616,1432-0916},
     journal={Comm. Math. Phys.},
      volume={62},
      number={3},
       pages={213\ndash 234},
         url={http://projecteuclid.org/euclid.cmp/1103904395},
      review={\MR{507780}},
}

\bib{Cecchini20C*}{article}{
      author={Cecchini, S.},
       title={Callias-type operators in {$C^*$}-algebras and positive scalar
  curvature on noncompact manifolds},
        date={2020},
        ISSN={1793-5253,1793-7167},
     journal={J. Topol. Anal.},
      volume={12},
      number={4},
       pages={897\ndash 939},
         url={https://doi.org/10.1142/S1793525319500687},
      review={\MR{4146567}},
}

\bib{Cecchini20LN}{article}{
      author={Cecchini, S.},
       title={A long neck principle for {R}iemannian spin manifolds with
  positive scalar curvature},
        date={2020},
        ISSN={1016-443X},
     journal={Geom. Funct. Anal.},
      volume={30},
      number={5},
       pages={1183\ndash 1223},
         url={https://doi.org/10.1007/s00039-020-00545-1},
      review={\MR{4181824}},
}

\bib{CeccZeid23}{incollection}{
      author={Cecchini, S.},
      author={Zeidler, R.},
       title={Scalar curvature and generalized {C}allias operators},
        date={[2023] \copyright 2023},
   booktitle={Perspectives in scalar curvature. {V}ol. 1},
   publisher={World Sci. Publ., Hackensack, NJ},
       pages={515\ndash 542},
      review={\MR{4577904}},
}

\bib{CeccZeid24GT}{article}{
      author={Cecchini, S.},
      author={Zeidler, R.},
       title={Scalar and mean curvature comparison via the {D}irac operator},
        date={2024},
        ISSN={1465-3060},
     journal={Geom. Topol.},
      volume={28},
      number={3},
       pages={1167\ndash 1212},
         url={https://doi.org/10.2140/gt.2024.28.1167},
      review={\MR{4746412}},
}

\bib{Getzler93}{article}{
      author={Getzler, E.},
       title={The odd {C}hern character in cyclic homology and spectral flow},
        date={1993},
        ISSN={0040-9383},
     journal={Topology},
      volume={32},
      number={3},
       pages={489\ndash 507},
         url={https://doi.org/10.1016/0040-9383(93)90002-D},
      review={\MR{1231957}},
}

\bib{GorokhovskyLesch15}{article}{
      author={Gorokhovsky, A.},
      author={Lesch, M.},
       title={On the spectral flow for {D}irac operators with local boundary
  conditions},
        date={2015},
        ISSN={1073-7928,1687-0247},
     journal={Int. Math. Res. Not. IMRN},
      number={17},
       pages={8036\ndash 8051},
         url={https://doi.org/10.1093/imrn/rnu188},
      review={\MR{3404008}},
}

\bib{Gromov96}{incollection}{
      author={Gromov, M.},
       title={Positive curvature, macroscopic dimension, spectral gaps and
  higher signatures},
        date={1996},
   booktitle={Functional analysis on the eve of the 21st century, {V}ol.\ {II}
  ({N}ew {B}runswick, {NJ}, 1993)},
      series={Progr. Math.},
      volume={132},
   publisher={Birkh\"auser Boston, Boston, MA},
       pages={1\ndash 213},
         url={https://doi.org/10.1007/s10107-010-0354-x},
      review={\MR{1389019}},
}

\bib{Gromov17-101}{article}{
      author={Gromov, M.},
       title={101 questions, problems and conjectures around scalar curvature},
        date={2017},
  eprint={https://www.ihes.fr/~gromov/wp-content/uploads/2018/08/101-problemsOct1-2017.pdf},
  url={https://www.ihes.fr/~gromov/wp-content/uploads/2018/08/101-problemsOct1-2017.pdf},
}

\bib{Gromov18metric}{article}{
      author={Gromov, M.},
       title={Metric inequalities with scalar curvature},
        date={2018},
        ISSN={1016-443X},
     journal={Geom. Funct. Anal.},
      volume={28},
      number={3},
       pages={645\ndash 726},
         url={https://doi.org/10.1007/s00039-018-0453-z},
      review={\MR{3816521}},
}

\bib{Gromov23Four}{incollection}{
      author={Gromov, M.},
       title={Four lectures on scalar curvature},
        date={[2023] \copyright 2023},
   booktitle={Perspectives in scalar curvature. {V}ol. 1},
   publisher={World Sci. Publ., Hackensack, NJ},
       pages={1\ndash 514},
      review={\MR{4577903}},
}

\bib{GromovLawson83}{article}{
      author={Gromov, M.},
      author={Lawson, H.~B., Jr.},
       title={Positive scalar curvature and the {D}irac operator on complete
  {R}iemannian manifolds},
        date={1983},
        ISSN={0073-8301},
     journal={Inst. Hautes \'{E}tudes Sci. Publ. Math.},
      number={58},
       pages={83\ndash 196 (1984)},
         url={http://www.numdam.org/item?id=PMIHES_1983__58__83_0},
      review={\MR{720933}},
}

\bib{GuoXieYu23band}{incollection}{
      author={Guo, H.},
      author={Xie, Z.},
      author={Yu, G.},
       title={Quantitative {K}-theory, positive scalar curvature, and
  bandwidth},
        date={[2023] \copyright 2023},
   booktitle={Perspectives in scalar curvature. {V}ol. 2},
   publisher={World Sci. Publ., Hackensack, NJ},
       pages={763\ndash 798},
      review={\MR{4577930}},
}

\bib{HankeSchick06}{article}{
      author={Hanke, B.},
      author={Schick, T.},
       title={Enlargeability and index theory},
        date={2006},
        ISSN={0022-040X,1945-743X},
     journal={J. Differential Geom.},
      volume={74},
      number={2},
       pages={293\ndash 320},
         url={http://projecteuclid.org/euclid.jdg/1175266206},
      review={\MR{2259056}},
}

\bib{HankeSchick07}{article}{
      author={Hanke, B.},
      author={Schick, T.},
       title={Enlargeability and index theory: infinite covers},
        date={2007},
        ISSN={0920-3036,1573-0514},
     journal={$K$-Theory},
      volume={38},
      number={1},
       pages={23\ndash 33},
         url={https://doi.org/10.1007/s10977-007-9004-3},
      review={\MR{2353861}},
}

\bib{Karoubi78K-book}{book}{
      author={Karoubi, M.},
       title={{$K$}-theory},
      series={Grundlehren der Mathematischen Wissenschaften},
   publisher={Springer-Verlag, Berlin-New York},
        date={1978},
      volume={Band 226},
        ISBN={3-540-08090-2},
        note={An introduction},
      review={\MR{488029}},
}

\bib{KumarSen25}{article}{
      author={Kumar, A.},
      author={Sen, B.},
       title={Positive scalar curvature and exotic structures on simply
  connected four manifolds},
        date={2025},
      eprint={https://arxiv.org/abs/2501.01113},
         url={https://arxiv.org/abs/2501.01113},
}

\bib{LawMic89}{book}{
      author={Lawson, H.~B., Jr.},
      author={Michelsohn, M.-L.},
       title={Spin geometry},
      series={Princeton Mathematical Series},
   publisher={Princeton University Press, Princeton, NJ},
        date={1989},
      volume={38},
        ISBN={0-691-08542-0},
      review={\MR{1031992}},
}

\bib{Lesch05sf}{incollection}{
      author={Lesch, M.},
       title={The uniqueness of the spectral flow on spaces of unbounded
  self-adjoint {F}redholm operators},
        date={2005},
   booktitle={Spectral geometry of manifolds with boundary and decomposition of
  manifolds},
      series={Contemp. Math.},
      volume={366},
   publisher={Amer. Math. Soc., Providence, RI},
       pages={193\ndash 224},
         url={https://doi.org/10.1090/conm/366/06730},
      review={\MR{2114489}},
}

\bib{LiSuWang24}{article}{
      author={Li, Y.},
      author={Su, G.},
      author={Wang, X.},
       title={Spectral flow, {L}larull's rigidity theorem in odd dimensions and
  its generalization},
        date={2024},
        ISSN={1674-7283},
     journal={Sci. China Math.},
      volume={67},
      number={5},
       pages={1103\ndash 1114},
         url={https://doi.org/10.1007/s11425-023-2138-5},
      review={\MR{4739559}},
}

\bib{Min-Oo02}{article}{
      author={Min-Oo, M.},
       title={K-area, mass and asymptotic geometry},
        date={2002},
      eprint={https://www.math.mcmaster.ca/minoo/mypapers/crm_es.pdf},
         url={https://www.math.mcmaster.ca/minoo/mypapers/crm_es.pdf},
}

\bib{Rade23}{article}{
      author={R\"ade, D.},
       title={Scalar and mean curvature comparison via {$\mu $}-bubbles},
        date={2023},
        ISSN={0944-2669,1432-0835},
     journal={Calc. Var. Partial Differential Equations},
      volume={62},
      number={7},
       pages={Paper No. 187, 39},
         url={https://doi.org/10.1007/s00526-023-02520-8},
      review={\MR{4612761}},
}

\bib{RosenbergStolz94}{incollection}{
      author={Rosenberg, J.},
      author={Stolz, S.},
       title={Manifolds of positive scalar curvature},
        date={1994},
   booktitle={Algebraic topology and its applications},
      series={Math. Sci. Res. Inst. Publ.},
      volume={27},
   publisher={Springer, New York},
       pages={241\ndash 267},
         url={https://doi.org/10.1007/978-1-4613-9526-3_8},
      review={\MR{1268192}},
}

\bib{Schick14ICM}{inproceedings}{
      author={Schick, T.},
       title={The topology of positive scalar curvature},
        date={2014},
   booktitle={Proceedings of the {I}nternational {C}ongress of
  {M}athematicians---{S}eoul 2014. {V}ol. {II}},
   publisher={Kyung Moon Sa, Seoul},
       pages={1285\ndash 1307},
      review={\MR{3728662}},
}

\bib{Shi24LN}{article}{
      author={Shi, P.},
       title={The odd-dimensional long neck problem via spectral flow},
        date={2024},
      eprint={https://arxiv.org/abs/2410.09809},
         url={https://arxiv.org/abs/2410.09809},
}

\bib{Dungen24}{article}{
      author={van~den Dungen, K.},
       title={{D}irac-{S}chr{\"o}dinger operators, index theory, and spectral
  flow},
        date={2024},
      eprint={https://arxiv.org/abs/2407.02993},
         url={https://arxiv.org/abs/2407.02993},
}

\bib{Dungen25}{article}{
      author={van~den Dungen, K.},
       title={Generalised {D}irac-{S}chr\"odinger operators and the {C}allias
  {T}heorem},
        date={2025},
        ISSN={2050-5094},
     journal={Forum Math. Sigma},
      volume={13},
       pages={Paper No. e11},
         url={https://doi.org/10.1017/fms.2024.157},
      review={\MR{4854428}},
}

\bib{Wang23K-cw}{article}{
      author={Wang, X.},
       title={On a relation between the {K}-cowaist and the
  {$\hat{A}$}-cowaist},
        date={2023},
        ISSN={0002-9939,1088-6826},
     journal={Proc. Amer. Math. Soc.},
      volume={151},
      number={11},
       pages={4983\ndash 4990},
         url={https://doi.org/10.1090/proc/16526},
      review={\MR{4634899}},
}

\bib{Zeidler20}{article}{
      author={Zeidler, R.},
       title={Width, largeness and index theory},
        date={2020},
     journal={SIGMA Symmetry Integrability Geom. Methods Appl.},
      volume={16},
       pages={Paper No. 127, 15},
         url={https://doi.org/10.3842/SIGMA.2020.127},
      review={\MR{4181525}},
}

\bib{Zeidler22}{article}{
      author={Zeidler, R.},
       title={Band width estimates via the {D}irac operator},
        date={2022},
        ISSN={0022-040X},
     journal={J. Differential Geom.},
      volume={122},
      number={1},
       pages={155\ndash 183},
         url={https://doi.org/10.4310/jdg/1668186790},
      review={\MR{4507473}},
}

\bib{Zhang01book}{book}{
      author={Zhang, W.},
       title={Lectures on {C}hern-{W}eil theory and {W}itten deformations},
      series={Nankai Tracts in Mathematics},
   publisher={World Scientific Publishing Co., Inc., River Edge, NJ},
        date={2001},
      volume={4},
        ISBN={981-02-4686-2},
         url={https://doi.org/10.1142/9789812386588},
      review={\MR{1864735}},
}

\end{biblist}
\end{bibdiv}

\end{document}